\documentclass[psamsfonts,reqno,12pt]{amsart}

\usepackage{amssymb,amsfonts}
\usepackage[all,arc]{xy}
\usepackage{enumerate}
\usepackage{mathrsfs}
\usepackage{hyperref}
\usepackage{mathtools}
\usepackage{nicefrac}
\usepackage[margin=1.0in]{geometry} 
\usepackage{mathtools}
\usepackage{xcolor}

\newtheorem{thm}{Theorem}[section]
\newtheorem{cor}[thm]{Corollary}
\newtheorem{prop}[thm]{Proposition}
\newtheorem{lem}[thm]{Lemma}


\newenvironment{customlem}[1]
{\innercustomlem}
  {\endinnercustomlem}

\theoremstyle{definition}
\newtheorem{defn}[thm]{Definition}

\newtheorem{hyp}[thm]{Hypothesis}

\theoremstyle{remark}
\newtheorem{rem}[thm]{Remark}

\numberwithin{equation}{section}

\newcommand{\R}{\mathbb{R}}

\newcommand{\eps}{\varepsilon}

\newcommand{\inj}{\xhookrightarrow{{\kern 1em}}}

\usepackage[normalem]{ulem}

\title{Approximate control of the marked length spectrum by short geodesics}

\author{Karen Butt}

\begin{document}

\begin{abstract} 
The marked length spectrum (MLS) of a closed negatively curved manifold $(M, g)$ is known to determine the metric $g$ under various circumstances. We show that in these cases, 
(approximate) values of the MLS on a sufficiently large finite set approximately determine the metric.
Our approach is to recover the hypotheses of our main theorems in \cite{butt22}, namely multiplicative closeness of the MLS functions on the entire set of closed geodesics of $M$.  
We use mainly dynamical tools and arguments, but take great care to show the constants involved depend only on concrete geometric information about the given Riemannian metrics, such as the dimension, diameter, and sectional curvature bounds.
\end{abstract}

\maketitle


\section{Introduction} 
The marked length spectrum of a closed negatively curved Riemannian manifold $(M, g)$ is a function on the free homotopy classes of closed curves in $M$ which assigns to each class the length of its unique geodesic representative. 
It is conjectured that this function completely determines the metric $g$ up to isometry \cite{burnskatok}, and this is known under various conditions, namely in dimension 2 \cite{otalMLS, croke90}, in dimensions 3 or more when one of the metrics is locally symmetric \cite{ham99symplectic, BCGGAFA}, and in general when the metrics are sufficiently close in a suitable $C^k$ topology \cite{GL19, GKL22}.
(See the introductions to \cite{butt22, GL19} for more detailed discussions.)

In the case where $(M, g)$ has constant negative curvature and dimension at least 3, the fundamental group of $M$ already determines $g$ by Mostow Rigidty \cite{mostow}. 
When $\dim M = 2$, the Teichm{\"u}ller space of all such metrics has finite dimension $6 \, {\rm genus}(M) - 6$. 
Here it is known that the marked length spectrum on a sufficiently large finite subset 
determines the metric up to isometry. 
(See \cite[Theorem 10.7]{farbmargalit} and the introduction to \cite{ham2003}.) 

In the case of variable curvature, on the other hand, the space of all negatively curved metrics on $M$ is infinite-dimensional, so no finite set can suffice.
It is nevertheless natural to ask if finitely many closed geodesics can approximately determine the metric. As far as we know, this question has not been previously considered in the literature. 
In this paper, we do this in two cases: in dimension 2, and in dimension at least 3 when one of the metrics is locally symmetric. 
As mentioned above, these are two of the main cases where it is already known that the full marked length spectrum determines the metric up to isometry. 

In \cite{butt22}, we showed that in each of the above situations, two metrics are bi-Lipschitz-equivalent with constant close to 1 when the marked length spectra are multiplicatively close. 
In light of this, 
we answer our question by first proving that for arbitrary closed negatively curved manifolds, finitely many closed geodesics determine the full marked length spectrum approximately (Theorem \ref{mainthm}).
In fact, we do not require the lengths of the finitely many free homotopy classes to coincide exactly, but only approximately. 

\subsection{Statement of the main result}
To state our main result precisely, we first introduce some notation. 
Let $\mathcal{L}_g$ denote the marked length spectrum of $(M, g)$. Since the set of free homotopy classes of $M$ can be identified with conjugacy classes in the fundamental group $\Gamma$ of $M$, we will write $\mathcal{L}_g(\gamma)$ for the length of the geodesic representative of the conjugacy class of $\gamma \in \Gamma$ with respect to the metric $g$. 

If $(N, g_0)$ is another negatively curved Riemannian manifold with fundamental group isomorphic to $\Gamma$, there is a homotopy equivalence $f: M \to N$ inducing this isomorphism. 

Our work investigates what can be said about $g$ and $g_0$ satisfying 
the hypothesis below:

\begin{hyp}\label{MLSassfinite}
For $L > 0$, let
$ \Gamma_L := \{ \gamma \in \Gamma \, | \, \mathcal{L}_g(\gamma) \leq L \}$.
Now let $\eps > 0$ small and suppose 
\begin{equation*}
1 - \eps \leq \frac{\mathcal{L}_g(\gamma)}{\mathcal{L}_{g_0}(f_* \gamma)}  \leq 1 + \eps
\end{equation*} 
for all $\gamma \in \Gamma_L$.
\end{hyp}

If $L$ is sufficiently large, we obtain estimates for the ratio $\mathcal{L}_g/\mathcal{L}_{g_0}$ on all of $\Gamma$ in terms of $\eps$ and $L$ in Theorem \ref{mainthm} below.
Moreover, our estimates do not depend on the particular pair of metrics under consideration; they are uniform for all $(M, g)$ and $(N, g_0)$ satisfying Hypothesis \ref{MLSassfinite} with pinched sectional curvatures and diameters bounded above.


\begin{thm}\label{mainthm}
Let $(M, g)$ and $(N, g_0)$ be closed Riemannian manifolds of dimension $n \geq 3$, diameter at most $D$, and with sectional curvatures contained in the interval $[-\Lambda^2, -\lambda^2]$. 
Let $\mathcal{L}_g$ and $\mathcal{L}_{g_0}$ denote their marked length spectra. 
Let $\Gamma$ denote the fundamental group of $M$. 
Let $f: M \to N$ be a homotopy equivalence, and let $f_*$ denote the induced map on fundamental groups. 

Then there is $L_0 = L_0(n, \Gamma, \lambda, \Lambda)$ so that the following holds:
Suppose the marked length spectra $\mathcal{L}_g$ and $\mathcal{L}_{g_0} \circ f_*$ satisfy Hypothesis \ref{MLSassfinite} for 
some 
$\eps > 0$ and $L \geq L_0$. 
Let $K = \inf_{\gamma \in \Gamma} \frac{\mathcal{L}_{g}(\gamma)}{\mathcal{L}_{g_0}(f_* \gamma)}$.
Then, for all $\gamma \in \Gamma$, we have
\begin{equation*}
1 - (\eps + C L^{-\alpha}) \leq \frac{\mathcal{L}_g(\gamma)}{\mathcal{L}_{g_0}(f_* \gamma)} \leq 1 + (\eps + C L^{-\alpha})
\end{equation*} 
for some constant $C > 0$ depending only on $n, \Gamma, \lambda, \Lambda, D$ and any $\alpha < \frac{K}{2n} \frac{\lambda}{\Lambda}$. 
\end{thm}

\begin{rem}\label{rem:n=2}
Our proof of Theorem \ref{mainthm} gives the following result when $n = 2$. 
In this case, 
we may assume without loss of generality that the homotopy equivalence $f: M \to N$ is a diffeomorphism. 
This means there is some $A \geq 1$ so that $f$ and $f^{-1}$ are $A$-Lispchitz. Then Theorem \ref{mainthm} holds with the caveat that the constant $C$ in the conclusion depends additionally on $A$. 
This Lipschitz constant $A$ can equivalently be controlled by the $C^0$ distance $\Vert g - f^* g_0 \Vert_{C^0}$. (See also Remark \ref{rem:n=2fu} below.)
\end{rem}

\begin{rem}
The dependence of the constant $C$ on the diameters of $M$ and $N$ can be replaced with a dependence on the injectivity radius $i_M$ of $M$. See Remark \ref{rem:ivsd}. 
\end{rem}

\begin{rem}
One can obtain similar conclusions to those in Theorem \ref{mainthm} by combining Proposition \ref{holder}, the proof of which occupies the vast majority of this paper, with finite Livsic theorems such as  \cite[Theorem 1.2]{finitelivsic} and \cite{skatok}. However, our direct method in Section \ref{mainpfsection} yields estimates which depend only on concrete geometric information (dimension, diameter, sectional curvature) and not on the given flows;
see Remark \ref{finitelivsic}. 
\end{rem}

\subsection{Applications to rigidity}

Since the conclusion of Theorem \ref{mainthm} is the main hypothesis in \cite{butt22}, we recover versions of all our quantitative marked length spectrum rigidity results from only finite data, i.e., only assuming Hypothesis \ref{MLSassfinite}.
Let $\tilde \eps = \tilde \eps(\eps, L, n, \Gamma, \lambda, \Lambda, D) = \eps + CL^{-\alpha}$
 as in the conclusion of Theorem \ref{mainthm}.
This theorem states that  
Hypothesis \ref{MLSassfinite} results in
\begin{equation}\label{newMLSass}
 1 - \tilde \eps \leq \frac{\mathcal{L}_g(\gamma)}{\mathcal{L}_{g_0}(f_* \gamma)} \leq 1 + \tilde \eps 
 \end{equation}
for all $\gamma \in \Gamma$. 
Applying Theorems B and C in \cite{butt22} yields the following two corollaries. 
Note that the \emph{stable exponent} of $(M, g)$ refers to a constant $0 < \alpha_0 \leq 1$ such that the Anosov splitting of the geodesic flow on $T^1 M$ has $C^{\alpha_0}$ H{\"o}lder regularity. 
See \cite[Appendix, Proposition 4.4]{ballmann}, \cite[Lemma 2.16]{butt22} for more details. 
By \cite[Corollary 1.7]{hass94regularity}, we have $\alpha_0 = \min(1, 2 \lambda/\Lambda)$. 

\begin{cor}\label{maincor} 
Let $(M, g)$ and $(N, g_0)$ be a pair of homotopy-equivalent closed negatively curved Riemannian manifolds of dimension $n$ at least 3 and fundamental group $\Gamma$. 
Suppose further that $(N, g_0)$ is locally symmetric and that the curvature tensor $\mathcal{R}$ of $M$ satisfies $\Vert \nabla \mathcal{R} \Vert \leq R$ for some constant $R > 0$. 
Let $\alpha_0 = \alpha_0(\lambda, \Lambda) = \min(2 \lambda/\Lambda, 1)$ denote the above-defined stable H{\"o}lder exponent of $(M, g)$.
Then there exists $\eps_0 = \eps_0(\lambda, \Lambda, R)$ such that for any $\eps \in (0, \eps_0]$ 
the following holds: Suppose there is a 
homotopy equivalence
$f: M \to N$ so that the marked length spectra of $M$ and $N$ satisfy Hypothesis \ref{MLSassfinite} for $\eps$ as above. 
Then, there is a smooth diffeomorphism $F: M \to N$, homotopic to $f$, 
such that for all $v \in TM$ we have
\begin{equation}\label{lipest}
(1 - C \tilde{\eps}^{\alpha}) \Vert v \Vert_g \leq \Vert dF (v) \Vert_{g_0} \leq (1 + C \tilde{\eps}^{\alpha}) \Vert v \Vert_g
\end{equation}
for some $C > 0$ depending only on $n, \Gamma, D, i_0, a, b, R$ and for any  $\alpha < (1 - \eps) \frac{\lambda^2}{\Lambda} \frac{\alpha_0^2}{16 n}$.

\end{cor}

\begin{cor}\label{volumeest}
Let $(M, g)$ and $(N, g_0)$ be a pair of homotopy-equivalent closed Riemannian manifolds 
of dimension $n \geq 3$, fundamental group $\Gamma$, diameter at most $D$, injectivity radius at least $i_0$, and sectional curvatures contained in the interval $[- \Lambda^2, \lambda^2]$.
Suppose further that the geodesic flow on $T^1 N$ has $C^1$ Anosov splitting, and that the curvature tensor $\mathcal{R}$ of $(M, g)$ satisfies $\Vert \nabla \mathcal{R} \Vert \leq R$ for some $R > 0$. 
Let $\alpha_0 = \min(2\lambda/\Lambda, 1)$ denote the above-defined stable H{\"o}lder exponent of $M$. 

Then there exists $\eps_0 = \eps_0(\lambda, \Lambda, R)$ such that for any $\eps \in (0, \eps_0]$  the following holds: Suppose there is $f: M \to N$ so that the marked length spectra of $M$ and $N$ satisfy Hypothesis \ref{MLSassfinite} for $\eps$ as above.
Then
\[ (1 - C \tilde{\eps}^{\alpha}) {\rm Vol}(M) \leq {\rm Vol}(N) \leq (1 + C \tilde{\eps}^{\alpha}) {\rm Vol}(M) \]
for some constant $C = C(n, \Gamma, D,  i_0, \lambda, \Lambda, R) > 0$ and any $\alpha < (1 - \tilde \eps) \frac{\lambda^2}{\Lambda} {\alpha_0^2}$.
\end{cor}

When $n = 2$, note that the bound on the $C^0$ distance required to apply Theorem \ref{mainthm} in dimension 2 (see Remark \ref{rem:n=2}) is implied by the $C^{1, \beta}$ bound hypothesized in \cite[Theorem A]{butt22}.
Hence, combining \cite[Theorem A]{butt22} and Remark \ref{rem:n=2} yields the following.

\begin{cor}
Let $(M, g_0)$ be a closed surface with curvatures contained in the interval $[-\Lambda^2, - \lambda^2]$. Fix $0 < \beta < 1$ and $R > 0$ and let $\mathcal{U}_{g_0}^{1, \beta} (\lambda, \Lambda, R)$ denote the set of all $C^2$ metrics $g$ on $M$ satisfying $\Vert g - g_0 \Vert_{C^{1, \beta}(M)} \leq R$. 
Fix $B > 1$. Then there exists large enough $L = L(B, \lambda, \Lambda, R, \beta)$ so that for any pair $g, h \in \mathcal{U}_{g_0}^{1, \beta} (\lambda, \Lambda, R)$ whose marked length spectra satisfy  $ \mathcal{L}_g(\gamma) = \mathcal{L}_h(\gamma)$ for all $\{ \gamma \in \Gamma \, |, \mathcal{L}_g(\gamma) \leq L \}$,
there is a $B$-Lipschitz map $f: (M, g) \to (M, h)$. 
\end{cor}

\subsection{Structure of the paper}
In Section \ref{mainpfsection}, we start by stating the key dynamical facts used in our proof of Theorem \ref{mainthm}. 
Specifically, we use an estimate for the size of a covering of the unit tangent bundle $T^1 M$ by certain small ``flow boxes", 
in addition to a H{\"o}lder estimate for a certain orbit equivalence between the geodesic flows of $M$ and $N$. 
We then prove the theorem assuming these two facts. 
See the introduction to Section \ref{mainpfsection} below for a rough sketch of the argument.

The rest, and vast majority, of this paper is devoted 
to proving the above-mentioned covering lemma and H{\"o}lder estimate. 
The proofs rely on a few well-established consequences of the hyperbolicity of the geodesic flow. 
However, the standard results from the theory of Anosov flows (uniformly hyperbolic flows) are stated very generally and thus contain a multitude of constants which depend on the given flow in arguably mysterious ways. 
As a result, considerable technical difficulties arise in ensuring the constants depend only on select geometric and topological properties of $(M, g)$ and $(N, g_0)$. 

The main components of this analysis are as follows.
In Section \ref{localprodsection}, we use geometric arguments involving horospheres to investigate the local product structure of the geodesic flow, a key mechanism responsible for many of the salient features of hyperbolic dynamical systems. Indeed, the results of this section are used to prove both the covering lemma and the H{\"o}lder estimate.
The covering lemma is then quickly proved in Section \ref{coversection}.
Finally, in Section \ref{holdersection}, we prove the orbit equivalence of geodesic flows in \cite{gromov3rmks} is H{\"o}lder continuous, also with controlled constants.

\subsection*{Acknowledgements} I am very grateful to my advisor Ralf Spatzier for many helpful discussions and for help reviewing this paper.
I thank Vaughn Climenhaga, Dmitry Dolgopyat, Andrey Gogolev, and Thang Nguyen for helpful conversations. 
I would also like to thank Yair Minsky for help pointing out an error in a previously stated version of Proposition \ref{holder}. 
This research was supported in part by NSF grants DMS-2003712 and DMS-2402173.

\section{Proof of main theorem}\label{mainpfsection}
 
In this section, we will prove Theorem \ref{mainthm}
assuming two key statements: a covering lemma (Lemma \ref{coverr} below) and a H{\"o}lder estimate (Proposition \ref{holder} below). These statements are proved in Sections \ref{coversection} and \ref{holdersection}, respectively.

The basic idea is to start by covering the unit tangent bundle $T^1 M$ with finitely many sufficiently small ``flow boxes", that is, sets obtained by flowing local transversals 
for some small fixed time interval $(0,\delta)$. 
On the one hand, any periodic orbit of the flow that visits each of these boxes at most once is short, i.e., has period at most $\delta$ times the total number of boxes. 
On the other hand, any periodic orbit that is long, i.e., of length more than $\delta$ times the number of boxes, must return to at least one of the boxes more than once before it closes up.
In other words, long periodic orbits contain shorter almost-periodic segments. 
By the Anosov closing lemma, these are in turn shadowed by periodic orbits.
This allows us to approximate the lengths of long closed geodesics with sums of lengths of short ones.  
We then use a H{\"o}lder continuous orbit equivalence $\mathcal{F}: T^1 M \to T^1 N$ to argue that similar approximations hold for the corresponding closed geodesics in $N$.
From this, we are able to estimate the ratio of $\mathcal{L}_g(\gamma)/\mathcal{L}_{g_0}(\gamma)$ for all long geodesics $\gamma$ given our assumed estimate holds for short ones (Hypothesis \ref{MLSassfinite}).

We now introduce the precise statements of the aforementioned covering lemma and H{\"o}lder estimate.
Let $W^{si}$ for $i = s, u$ denote the strong stable and strong unstable foliations for the geodesic flow $\phi^t$ on the unit tangent bundle $T^1 M$.  
For $\delta > 0$, let $W^{si}_{\delta} (v)$ denote the connected component containing $v$ of  $W^{si}(v) \cap B(v, \delta)$, where $B(v, \delta)$ denotes a ball of radius $\delta$ in $T^1 M$ with respect to the Sasaki metric. 
(See Section \ref{localprodsection} for some background on the stable/unstable foliations and the Sasaki metric.)

Let 
$P(v, \delta) = \cup_{v' \in W_{\delta}^{ss}(v)} W_{\delta}^{su}(v')$
and let 
$R(v, \delta) = \cup_{t \in (- \delta/2, \delta/2)} \phi^t P(v, \delta)$. We will call $R(v, \delta)$ a $\delta$-\emph{rectangle}. 
For our proof of Theorem \ref{mainthm}, we use the following estimate for the number of $\delta$-rectangles needed to cover $T^1 M$.

\begin{lem}\label{coverr}
There is 
small enough $\delta_0 = \delta_0(n) > 0$, 
together with 
a constant $C =  C(n, \Gamma, \lambda, \Lambda)$
so that
for any $\delta \leq \delta_0$, there is a covering of $T^ 1 M$
by at most $C / \delta^{2n+1}$ $\delta$-rectangles.
\end{lem}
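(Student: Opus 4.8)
The plan is to cover $T^1M$ by standard metric balls of radius comparable to $\delta$, then show that each such ball is contained in a single $\delta$-rectangle (possibly after inflating the constant), and finally count the balls using a volume-comparison argument. For the counting, I would fix a maximal $\delta$-separated set $\{v_1, \dots, v_N\}$ in $T^1M$ with respect to the Sasaki metric; by maximality the balls $B(v_j, \delta)$ cover $T^1M$, while the balls $B(v_j, \delta/2)$ are pairwise disjoint. Since $\dim T^1M = 2n-1$, a Bishop--Gromov-type volume estimate gives $\mathrm{Vol}(B(v_j, \delta/2)) \geq c\, \delta^{2n-1}$ for a constant $c = c(n, \lambda, \Lambda, i_M)$ once $\delta$ is small enough (below the injectivity radius of the Sasaki metric, which is controlled by the curvature bounds and $i_M$). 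Disjointness then forces $N \leq \mathrm{Vol}(T^1M)/(c\,\delta^{2n-1})$. To turn the volume of $T^1M$ into a dependence on $\Gamma$ rather than on $g$ itself, I would bound $\mathrm{Vol}(T^1M) \leq \omega_{n-1}\mathrm{Vol}(M, g)$ and then bound $\mathrm{Vol}(M,g)$ in terms of the diameter and curvature — and the diameter is itself controlled by $\Gamma$ and $i_M$ via a Milnor--Švarc/systolic argument (a generating set of $\Gamma$ of bounded word length, combined with a lower bound $2i_M$ on closed geodesics, bounds the diameter). This accounts for the $\Gamma$-dependence advertised in the statement.

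The reason the exponent in the lemma is $2n+1$ rather than $2n-1$ is the extra room we lose in passing from a metric ball to a $\delta$-rectangle. A $\delta$-rectangle $R(v,\delta)$ is built from the local product coordinates (strong stable $\times$ strong unstable $\times$ flow), and while $W^{ss}_\delta(v) \times W^{su}_\delta(v) \times (-\delta/2,\delta/2)$ has "diameter" of order $\delta$, the local product structure map is only bi-Hölder, not bi-Lipschitz, with Hölder exponent that can be taken close to $1$ but strictly less. So a Sasaki-ball of radius $\delta$ is contained in a $\delta$-rectangle $R(v, \delta)$ only after replacing $\delta$ by $\delta^\theta$ for some $\theta < 1$ depending on the regularity of the foliations — equivalently, covering by $\delta$-rectangles is no better than covering by balls of radius $\delta^{1/\theta}$, and $\delta^{-(2n-1)/\theta}$ is bounded by $C\delta^{-(2n+1)}$ once $\theta$ is close enough to $1$. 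Thus I would first invoke the local-product-structure results of Section~\ref{localprodsection} to get: there is $\delta_0$ and a constant so that $B(v, c_1\delta^{1/\theta}) \subseteq R(v, \delta)$ for all $v$ and all $\delta < \delta_0$, with all constants depending only on $n, \lambda, \Lambda, i_M$. Then apply the ball-counting above with radius $c_1 \delta^{1/\theta}$.

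I expect the main obstacle to be the first step: establishing that a Sasaki-metric ball of controlled radius sits inside a $\delta$-rectangle, \emph{with the Hölder exponent $\theta$ and all multiplicative constants depending only on the stated geometric data}. This requires quantitative control on the angles between the stable, unstable, and flow directions (so that local product coordinates are non-degenerate uniformly), on the sizes of local stable/unstable manifolds, and on the Hölder norms of the holonomy maps — all of which is exactly what Section~\ref{localprodsection} is set up to provide via horosphere geometry and comparison with constant-curvature models. Once those estimates are in hand with explicit constants, the rest (maximal separated set, Bishop--Gromov, disjointness, and the diameter bound from $\Gamma$) is routine Riemannian geometry. A minor additional point is checking that $\delta_0$ can be chosen below the Sasaki injectivity radius of $T^1M$, which again follows from curvature bounds on the Sasaki metric (controlled by $\lambda, \Lambda$) together with a lower bound on its injectivity radius coming from $i_M$.
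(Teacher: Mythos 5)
Your overall skeleton — take a maximal separated set of vectors, show each Sasaki ball sits inside a $\delta$-rectangle, count by a volume-comparison argument, and convert the volume of $T^1M$ into a dependence on $\Gamma$ and $i_M$ via a diameter bound — is the same as the paper's. The $\Gamma$-dependence is also handled as you suggest: vol$(M)$ is bounded in terms of $n$, $\Gamma$, $\lambda$ by Gromov's theorem, and the diameter bound is then used elsewhere in the estimates.

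However, your explanation of where the exponent $2n+1$ (rather than $2n-1$) comes from is wrong, and it hides a genuine gap in the proposal as written. You attribute the loss to the local product structure map being only bi-H\"older, so that a Sasaki ball $B(v, c\delta^{1/\theta})$ is needed to fit inside $R(v,\delta)$, and then you say this is fine ``once $\theta$ is close enough to $1$.'' But you have no argument that the H\"older exponent satisfies $\theta \geq (2n-1)/(2n+1)$ uniformly in the metric; that inequality is exactly what you would need for $\delta^{-(2n-1)/\theta} \leq C\delta^{-(2n+1)}$, and it does not come for free. In fact, the paper avoids this issue entirely: Proposition~\ref{localprodconst} gives a \emph{Lipschitz} comparison $d(u,u_i) \leq C_0\, d(u_1,u_2)$ for the local product structure of the geodesic flow (horospherical coordinates are better behaved than generic Anosov transversals), so the inclusion $B(v, c\delta) \subseteq R(v,\delta)$ holds with $c = 1/C_0$ independent of $\delta$, and there is no H\"older loss at this step at all. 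The extra $\delta^2$ in the paper's exponent actually enters in the Sasaki-ball volume estimate of Lemma~\ref{sasball}, which lower-bounds vol$\,B(v,\delta)$ by $c\delta^{2n+1}$ (via the product decomposition $B_M(p,\delta/2)\times B_{S^{n-1}}(v,\delta/2)$) rather than by the dimensionally sharp $c\delta^{2n-1}$. So either keep the Lipschitz local product estimate (which Section~\ref{localprodsection} is designed to give you) and invoke a clean Sasaki volume bound, or, if you insist on the H\"older route, you must supply a lower bound on $\theta$ depending only on $n,\lambda,\Lambda,i_M$ — as stated, that step is a gap.
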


\begin{rem}
The main difficulty is showing that the constant $C$ does not depend on the metric $g$, but only on $n$, $\Gamma$, $\lambda$, $\Lambda$.
\end{rem}

\begin{rem}
Rectangles of the form $R(v, \delta)$ are often used to construct Markov partitions, e.g. in \cite{ratner1973markov}.
However, in Lemma \ref{coverr}, we are not constructing a partition, meaning we do not require the rectangles to be measurably disjoint. 
\end{rem}

Now consider the geodesic flows $\phi^t$ and $\psi^t$ on $T^1 M$ and $T^1 N$, respectively.
Recall that a homeomorphism $\mathcal{F}: T^1 M \to T^1 N$ is an \emph{orbit equivalence} if there is some function (cocycle) $a(t, v)$ so that 
 \[ \mathcal{F}(\phi^t v) = \psi^{a(t,v)} \mathcal{F}(v) \]
 for all $v \in T^1 M$ and for all $t \in \R$.  
Since $M$ and $N$ are homotopy-equivalent compact negatively curved manifolds, such an $\mathcal{F}$ exists by \cite{gromov3rmks}. 
Our proof of Theorem \ref{mainthm} relies on the following estimates for the regularity of $\mathcal{F}$.

 \begin{prop}\label{holder} 
Suppose $(M, g)$ and $(N, g_0)$ are closed Riemannian manifolds with sectional curvatures in the interval $[-\Lambda^2, -\lambda^2]$.
Suppose there is $A \geq 1$ such that $f: (M, g) \to (N, g_0)$ and $h: (N, g_0) \to (M, g)$ are $A$-Lipschitz homotopy equivalences with $f \circ h$ homotopic to the identity.
Let $f_*$ denote the map on fundamental groups induced by $f$, and let $K$ denote the number $\inf_{\gamma \in \Gamma} \frac{\mathcal{L}_g(\gamma)}{\mathcal{L}_{g_0}(f_* \gamma)}$.

Then there exists an orbit equivalence of geodesic flows $\mathcal{F}: T^1 M \to T^1 N$ which is $C^1$ along orbits and transversally H{\"o}lder continuous. 
More precisely, there is a small enough $\delta_0 = \delta_0(\lambda, \Lambda)$, together with a constant $C$, depending only on $\lambda$, $\Lambda$, ${\rm diam}(M)$, ${\rm diam}(N)$, $A$,
so that 
\begin{enumerate}
\item $d (\mathcal{F}(v), \mathcal{F}(\phi^t v)) \leq At$ for all $v \in T^1 \tilde M$ and all $t \in \R$,
\item $d(\mathcal{F}(v), \mathcal{F}(w)) \leq C d(v,w)^{\alpha}$ for all $v, w \in T^1 \tilde M$ with $d(v, w) < \delta_0$ and any $\alpha < K \frac{\lambda}{\Lambda}$.
\end{enumerate}
\end{prop}
 
 \begin{rem}\label{rem:n=2fu}
If $n \geq 3$, then the constant $A$ in the above statement depends only on $n, \Gamma, i_0, D, \Lambda$ by \cite[Proposition 2.42]{butt22}. 
 \end{rem}
 
\begin{rem}
It is a standard fact that any orbit equivalence of Anosov flows is $C^0$-close to a H{\"o}lder continuous one; in other words, there are constants $C$ and $\alpha$, depending on the given flows, i.e., on the metrics $g$ and $g_0$, so that $d(\mathcal{F}(v), \mathcal{F}(w)) \leq C d(v,w)^{\alpha}$ \cite[Theorem 6.4.3]{fisherhass}. 
However, we are claiming the stronger statement that for the orbit equivalence in \cite{gromov3rmks}, there is a uniform choice of $C$ and $\alpha$ for all $(M, g)$ and $(N, g_0)$ satisfying the hypotheses of the proposition. 
\end{rem}

\begin{rem}\label{rem:ivsd}
In our context, assuming
Hypothesis \ref{MLSassfinite},
the dependence of the constant $C$ on the diameters of $M$ and $N$ can be replaced with a dependence on the injectivity radius $i_M$. 
Indeed, by \cite[Section 0.3]{gromov1982volume}, the volume of $M$ is bounded above by a constant $V_0$ depending only on $n$, $\Gamma$, and $\lambda$. 
A standard argument 
then shows ${\rm diam}(M)$ is bounded above by some constant $D_0 = D_0(i_M, V_0, \Lambda)$.
Finally, since, for negatively curved manifolds, the injectivity radius is half the length of the shortest closed geodesic \cite[p.178]{petersen}, we have $(1 - \eps) i_M \leq i_N \leq (1 + \eps)i_M$, which shows ${\rm diam}(N)$ is bounded above by a constant $D_0 = D_0(i_M, V_0, \Lambda)$.
\end{rem}

To prove Theorem \ref{mainthm}, we start with a covering of $T^1 M$ by $\delta$-rectangles (see Lemma \ref{coverr}).
Let $\delta_0$ be as in Proposition \ref{holder}, then make $\delta_0$ smaller if necessary so that Lemma \ref{coverr} holds as well. This choice of $\delta_0$ depends only on $n, \lambda, \Lambda$. 
Now fix $\delta \leq \delta_0$, together with a covering $T^1 M = \cup_{i = 1}^m R(v_i, \delta)$. 
By Lemma \ref{coverr}, we can take $m \leq C / \delta^{2n+1}$. Since $\delta$ is now fixed, we use the notation $R_i$ for the rectangle $R(v_i, \delta)$ and $P_i$ for the transversal $P(v_i, \delta)$.

Let $v \in T^1 M$. Then $v \in P_i$ if and only if $\phi^t v \in R_i$ for all $t \in (-\delta/2, \delta/2)$. 
Moreover, if $v$ is tangent to a closed geodesic of length $\tau$, then for any rectangle $R_i$, the set 
\[\{ t \in (-\delta/2, \tau - \delta/2) | \, \phi^t v \cap R_i \neq \emptyset \}\]
is a (possibly empty) disjoint union of intervals of length $\delta$.

\begin{defn}
Fix a covering of $T^1M$ by $\delta$-rectangles $R_1, \dots, R_m$ as above. 
Suppose $\eta$ is a closed geodesic of length $\tau$ with $\eta'(0) = v$. Suppose that for each $i$, the set 
\[\{ t \in (-\delta/2, \tau - \delta/2) | \, \phi^t v \cap R_i \neq \emptyset \}\] 
consists of at most a single interval. Then we say $\eta$ is a \emph{short geodesic} (with respect to the covering $R_1, \dots R_m$).
\end{defn}

\begin{rem}\label{shortmeansshort}
Let $L = L(\delta) = C \delta^{-2n}$, where $C = C(n, \Gamma, \lambda, \Lambda)$ is the constant in the statement of Lemma \ref{coverr}.
If $\eta$ is a short geodesic, then $l_g(\eta) \leq m \delta \leq C \delta^{-2n} = L$.
\end{rem}

\begin{prop}\label{mcoding}
Let $\gamma$ be any closed geodesic in $M$. 
Then there is $k \in \mathbb{N}$ (depending on $\gamma$) and short geodesics $\eta_1, \dots, \eta_{k+1}$ 
so that 
\[ \left| l_g(\gamma) - \sum_{i=1}^{k+1} l_g(\eta_i) \right| <  2 k C \delta \]
for some constant $C = C(\lambda,\Lambda)$. 
\end{prop}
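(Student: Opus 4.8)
The plan is to decompose a long closed geodesic into bounded-length almost-periodic pieces using the rectangle covering, then apply the Anosov closing lemma to each piece to replace it with a genuine short closed geodesic, keeping careful track of how the total error accumulates. Concretely, let $\gamma$ be a closed geodesic of length $\tau$ with $\gamma'(0) = v$. If $\gamma$ is already short (in the sense of the definition above), we are done with $k=0$. Otherwise, by definition there is some rectangle $R_{i}$ which $\phi^t v$ enters at least twice on $(-\delta/2, \tau - \delta/2)$; say the corresponding entry intervals are centered at times $t_1 < t_2$. The key geometric observation is that $\phi^{t_1} v$ and $\phi^{t_2} v$ both lie in $R_i = R(v_i, \delta)$, so they are within Sasaki-distance $O(\delta)$ of each other, hence the orbit segment of $\phi^{t_1}v$ of length $t_2 - t_1$ is a $(t_2-t_1, O(\delta))$-almost-closed orbit. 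By the Anosov closing lemma (with constants depending only on $\lambda, \Lambda, i_M$ — this uniformity is presumably established via the local product structure results of Section \ref{localprodsection}), there is a genuine closed geodesic $\eta_1$ of length $l_g(\eta_1)$ with $|l_g(\eta_1) - (t_2 - t_1)| < C\delta$ for a constant $C = C(\lambda, \Lambda, i_M)$.

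First I would set up an induction on the number of ``excess visits'' to rectangles. Having cut out the segment from $t_1$ to $t_2$ and replaced it by $\eta_1$, I am left with a closed orbit of $\phi^t$ through $\phi^{t_2}v$ of length $\tau - (t_2 - t_1)$ — more precisely, the concatenation of the two complementary arcs, which closes up exactly because $\phi^{t_1}v$ and $\phi^{t_2}v$ lie in the same rectangle (here I should be slightly careful: the ``closing up'' is only approximate, so what I really get is another almost-closed orbit of shorter length, which closing gives a true closed geodesic $\gamma'$ with $|l_g(\gamma') - (\tau - (t_2-t_1))| < C\delta$). The new closed geodesic $\gamma'$ visits strictly fewer rectangles with multiplicity than $\gamma$ did — or at least, a suitable monovariant strictly decreases — so I can iterate. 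After $k$ steps the monovariant is exhausted and the remaining geodesic $\eta_{k+1}$ is short; collecting $\eta_1, \dots, \eta_k$ from the cutting steps and $\eta_{k+1}$ at the end, and summing up the $k$ error bounds of size $C\delta$ from cutting together with the $k$ errors of size $C\delta$ from re-closing the complementary arc (roughly $2k$ applications total), yields $\left| l_g(\gamma) - \sum_{i=1}^{k+1} l_g(\eta_i) \right| < 2kC\delta$ as claimed.

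The main obstacle I anticipate is making the inductive bookkeeping genuinely rigorous: after one excision the ``orbit'' $\gamma'$ is not literally an orbit segment of the original geodesic but a broken/shadowed one, so I must either (a) carry an explicit inductive hypothesis that at stage $j$ I have an almost-closed orbit of length $\tau_j$ with $|\tau_j - (\tau - \sum_{i\le j} l_g(\eta_i))|$ controlled by $2jC\delta$, whose closing geodesic still admits a coding by the rectangles $R_1,\dots,R_m$ with one fewer excess return; or (b) argue entirely on the symbolic side, encoding $\gamma$ as a closed word in the rectangles visited, extracting subwords between repeated letters, and only at the end invoking the closing lemma once per extracted subword. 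Approach (b) is cleaner but requires knowing that the shadowing orbit really does visit exactly the rectangles dictated by the subword, which needs the rectangles to be not too small relative to the shadowing constant — a compatibility condition on $\delta$ that should already be available from $\delta < \delta_0$. A secondary point requiring care is that the $\eta_i$ produced by closing must themselves be \emph{short} with respect to the covering; this should follow because each excised segment, being trapped between two consecutive visits to one rectangle, cannot revisit any rectangle twice (otherwise we would have chosen a shorter segment), and shadowing preserves this up to the choice of $\delta_0$. I would state and prove a small lemma to that effect before the main induction.
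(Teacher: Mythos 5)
Your proposal follows essentially the same route as the paper's proof: pick a rectangle that the orbit visits at least twice, cut the closed orbit at the repeated visit into two almost-closed arcs, apply the Anosov closing lemma (with constants controlled by the local-product-structure results of Section~\ref{localprodsection}, exactly as you suspected) to each arc to replace it by a genuine closed geodesic at additive cost $C\delta$, and iterate on the leftover piece; each cut costs $2C\delta$ and one final piece $\eta_{k+1}$ is short, giving the stated bound. The concerns you raise in your last paragraph about making the induction and termination watertight are legitimate but apply equally to the paper's argument, which also disposes of the iteration with a ``continuing in this manner'' step.
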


\begin{proof}
If $\gamma$ is already a short geodesic, then $k = 0$ and $\eta_1 = \gamma$. 
If not, then let $i$ be the smallest index so that $\gamma$ crosses through $R_i$ in at least two time intervals. 
Let $v \in P_i$ tangent to $\gamma$ and let $t_1 > 0$ be the first time so that $\phi^{t_1} v \in P_i$. 
By the Anosov closing lemma, there is $w_1$ tangent to a closed geodesic $\gamma_1$ of length $t_1'$ with $|t_1 - t_1'| < C \delta$, where $C$ depends only on the sectional curvature bounds $\lambda$ and $\Lambda$ 
(see Lemma \ref{ACL}). 
Similarly, applying the Anosov closing lemma to the orbit segment $\{ \phi^t v \, | \, t \in [t_1, l_g(\gamma)] \}$ gives $w_2$ 
tangent to a closed geodesic $\gamma_2$ of length $t_2'$ with $|(l_g(\gamma) - t_1) - t_2' | < C \delta$.  
This means $ |l_g(\gamma) - l_g(\gamma_1) - l_g(\gamma_2)| < 2 C \delta$.

Iterating the above process, we can ``decompose" $\gamma$ into short geodesics. More precisely,
if $\gamma_1$ is not a short geodesic, then there is some other rectangle $R_j$ through which $\gamma_1$ crosses twice. By the same argument as above, we get $|l_g(\gamma_1) - l_g(\gamma_{1,1}) - l_g(\gamma_{1,2})| < 2 C \delta$ for some $\gamma_{1, 1}, \gamma_{1, 2} \in \Gamma$. 
Continuing in this manner, we get the desired conclusion.
\end{proof}
 
Next, we show that $l_{g_0}(\gamma)$ is still well-approximated by the sum of the $g_0$-lengths of the \emph{same} free homotopy classes $\eta_1, \dots, \eta_{k+1}$ that were used to do the approximation with respect to $g$. For this, we use the estimates for the regularity of the orbit equivalence $\mathcal{F}: T^1 M \to T^1 N$ in Proposition \ref{holder}. 
Recall that $a(t,v)$ denotes the time-change cocycle, i.e., $\mathcal{F}(\phi^t v) = \psi^{a(t,v)} \mathcal{F}(v)$.

 \begin{lem}\label{ncoding}
Let $\gamma$ and $\eta_1, \dots, \eta_{k+1}$ as in Proposition \ref{mcoding}. Then an analogous estimate holds in $(N, g_0)$, namely,
\[ \left| l_{g_0}(\gamma) - \sum_{i=1}^{k+1} l_{g_0}(\eta_i) \right| < 2k C \delta^{\alpha}, \] where $C$ depends only on 
$\lambda$, $\Lambda$, $D$, $A$, 
and $\alpha = A^{-1} \lambda/\Lambda$ is the H{\"o}lder exponent in the statement of Proposition \ref{holder}. 
\end{lem}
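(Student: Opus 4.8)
The plan is to transport the decomposition from Proposition \ref{mcoding} through the orbit equivalence $\mathcal{F}: T^1 M \to T^1 N$ and track the errors using the regularity estimates of Proposition \ref{holder}. The key observation is that for a closed geodesic $\eta$ in $M$ with $\eta'(0) = v$ and period $l_g(\eta) = \tau$, the orbit of $v$ is periodic under $\phi^t$, so $\mathcal{F}(v)$ is periodic under $\psi^t$ and its period is exactly $a(\tau, v)$. Since the free homotopy class of the closed orbit through $\mathcal{F}(v)$ in $T^1 N$ corresponds to $\eta$ under the identification of periodic orbits with conjugacy classes in $\Gamma$ (this is what it means for $\mathcal{F}$ to respect the isomorphism $f_*$), we get $l_{g_0}(\eta) = a(l_g(\eta), v)$.

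First I would recall the inequalities satisfied by the cocycle $a$. From part (1) of Proposition \ref{holder}, namely $d(\mathcal{F}(v), \mathcal{F}(\phi^t v)) \leq A t$, together with the lower bound on the speed of the time change (which follows symmetrically from applying the same estimate to $\mathcal{F}^{-1}$, or directly from the construction in \cite{gromov3rmks}; the point is that $A^{-1} t \leq a(t,v) \leq A t$ for $t \geq 0$, with the same $A$ that appears in Proposition \ref{holder} after possibly enlarging it), we see that $a(t, v)$ and $t$ are comparable up to the factor $A$ depending only on $n, \Gamma, \lambda, \Lambda, i_M, i_N$. This already gives the crude bound needed to control the total accumulated error over $k$ pieces.

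Next, the heart of the argument: I apply Proposition \ref{mcoding} to get short geodesics $\eta_1, \dots, \eta_{k+1}$ with $|l_g(\gamma) - \sum_i l_g(\eta_i)| < 2kC\delta$. Writing $\tau = l_g(\gamma)$ and $\tau_i = l_g(\eta_i)$, I have $l_{g_0}(\gamma) = a(\tau, v)$ and $l_{g_0}(\eta_i) = a(\tau_i, v_i)$ for appropriate base vectors. The triangle inequality gives
\[ \left| l_{g_0}(\gamma) - \sum_{i=1}^{k+1} l_{g_0}(\eta_i) \right| \leq \left| a(\tau, v) - a\Big(\sum_i \tau_i, v\Big) \right| + \left| a\Big(\sum_i \tau_i, v\Big) - \sum_i a(\tau_i, v_i) \right|. \]
The first term is bounded by $A |\tau - \sum_i \tau_i| \leq A \cdot 2kC\delta$ using the Lipschitz-in-time property. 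For the second term, I use that the closed orbit of $\mathcal{F}(v)$ in $T^1N$ is shadowed by the concatenation of the shifted closed orbits of the $\mathcal{F}(v_i)$: going back to the Anosov closing lemma construction in Proposition \ref{mcoding}, the orbit segments of $v$ of lengths $\tau_i$ over $[0, \tau]$ have endpoints $\delta$-close to the periodic orbits through the $v_i$; applying $\mathcal{F}$ and invoking part (2) of Proposition \ref{holder} converts this $\delta$-closeness into $C\delta^\alpha$-closeness in $T^1N$ with $\alpha = A^{-1}\lambda/\Lambda$, and then each comparison $|a(\tau_i, v) - a(\tau_i, v_i)| = |l_{g_0}\text{-length of the }\mathcal F\text{-image segment} - l_{g_0}(\eta_i)|$ contributes at most $O(\delta^\alpha)$, for a total of $2kC\delta^\alpha$. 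Combining and absorbing $\delta \leq \delta^\alpha$ (since $\delta < 1$ and $\alpha < 1$) into the constant $C$ yields the claimed bound.

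\textbf{Main obstacle.} The delicate point is the bookkeeping in the second term: one must verify that the shadowing relation between the orbit segments of $v$ and the periodic orbits $\eta_i$ in $M$ — which is exactly what the iterated Anosov closing lemma in Proposition \ref{mcoding} provides, with $\delta$-errors at $k$ breakpoints — is \emph{uniformly} transported by $\mathcal{F}$ without the error being amplified by a factor depending on $k$ in a worse-than-linear way, and that the individual contributions genuinely add up rather than compound. This requires being careful that each Anosov-closing comparison happens at a single breakpoint and that the H\"older estimate is applied to the transversal displacement at that breakpoint only. The factor $A$ relating $a(t,v)$ to $t$ must also be pinned down to depend only on the listed geometric data; this is where one appeals to the controlled quasi-isometry constants of Section \ref{QI} and the explicit structure of Gromov's orbit equivalence rather than to a soft existence statement.
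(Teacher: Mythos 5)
Your plan has the right ingredients: the identity $l_{g_0}(\eta)=a(l_g(\eta),v)$ for periodic $v$, the cocycle structure of $a$, Lipschitz-in-$t$ control from Proposition \ref{holder}(1), and H\"older control from Proposition \ref{holder}(2). The first term of your triangle inequality is handled correctly. But the second term hides a genuine gap, and it is precisely the ``main obstacle'' you flag without resolving. You write that ``the orbit segments of $v$ of lengths $\tau_i$ over $[0,\tau]$ have endpoints $\delta$-close to the periodic orbits through the $v_i$,'' but that is not what Proposition \ref{mcoding} gives. The decomposition there is \emph{recursive}: the Anosov closing lemma is applied once to $v$'s orbit to produce $\gamma_1,\gamma_2$; if $\gamma_1$ is not short, the closing lemma is then applied to $\gamma_1$'s orbit (not $v$'s) to produce $\gamma_{1,1},\gamma_{1,2}$, and so on. A leaf $\eta_i$ at depth $d_i$ in this binary tree shadows a segment of an intermediate geodesic, which in turn shadows a segment of a shallower one, so the distance from $v_i$ to the relevant piece of $v$'s orbit compounds to $O(d_i\delta)$ rather than $O(\delta)$. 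Feeding $d_i\delta$ into the H\"older estimate gives a per-leaf error of $O((d_i\delta)^\alpha)$, and since $d_i$ can be as large as $k$ (e.g.\ a ``spine'' tree), the total could be as bad as $O(k^{1+\alpha}\delta^\alpha)$, which is weaker than the claimed $2kC\delta^\alpha$.

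The paper sidesteps this by never comparing a leaf $\eta_i$ directly to $v$. Instead it iterates the comparison one split at a time: at each internal node it compares a parent $\gamma'$ to its two immediate children $\gamma'_1,\gamma'_2$, all related by a \emph{single} application of the closing lemma with error $O(\delta)$; Proposition \ref{holder}(2) together with Lemma \ref{timesclose} then gives $|a(t_1,v')-a(t_1',w'_1)|<2C\delta^\alpha$ and hence $|l_{g_0}(\gamma')-l_{g_0}(\gamma'_1)-l_{g_0}(\gamma'_2)|<4C\delta^\alpha$. Summing over the $k$ internal nodes produces $O(k\delta^\alpha)$ with no compounding. So to repair your argument you should replace the global triangle inequality on the leaves by the parent-to-children comparison at each stage of the recursion tree, exactly mirroring the iterative structure of Proposition \ref{mcoding}.
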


\begin{proof}
As in the proof of Proposition \ref{mcoding}, let $v \in T^1M$ tangent to $\gamma$. 
By the Anosov closing lemma, there is $w_1 \in T^1 M$ tangent to a closed geodesic $\gamma_1$ of length $t_1'$ such that $d(v, w_1) < C \delta$, for some $C = C(\lambda, \Lambda)$ (Lemma \ref{ACL}). Additionally, $d(\phi^{t_1} v, \phi^{t_1'} w_1) < C \delta$. 

By Proposition \ref{holder}, we know $d(\mathcal{F}(v), \mathcal{F}(w_1)) < C \delta^{\alpha}$ for some $C = C(\lambda, \Lambda, D, A)$. 
Moreover, since $\mathcal{F}(v)$ and $\mathcal{F}(w_1)$ remain $C \delta^{\alpha}$-close after being flowed by times $a(t_1, v)$ and $a(t_1', w)$, respectively,
it follows that $|a(t_1, v) - a(t_1', w_1)| < 2 C \delta^{\alpha}$. (We defer the short proof of this fact to Section \ref{localprodsection}; see Lemma \ref{timesclose}.)

Similarly, the Anosov closing lemma applied to the orbit segment $\{ \phi^t v \, | \, t \in [t_1, l_g(\gamma)] \}$ gives $w_2$ tangent to a closed geodesic $\gamma_2$ of length $t_2'$.
By an analogous argument, $|a(l_g(\gamma) - t_1, v) - a(t_2', w_2)| < 2 C \delta^{\alpha}$.
Since $a(t,v)$ is a cocycle we get $|a(l_g(\gamma), v) - a(t_1', w_1) - a(t_2', w_2)| < 4C \delta^{\alpha}$. 

Using that $\mathcal{F}$ is a $\Gamma$-equivariant orbit-equivalence, it follows that $a(l_g(\gamma), v) = l_{g_0}(\gamma)$ whenever $v \in T^1 M$ is tangent to the closed geodesic $\gamma$. 
So the estimate in the previous paragraph can be rewritten as $|l_{g_0} (\gamma) - l_{g_0}(\gamma_2) - l_{g_0}(\gamma_2)| < 4 C \delta^{\alpha}$. 
As such, we can iterate the process in Proposition \ref{mcoding} and get an additive error of $4 C \delta^{\alpha}$ at each stage. 
\end{proof}
  
\begin{proof}[Proof of Theorem \ref{mainthm}]
Recall from Remark \ref{shortmeansshort} that $L = L(\delta) = C \delta^{-2n}$ for some $C = C(n, \Gamma, \lambda, \Lambda)$. Since we fixed $\delta \leq \delta_0 = \delta_0(n, \lambda, \Lambda)$, we see that $L \geq L_0 = L(\delta_0)$. 
By Lemma \ref{coverr}, 
this choice of $L_0$ depends only on $n$, $\Gamma$, $\lambda$, $\Lambda$, $i_M$.

Recall as well that we are assuming 
\begin{equation*}\label{hypothesis}
1 - \eps \leq \frac{\mathcal{L}_g(\gamma)}{\mathcal{L}_{g_0}(\gamma)} \leq 1 + \eps
\end{equation*}
for all $\gamma \in \Gamma_L: = \{ \gamma \in \Gamma \, | \, l_{g} (\gamma) \leq L \}$ (see Hypothesis \ref{MLSassfinite}). 
We then have
\begin{align*}
l_g(\gamma) &\leq \sum_{i=1}^{k+1} l_g(\gamma_i) + 2 k C \delta \tag{Proposition \ref{mcoding}} \\
&\leq  (1 + \eps) \sum_{i=1}^{k+1} l_{g_0}(\gamma_i) + 2 k C \delta \tag{Hypothesis \ref{MLSassfinite}} \\
&\leq ( 1 + \eps) l_{g_0}(\gamma) + (1 + \eps) 2k(2C' \delta^{\alpha} + C \delta) \tag{Proposition \ref{ncoding}} \\ 
&\leq (1 + \eps) l_{g_0} (\gamma) + k C'' \delta^{\alpha}.
\end{align*}
Using this, we consider the ratio
\begin{align*}
\frac{l_g(\gamma)}{l_{g_0}(\gamma)} &\leq (1 + \eps) + \frac{ k C'' \delta^{\alpha}}{l_{g_0}(\gamma)} \\
&\leq 1 + \eps + \frac{k C'' \delta^{\alpha}}{\sum_{i=1}^{k+1} l_{g_0}(\gamma_i) - 2 k \delta} \tag{Proposition \ref{ncoding}}\\
&\leq 1 + \eps + \frac{k C'' \delta^{\alpha} }{2 k i_N - 2 k \delta} \\
&= 1 + \eps +  \frac{C'' \delta^{\alpha}}{2 i_N - 2 \delta}.
\end{align*}
In the last inequality, we used the fact that $l_{g_0}(\gamma) \geq 2 i_N$ for all $\gamma$. 

Finally, by the definition of $L$ in Remark \ref{shortmeansshort}, we have $\delta = C L^{-1/2n}$, where $C$ is a constant depending only on $n$, $\Gamma$, $\lambda$, $\Lambda$. 
So we can write that the ratio $l_g(\gamma)/l_{g_0}(\gamma)$ is between $1 \pm (\eps + C' L^{-\alpha/2n})$, where $\alpha$ is the H{\"o}lder exponent in the statement of Proposition \ref{holder}. 
\end{proof}

\begin{rem}\label{finitelivsic}
There is a way to obtain approximate control of the marked length spectrum from finitely many geodesics by combining Proposition \ref{holder} with the finite Livsic theorem in \cite{finitelivsic}, but our direct method above yields better estimates. 

Let $a(t,v)$ denote the time change function for the orbit equivalence $\mathcal{F}$ in Proposition \ref{holder}. By the definition of $a(t,v)$ in Lemma \ref{alinj} 
this cocycle is differentiable in the $t$ direction. Let $a(v) = \frac{d}{dt}|_{t=0} a(t,v)$. It follows from the formula for $a(t,v)$ in Lemma \ref{alinj} and Lemma \ref{buniholder} that $a(v)$ is of $C^{\alpha}$ regularity, where $\alpha$ is the same H{\"o}lder exponent as in the statement of Proposition \ref{holder}. 
It follows from Lemma \ref{bbd} and the proof of Lemma \ref{alinj} that $\Vert a(v) \Vert_{C^0} \leq A$, where $A$ is the constant in Lemma \ref{bbd}. 
Hence, $\Vert a \Vert_{C^{\alpha}} \leq A + C$, where $C$ is the constant in Proposition \ref{holder}.

Now let $\{ \phi^t v \}_{0 \leq t \leq l_g(\gamma)}$ be the $g$-geodesic representative of the free homotopy class $\gamma$. Then $l_{g_0}(\gamma) = \int_0^{l_{g}(\gamma)} a(\phi^t v) \, dt.$
Let $f(v) = (a(v) - 1) / \Vert a - 1 \Vert_{C^{\alpha}}$. Then $\Vert f \Vert_{C^{\alpha}} \leq 1$ and Hypothesis \ref{MLSassfinite} implies 
\[ \frac{1}{l_g(\gamma)} \left|  \int_0^{l_{g}(\gamma)} f(\phi^t v) \, dt \right| \leq \frac{\eps}{A + C}\] 
for all $\gamma \in \Gamma_L$. 
Setting $L = \left( \frac{\eps}{C+A} \right)^{-1/2}$ means that $f$ satisfies the hypotheses of Theorem 1.2 in \cite{finitelivsic}. 
This theorem implies that for all $\gamma \in \Gamma$, the ratio $\mathcal{L}_g/\mathcal{L}_{g_0}$ is between $1 \pm C' \left( \frac{\eps}{C+A} \right)^{\tau}$, where $C'$ and $\tau$ are constants depending on the given flow. 
Our direct method above yields an exponent of $\alpha/4n$ in place of $\tau$. 
\end{rem}

\section{Local product structure}\label{localprodsection}

We consider the distance $d$ on $T^1 M$ induced by the \emph{Sasaki metric} $g^S$ on $T^1M$, which is in turn defined in terms of the Riemannian inner product $g$ on $M$ (see \cite[Exercise 3.2]{docarmo} for the definition). Throughout the rest of this paper, we will make use of the following standard facts relating the Sasaki distance $d$ to the distance $d_M$ on $M$ coming from the Riemannian metric $g$ and the distance $d_{T^1_q M}$ on $S^{n-1} \cong T^1_q M$.
Let $v, w \in T^1 M$ be unit tangent vectors with footpoints $p$ and $q$ respectively. Let $v' \in T^1_q M$ be the vector obtained by parallel transporting $v$ along the geodesic joining $p$ and $q$. 
Then we have
\begin{equation}\label{sasaki}
d_M(p, q), d_{T^1_q M} (v', w) \leq d(v,w) \leq d_M(p, q) + d_{T^1_q M} (v', w).
\end{equation}
For convenience, we will often write $d$ in place of $d_M$ when it is clear from context that we are considering the distance between points as opposed to between unit tangent vectors.

Recall the geodesic flow on the unit tangent bundle of a negatively curved manifold is Anosov, and thus has \emph{local product structure}. 
This means every point $v$ has a neighborhood $V$ which satisfies: for all $\eps > 0$, there is $\delta > 0$ so that whenever $x, y \in V$ with $d(x,y) \leq \delta$ there is a point $[x,y] \in V$ and a time $|\sigma(x, y)| < \eps$ such that
\[ [x,y] = W^{ss}_{\eps}(x) \cap W^{su}_{\eps}(\phi^{\sigma(x,y)} y) \]
\cite[Proposition 6.2.2]{fisherhass}.

Moreover, there is a constant $C_0 = C_0(\delta)$ so that $d(x,y) < \delta$ implies $d_{ss}(x, [x,y]), d_{su}(\phi^{\sigma(x,y)} [x,y], y) \leq C_0 d(x,y)$, where $d_{ss}$ and $d_{su}$ denote the distances along the strong stable and strong unstable manifolds, respectively. 

To describe the stable and unstable distances $d_{ss}$ and $d_{su}$, we first recall the stable and unstable manifolds $W^{ss}$ and $W^{su}$ for the geodesic flow have the following geometric description (see, for instance, \cite[p. 72]{ballmann}).  
Let $v \in T^1 \tilde M$. Let $p \in \tilde M$ be the footpoint of $v$ and let $\xi \in \partial \tilde M$ be the forward projection of $v \in T^1 \tilde M$ to the boundary. 
Let $B_{\xi, p}$ denote the Busemann function on $\tilde M$ and let $H_{\xi, p}$ denote its zero set. Then the lift of $W^{ss}(v)$ to $T^1 \tilde M$ is given by $\{ - {\rm grad} B_{\xi, p}(q) \, | \, q \in H_{\xi, p} \}$.
If $\eta$ denotes the projection of $-v$ to the boundary $\partial \tilde M$, then the lift of $W^{su}(v)$ to $T^1 \tilde M$ is analogously given by $\{ {\rm grad}B_{\eta, p}(q) \, | \, q \in H_{\eta, p} \}$. 

Now let $v \in T^1 \tilde M$ and $w \in W^{ss}(v)$. Let $p$ and $q$ denote the footpoints of $v$ and $w$ respectively. Define the stable distance $d_{ss}(v,w)$ to be the horospherical distance between $p$ and $q$, 
i.e., the distance obtained from restricting the Riemannian metric $g$ on $\tilde M$ to a given horosphere. The unstable distance is defined analogously. 

From the above description of $W^{ss}$ and $W^{su}$ in terms of normal fields to horospheres, it follows that the local product structure for the geodesic flow enjoys stronger properties than those for a general Anosov flow given in the first paragraph. 
First, the product structure is (almost) globally defined, meaning given $v \in T^1 \tilde M$, the neighborhood $V$ in the first paragraph consists of all vectors $w$ which do not lie on the oriented geodesic determined by $-v$ (see, for instance,
\cite{coudenelocprod}). 
Second, the bound on the temporal function $\sigma$ can be strengthened:
\begin{lem}\label{temporalbd}
If $d(v,w) < \delta$, then $|\sigma(v,w)| < \delta$,
for all $\delta > 0$.
\end{lem}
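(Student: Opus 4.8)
The plan is to use the explicit geometric description of the stable and unstable manifolds as normal fields to horospheres, together with the fact that on the universal cover $\tilde M$ the local product structure is globally defined. Let $v, w \in T^1 \tilde M$ with $d(v,w) < \delta$, and let $p, q$ be their footpoints and $\xi, \xi'$ (resp. $\eta, \eta'$) the forward (resp. backward) endpoints of the geodesics they determine. The point $[v,w]$ and time $\sigma = \sigma(v,w)$ are characterized by $[v,w] = W^{ss}(v) \cap W^{su}(\phi^{\sigma} w)$. Since $W^{ss}(v)$ consists of the inward normals to horospheres centered at $\xi$, and $W^{su}(\phi^\sigma w)$ consists of outward normals to horospheres centered at $\eta'$ (the backward endpoint of $w$), the intersection point has footpoint on the geodesic from $\eta'$ to $\xi$, and $\sigma$ measures exactly how far one must flow $w$ so that its backward horosphere through the relevant point matches up.

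First I would reduce the bound on $\sigma$ to a statement purely about Busemann functions: flowing $w$ by time $t$ shifts the value of the Busemann function $B_{\eta', q}$ at a given point by exactly $t$ (since the geodesic flow moves horospheres at unit speed along their normal geodesics). Thus $|\sigma(v,w)|$ equals, up to identifying the right base points, the difference $|B_{\eta', q}(z) - B_{\eta', q}(q)|$ where $z$ is the footpoint of $[v,w]$ — or more precisely the signed displacement along the common normal geodesic needed to bring the two horospheres into the product configuration. Since Busemann functions are $1$-Lipschitz, this is bounded by $d(q, z)$, and $z$ lies on $W^{su}(\phi^\sigma w) \cap W^{ss}(v)$, so it suffices to control $d(q,z)$ (and symmetrically $d(p,z)$) by $d(v,w)$. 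The cleanest route is: $d(p,z) \le d_{ss}(v, [v,w])$ and $d(q, z)$ is comparable to $d_{su}(\phi^\sigma[v,w], w)$, and both are within a horospherical-distance comparison of $d(v,w)$; but to get the \emph{sharp} constant $1$ (i.e. $|\sigma| < \delta$ rather than $|\sigma| \le C_0 \delta$) one should argue directly with the hyperbolic geometry of the configuration rather than invoking the general $C_0$ bound.

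The key steps, in order, would be: (1) lift to $\tilde M$ and set up the two horospheres $H_{\xi, p}$ and $H_{\eta', q}$ whose normal geodesics through a common point $z$ realize $[v,w]$; (2) observe that $\sigma(v,w)$ is the unique $t$ such that the geodesic ray from $z$ toward $\eta'$, reparametrized, agrees with $\phi^t w$ — i.e. $\sigma$ is the difference of Busemann function values $B_{\eta', q}(z)$; (3) bound $|B_{\eta', q}(z)| \le d(q, z)$ by the $1$-Lipschitz property; (4) show $d(q,z) < \delta$ using that $z$ lies on the geodesic $(\eta', \xi)$ which is uniformly close (in the negatively curved, hence Gromov-hyperbolic, sense) to both the geodesic of $v$ and the geodesic of $w$, and that $w$ itself has footpoint within $d(v,w) < \delta$ of $p$ — so in fact $z$ is trapped within distance roughly $d(v,w)$ of $q$ along this geodesic; (5) combine to get $|\sigma(v,w)| \le d(q,z) < \delta$. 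One may need a symmetric estimate using $d(p,z) < \delta$ and take the better of the two, or a short convexity argument for the distance function between the two geodesics to nail down that the displacement is strictly less than $\delta$.

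The main obstacle I anticipate is step (4): getting the \emph{sharp} constant, i.e. $|\sigma| < \delta$ with no multiplicative loss, rather than the weaker $|\sigma| \le C_0(\delta)\, d(v,w)$ that comes for free from the general local product structure. This requires exploiting the specific horospherical geometry — convexity of Busemann functions and the fact that in nonpositive curvature the nearest-point projection and the $1$-Lipschitz Busemann estimate lose nothing — rather than the soft dynamical statement. I would expect the argument to hinge on the observation that $\sigma(v,w)$ is literally $B_{\eta,p}(q)$ (or a Busemann-function value of that form) evaluated at a point within distance $d(v,w)$ of the center, which immediately gives $|\sigma(v,w)| \le d_M(p,q) \le d(v,w) < \delta$ by \eqref{sasaki} and the $1$-Lipschitz property, with no constant at all. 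Confirming that this clean identification is correct — that the temporal component of the bracket is exactly such a Busemann value — is the crux.
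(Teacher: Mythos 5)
Your opening moves are right, and you correctly identify the key reduction: $\sigma(v,w)$ equals the Busemann-function increment $B_{\eta,q}(p')$, where $p'$ is the footpoint of $[v,w]$ (your $z$) and $\eta$ is the backward boundary point of $w$, so $|\sigma| \le d(q,p')$ by the $1$-Lipschitz property. But you then stall at exactly the step you flag as the obstacle: bounding $d(q,p')$ by $d(v,w)$ \emph{without a constant}. That route cannot give the sharp constant $1$, because controlling how far $p' = W^{ss}(v) \cap \phi^{\mathbb R}W^{su}(w)$ is from $q$ is precisely a local-product-structure estimate and generically carries a multiplicative $C_0 \ge 1$. Your closing hedge — that $\sigma(v,w)$ might ``literally'' be a Busemann value of the form $B_{\eta,p}(q)$ — is not correct: take $w \in W^{ss}(v)$, so $\sigma(v,w)=0$, while $B_{\eta,p}(q)$ measures the height difference of $p$ and $q$ on horospheres centered at $\eta$, which is generically nonzero (since $p,q$ lie on a common $\xi$-horosphere but not on a common $\eta$-horosphere). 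So there is no one-Busemann formula for $\sigma$ of the type you propose, and the identification you hoped would be the crux does not hold.

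The paper's proof sidesteps the problem by introducing \emph{both} intersection points along the bracket orbit: $p' \in H_{\xi,p}$ and $q' \in H_{\eta,q}$, both on the geodesic $(\eta,\xi)$. Then $|\sigma| = d(p',q')$ exactly (both Busemann functions flow at unit speed along that geodesic), and — this is the step your proposal is missing — the segment $[p',q']$ is orthogonal to $H_{\xi,p}$ at $p'$ and to $H_{\eta,q}$ at $q'$, i.e.\ it is a common perpendicular of the two horospheres. In nonpositive curvature a common perpendicular between two disjoint convex sets (here the horoballs) realizes the infimum of the distance between them. Since $p \in H_{\xi,p}$ and $q \in H_{\eta,q}$, this gives $|\sigma| = d(p',q') \le d(p,q) \le d(v,w) < \delta$ with no constant at all. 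So the sharp constant comes not from a pointwise Busemann identity at $q$, but from a minimization property of the two horoballs that never requires estimating $d(q,p')$ individually. If you want to salvage your route, the fix is to compare $p'$ to the \emph{second} special point $q'$ rather than to $q$, and then invoke the common-perpendicular-minimizes-distance fact.
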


\begin{proof}
Let $p$ and $q$ denote the footpoints of $v$ and $w$ respectively. Then by (\ref{sasaki}), we know $d(p, q) < \delta$.
Let $\xi$ denote the forward boundary point of $v$ and let $\eta$ denote the backward boundary point of $w$. 
Let $p' \in H_{\xi, p}$ and $q' \in H_{\eta, q}$ be points on the geodesic through $\eta$ and $\xi$. 
Then $d(p', q')  = |\sigma(v, w)|$. Moreover, since the geodesic segment through $p'$ and $q'$ is orthogonal to both $H_{\xi, p}$ and $H_{\eta, q}$, it minimizes the distance between these horospheres. In other words, $|\sigma(v, w)| = d(p', q') \leq d(p, q) < \eps$. 
\end{proof}

This allows us to deduce the following key lemma, which was used in the proof of Proposition \ref{ncoding}.  

\begin{lem}\label{timesclose}
Consider the geodesic flow $\phi^t$ on the universal cover $T^1 \tilde M$. 
Suppose $d(v,w) < \delta_1$ and $d(\phi^s v, \phi^t w) < \delta_2$. Then $|s - t| < \delta_1 + \delta_2$. 
\end{lem}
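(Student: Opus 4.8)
The plan is to reduce the statement to Lemma \ref{temporalbd} together with the cocycle/additivity properties of the temporal function $\sigma$ and the fact that points on the same orbit whose images stay close must be at comparable times. The key observation is that the hypothesis $d(\phi^s v, \phi^t w) < \delta_2$, combined with $d(v,w) < \delta_1$, constrains the ``time drift'' between the two orbits. Concretely, since $v$ and $w$ are $\delta_1$-close, Lemma \ref{temporalbd} gives $|\sigma(v,w)| < \delta_1$; that is, $w$ lies (up to flowing by a time $<\delta_1$) in the local product of the stable manifold of $v$ and the unstable manifold of $v$. In the geometric picture on $T^1\tilde M$, the forward boundary point $\xi$ of $v$ and of $[v,w]$ agree, and the backward boundary point $\eta$ of $w$ and of $[v,w]$ agree, so the orbit of $w$ is, after an $O(\delta_1)$ time shift, asymptotic to the orbit of $v$ in forward time.

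First I would set up coordinates: work in the universal cover $T^1\tilde M$, let $\xi$ be the forward endpoint of $v$ and $\eta$ the backward endpoint of $w$, and let $\ell$ be the bi-infinite geodesic through $\eta$ and $\xi$. Parametrize $\ell$ so that footpoints of $v$ and $w$ project to it, with the projection of $v$ at parameter $0$ and the projection of $w$ at parameter $-\sigma(v,w)$ (the sign bookkeeping being exactly as in the proof of Lemma \ref{temporalbd}, where $|\sigma(v,w)| = d(p',q')$). Now flowing $v$ by time $s$ moves its projection on $\ell$ to parameter approximately $s$, and flowing $w$ by time $t$ moves its projection to approximately $-\sigma(v,w) + t$; here the error in ``approximately'' is controlled because the footpoint of $\phi^s v$ is within $\delta_1$ of its projection to $\ell$ (again by the horosphere-orthogonality argument of Lemma \ref{temporalbd}, applied now to $\phi^s v$ and its asymptotic orbit, or simply because $\sigma(\phi^s v, \phi^t w)$ is what we want to estimate). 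Second, I would apply Lemma \ref{temporalbd} to the pair $\phi^s v, \phi^t w$: since $d(\phi^s v, \phi^t w) < \delta_2$, we get $|\sigma(\phi^s v, \phi^t w)| < \delta_2$. Third, I would use the cocycle identity for $\sigma$ along the flow — namely $\sigma(\phi^s v, \phi^t w) = t - s + \sigma(v,w)$ for $v,w$ on the given pair of asymptotic orbits (this is just additivity of displacement along $\ell$, since both $v$ and $w$ have forward endpoint $\xi$ once we have flowed into the local product) — to conclude $|t - s + \sigma(v,w)| < \delta_2$. Combining with $|\sigma(v,w)| < \delta_1$ gives $|s - t| \leq |t - s + \sigma(v,w)| + |\sigma(v,w)| < \delta_1 + \delta_2$.

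The main obstacle I anticipate is making the cocycle identity $\sigma(\phi^s v, \phi^t w) = (t - s) + \sigma(v,w)$ precise and honest: it requires knowing that $[v,w]$ has the same forward endpoint as $v$ (so that the stable direction is ``invisible'' to the temporal displacement along $\ell$) and the same backward endpoint as $w$, and that flowing respects this decomposition. One has to be slightly careful that $\sigma$ is defined via $[v,w] = W^{ss}(v) \cap W^{su}(\phi^{\sigma(v,w)}w)$, so the relevant orbit of $w$ is really the orbit of $\phi^{\sigma(v,w)}w$, and the bookkeeping of which orbit gets flowed by $s$ versus $t$ must match the sign conventions in Lemma \ref{temporalbd}. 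A clean way to sidestep heavy computation is to note that $d(p',q') $ in the proof of Lemma \ref{temporalbd} is exactly the unsigned gap between the two orthogonal horospheres along the common perpendicular geodesic, and that this gap changes by exactly $|(s) - (t)|$ up to the initial offset $|\sigma(v,w)|$ when we flow — i.e., to phrase the whole argument in terms of signed arclength along $\ell$ rather than in terms of the abstract cocycle, which makes the triangle inequality $|s-t| < \delta_1 + \delta_2$ transparent. I expect the write-up to be short, essentially a two-line consequence of Lemma \ref{temporalbd} plus this arclength bookkeeping.
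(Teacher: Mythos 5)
Your proposal is correct and follows essentially the same route as the paper: the key points are applying Lemma \ref{temporalbd} to both pairs $(v,w)$ and $(\phi^s v, \phi^t w)$ and then invoking the additivity of the temporal function $\sigma$ to relate $s-t$ to $\sigma(\phi^s v,\phi^t w) - \sigma(v,w)$. The paper makes the additivity precise in one line by observing that $[\phi^s v,\phi^s w]=[\phi^s v,\phi^t w]$ (since the bracket depends on the second argument only through its orbit) and that $\sigma(\phi^s v,\phi^s w)=\sigma(v,w)$ by flow-invariance, which yields $\sigma(\phi^s v,\phi^t w)=s-t+\sigma(v,w)$ (note your sketch has the opposite sign $t-s$, but this is immaterial for the absolute-value bound), after which the estimate is exactly the triangle inequality you describe.
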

 
\begin{proof}
Since $[\phi^s v, \phi^s w] = [\phi^s v, \phi^t w]$, we have
 \[ \phi^{\sigma(\phi^s v, \phi^s w)} \phi^s w = \phi^{\sigma(\phi^s v, \phi^t w)} \phi^t w. \]
Thus $\sigma(\phi^s v, \phi^s w) + s = \sigma(\phi^s v, \phi^t w) + t$. Rearranging gives 
\[ s - t = \sigma(\phi^s v, \phi^t w) - \sigma(\phi^s v, \phi^s w) = \sigma(\phi^s v, \phi^t w) - \sigma(v,w). \] By Lemma \ref{temporalbd}, the absolute value of the right hand side is bounded above by $\delta_1 + \delta_2$, which completes the proof. 
\end{proof}

Now assume $(M, g)$ has sectional curvatures between $-\Lambda^2$ and $-\lambda^2$. 
We will show the constant $C_0$ in the definition of local product structure can be taken to depend only on $\lambda$ and $\Lambda$, whereas \emph{a priori} it depends on the metric $g$. 
For our purposes, it will suffice to show the following proposition, which is formulated using the Sasaki distance $d$ between vectors in $T^1 M$ instead of the stable/unstable distances $d_{ss}$ and $d_{su}$ between vectors on the same horosphere. In fact, we will show later (Lemma \ref{horcompsas}) that the Sasaki distance $d$ between vectors on the same stable/unstable manifold is comparable to $d_{ss}$ and $d_{su}$, respectively.

\begin{prop}\label{localprodconst}
Suppose $(M, g)$ has sectional curvatures between $-\Lambda^2$ and $-\lambda^2$. 
Let $u \in T_p^1 \tilde M$.
Let $u_1 \in W^{ss}(u)$ and $u_2 \in W^{su}(u)$ with $d(u_1, u_2) \leq D$.
Let $d$ denote the distance in the Sasaki metric. Then there exists a constant $C_0 = C_0(\lambda, \Lambda, D)$ so that $d(u, u_i) \leq C_0 d(u_1, u_2)$.
\end{prop}

Our proof of Proposition \ref{localprodconst} relies on the geometry of horospheres, and we use many of the methods and results from the paper \cite{heintzehof} of the same title.
However, we additionally consider the Sasaki distances between unit tangent vectors in $T^1 \tilde M$ instead of just distances between points in $\tilde M$. 

Let $\xi \in \partial \tilde M$ and let $B = B_{\xi}$ be the associated Busemann function. Suppose $p \in \tilde M$ is such that $B(p) = 0$.
Let $v \in T_p^1 \tilde M$ perpendicular to ${\rm grad} B(p)$ and consider the geodesic $\gamma(s) = \exp_p(sv)$. 
Define $f(s) = B(\gamma(s))$. 
This is the distance from $\gamma(s)$ to the zero set of $B$. 
Moreover, $f'(s) = \langle {\rm grad}B, \gamma' \rangle = \cos \theta$, where $\theta$ is the angle between $\gamma'(s)$ and ${\rm grad} B (\gamma(s))$. 
In particular, $f'(0) = 0$.

\begin{lem}\label{Bupper}
For all $s \in \R$ we have $f(s) \leq \frac{\Lambda}{2} s^2$ and $\cos \theta(s) = f'(s) \leq \Lambda s$. 
\end{lem}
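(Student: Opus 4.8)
The plan is to bound $f$ and $f' = \cos\theta$ by comparing the Busemann function restricted to the geodesic $\gamma(s) = \exp_p(sv)$ with its second-order Taylor data at $s=0$, using the upper curvature bound $-\Lambda^2$. Concretely, since $f(0) = B(\gamma(0)) = B(p) = 0$ and $f'(0) = \langle {\rm grad}B(p), v\rangle = 0$ (as $v \perp {\rm grad}B(p)$ by hypothesis), it suffices to control $f''$. I would compute $f''(s) = \frac{d}{ds}\langle {\rm grad}B(\gamma(s)), \gamma'(s)\rangle = \langle \nabla_{\gamma'}{\rm grad}B, \gamma'\rangle = {\rm Hess}\, B(\gamma'(s), \gamma'(s))$, the last equality using that $\gamma$ is a geodesic so $\nabla_{\gamma'}\gamma' = 0$. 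Thus the whole problem reduces to a uniform upper bound on the Hessian of the Busemann function.

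The key geometric input is the standard fact (see e.g.\ \cite{heintzehof}, or \cite{ballmann}) that the level sets of $B_\xi$ are the horospheres centered at $\xi$, and ${\rm Hess}\, B_\xi$ at a point is exactly the second fundamental form of the horosphere through that point (with respect to the unit normal ${\rm grad}B_\xi$). In a manifold with sectional curvature $\geq -\Lambda^2$, the comparison theory for the Riccati equation governing the stable Jacobi fields — equivalently, the shape operator $U(t)$ of the horospheres satisfies $U' + U^2 + R = 0$ along a geodesic, and curvature pinching $-\Lambda^2 \le K \le -\lambda^2$ forces $\lambda\, \mathrm{Id} \le U \le \Lambda\, \mathrm{Id}$ — gives ${\rm Hess}\, B_\xi(w,w) \le \Lambda \|w\|^2$ for every tangent vector $w$. (This is precisely where $-\Lambda^2$ enters; the lower bound $-\lambda^2$ is not needed for this lemma but is available.) Since $\gamma'(s)$ is a unit vector, we get $f''(s) = {\rm Hess}\,B(\gamma'(s),\gamma'(s)) \le \Lambda$ for all $s$.

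Given $f(0)=f'(0)=0$ and $f''(s)\le \Lambda$, integrating twice yields $f'(s) = \int_0^s f''(u)\,du \le \Lambda s$ for $s \ge 0$, which is the claimed bound $\cos\theta(s) = f'(s) \le \Lambda s$; and $f(s) = \int_0^s f'(u)\,du \le \int_0^s \Lambda u\, du = \frac{\Lambda}{2}s^2$, the first claim. (For the statement ``for all $s\in\R$'', note $f(s)\ge 0$ always since $B$ restricted to a geodesic emanating tangent to a horosphere attains its minimum value $0$ at $s=0$ by convexity of $B$; and the bound $f(s)\le \frac{\Lambda}{2}s^2$ for $s<0$ follows symmetrically, or one simply records the lemma for $s\ge 0$ as that is all that is used. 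Alternatively, convexity of $B$ gives $f'$ nondecreasing, so the $s\ge 0$ bounds transfer.)

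The main obstacle — really the only substantive point — is justifying the Hessian bound ${\rm Hess}\, B_\xi \le \Lambda\,\mathrm{Id}$ with a constant depending \emph{only} on $\Lambda$ (and not on the metric $g$ in a hidden way), which is exactly the kind of care the paper is emphasizing throughout. The cleanest route is to invoke the Riccati comparison for the stable solution: the shape operator of the horosphere is the limiting shape operator of geodesic spheres of radius $r\to\infty$, and the Rauch/Riccati comparison against the constant-curvature model $-\Lambda^2$ bounds it above by $\Lambda\coth(\Lambda r) \to \Lambda$. This limiting argument is standard and the resulting bound is manifestly curvature-universal, so the constants in the conclusion depend only on $\Lambda$ as required. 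Everything else is the elementary double integration above.
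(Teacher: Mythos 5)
Your proposal is correct and follows essentially the same route as the paper: reduce to a uniform bound $f''(s)\le\Lambda$ by identifying $f''$ with the Hessian of the Busemann function (equivalently the second fundamental form of the horosphere, or the stable Jacobi field data), invoke the curvature comparison to get the constant $\Lambda$, and integrate twice. The paper phrases the Hessian bound via a stable Jacobi field estimate $\Vert J'(0)\Vert\le\Lambda\Vert J(0)\Vert$ from Ballmann rather than your Riccati comparison for the shape operator, but these are the same comparison fact stated in different language.
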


\begin{proof}
We have $f''(s) = \langle \nabla_{\gamma'} {\rm grad} B , \gamma' \rangle = \langle \nabla_{\gamma'_T} {\rm grad} B , \gamma'_T \rangle$, where $\gamma'_T$ denotes the component of $\gamma'$ which is tangent to the horosphere through $\xi$ and $\gamma(s)$. 
Note $\Vert \gamma'_T \Vert = \sin (\theta)$, where as before, $\theta$ is the angle between $\gamma'(s)$ and ${\rm grad} B (\gamma(s))$.

Thus $f''(s) = \langle J'(0), J(0) \rangle$, where $J$ is the stable Jacobi field along the geodesic through $\gamma(s)$ and $\xi$ with $J(0) = \gamma'_T(s)$.  (See, for instance, \cite[p.750--751]{BCGGAFA}.)
By \cite[Proposition IV.2.9 ii)]{ballmann}, we have $\Vert J'(0) \Vert \leq \Lambda \Vert J(0) \Vert$, which shows $f''(s) \leq \Lambda \Vert J(0) \Vert^2 \leq \Lambda$.  
Since $f(0)$ and $f'(0)$ are both 0, Taylor's theorem implies that for any $s$, there is $\tilde s \in [0,s]$ so that $f(s) = \frac{f''(\tilde s)}{2} s^2$. Thus, $f(s) \leq \frac{\Lambda}{2} s^2$ for all $s \geq 0$. 
Moreover, since $f'(0) = 0$, integrating $f''(s)$ shows $\cos \theta = f'(s) \leq \Lambda s$. 
\end{proof}

\begin{rem}\label{BupperREM}
We have $f''(s) = \langle \nabla_{\gamma'} {\rm grad}B, \gamma' \rangle = {\rm Hess} B(\gamma', \gamma'),$ 
and in the above proof, we have in particular shown that ${\rm Hess} B(\gamma', \gamma') \leq \Lambda$. In other words, ${\rm Hess} B(u, u) \leq \Lambda$ for any unit vector $u$. We will use this estimate in the proof of Lemma \ref{proj}.
\end{rem}

\begin{lem}\label{Blower}
Fix $S > 0$. Then there is a constant $c = c(\lambda, S)$ such that for all $s \in [0, S]$ we have $f(s) \geq \frac{c}{2} s^2$ and $\cos \theta = f'(s) \geq cs$. 
\end{lem}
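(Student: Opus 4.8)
The plan is to mirror the proof of Lemma~\ref{Bupper}, but now using the \emph{lower} curvature bound $-\Lambda^2 \le K$ (recall the paper's sign convention: curvatures in $[-\Lambda^2, -\lambda^2]$, so $\lambda$ governs the \emph{shallowest} negative curvature and hence the lower estimate on the second derivative of $f$). As before, write $f''(s) = \langle J'(0), J(0)\rangle$ where $J$ is the stable Jacobi field along the geodesic through $\gamma(s)$ and $\xi$ with $J(0) = \gamma'_T(s)$, and $\|J(0)\| = \sin\theta(s)$. The companion estimate to \cite[Proposition IV.2.9 ii)]{ballmann} gives $\|J'(0)\| \ge \lambda \|J(0)\|$, and in fact (since $J'(0)$ and $J(0)$ are positively proportional for stable Jacobi fields under the curvature pinching, or at least make an angle bounded away from $\pi/2$) one gets $\langle J'(0), J(0)\rangle \ge \lambda \|J(0)\|^2 = \lambda \sin^2\theta(s)$. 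So the first step is to record $f''(s) \ge \lambda \sin^2\theta(s)$.

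The difficulty, and the reason a bare constant $c(\lambda)$ does not suffice and we must fix $S$, is that $\sin^2\theta(s)$ is not bounded below near $s = 0$: indeed $f'(0) = \cos\theta(0) = 0$ forces $\theta(0) = \pi/2$, which is good, but as $s$ grows $\theta$ decreases toward $0$ and $\sin\theta \to 0$, so $f''$ could in principle decay. To control this I would argue on $[0,S]$ as follows. From Lemma~\ref{Bupper} we have $\cos\theta(s) = f'(s) \le \Lambda s$, hence $\sin^2\theta(s) = 1 - f'(s)^2 \ge 1 - \Lambda^2 s^2$; this is useless once $s \ge 1/\Lambda$, so instead I would bootstrap. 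The key observation is that $\sin\theta(s)$ can only be small if $f(s)$ is already large: more precisely, since $\gamma$ leaves the horosphere transversally and the horospheres are convex, one has a geometric comparison showing $\cos\theta(s)$ stays bounded away from $1$ as long as $f(s)$ stays bounded — equivalently $\sin\theta(s) \ge \sin\theta(S') > 0$ for $s \le S'$ where $S'$ is controlled by $S$ and the pinching. Concretely, $\cos\theta(s) = f'(s)$ is nondecreasing (as $f'' \ge 0$), so $\cos\theta$ is monotone in $s$, hence $\sin\theta(s) \ge \sin\theta(S)$ for all $s \in [0,S]$; and $\sin\theta(S) > 0$ with a lower bound depending only on $\lambda$ and $S$ because an upper bound on $f(S)$ together with $f'' \ge \lambda\sin^2\theta$ prevents $\cos\theta(S)$ from reaching $1$. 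This circularity is resolved by a continuity/maximal-interval argument: let $S' \le S$ be maximal such that $\sin\theta \ge 1/2$ on $[0,S']$; on that interval $f'' \ge \lambda/4$, so $f(s) \ge \tfrac{\lambda}{8}s^2$ and $f'(s) \ge \tfrac{\lambda}{4}s$ there, and one checks $f'(S') = \cos\theta(S')$ cannot equal $\sqrt{3}/2$ unless $S'$ is bounded below by a constant $c_0(\lambda) > 0$; combining, one gets a uniform lower bound $\sin\theta(s) \ge \sin\theta(S) =: \sigma_0(\lambda, S) > 0$ on all of $[0,S]$, using monotonicity of $\cos\theta$.

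With that in hand the conclusion is immediate: on $[0,S]$ we have $f''(s) \ge \lambda \sigma_0(\lambda,S)^2 =: c = c(\lambda, S)$, and then Taylor's theorem with $f(0) = f'(0) = 0$ gives $f(s) = \tfrac{f''(\tilde s)}{2}s^2 \ge \tfrac{c}{2}s^2$ for some $\tilde s \in [0,s]$, while integrating $f'' \ge c$ from $0$ gives $\cos\theta = f'(s) \ge cs$, exactly the two claimed inequalities. I expect the genuine obstacle to be pinning down the uniform lower bound on $\sin\theta$ on all of $[0,S]$ — i.e., making the maximal-interval bootstrap rigorous and extracting a constant depending only on $\lambda$ and $S$ (not on the metric) — since everything else is the same Jacobi-field-plus-Taylor computation as in Lemma~\ref{Bupper}. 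An alternative, possibly cleaner route for that step is to invoke the Rauch comparison theorem directly against the constant-curvature $-\lambda^2$ model: stable Jacobi fields in curvature $\ge -\lambda^2$ decay no faster than $e^{-\lambda t}$, which yields $\langle J'(0), J(0)\rangle \ge \lambda\|J(0)\|^2$ and, comparing the whole geodesic to a horocycle in the $\lambda^2$-pinched model, gives $f(s) \ge \tfrac1\lambda(\cosh(\lambda s) - 1) \ge \tfrac{\lambda}{2}s^2$ and $f'(s) = \sinh(\lambda s)/\cdots \ge \lambda s$ type bounds on a fixed interval; I would present whichever of these two is shorter, noting that the comparison-geometry version automatically gives the $S$-dependence through the range of validity of the model computation.
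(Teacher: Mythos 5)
Your first route (working directly in $M$ and trying to bound $\sin\theta(S)$ from below by a bootstrap) is conceptually right about where the difficulty lies, but the maximal-interval argument as sketched does not close: you show $S' \geq c_0$ where $S'$ is the last time $\sin\theta \geq 1/2$, but when $S' < S$ this says nothing about $\sin\theta(S)$ itself, which is what actually enters the bound $f'' \geq \lambda\sin^2\theta(S)$. A clean way to rescue this line is an ODE comparison: from Lemma~\ref{Bupper}, $(\cos\theta)' = f'' \leq \Lambda\sin^2\theta = \Lambda(1-\cos^2\theta)$, and comparing with $u' = \Lambda(1-u^2)$, $u(0)=0$ gives $\cos\theta(s) \leq \tanh(\Lambda s)$, hence $\sin\theta(s) \geq \operatorname{sech}(\Lambda S)$ on $[0,S]$. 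But note this yields a constant depending on $\Lambda$ as well as $\lambda$ and $S$, which is formally weaker than the lemma's stated $c(\lambda,S)$.

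The paper instead transfers the problem to the constant-curvature $-\lambda^2$ model \emph{first}: by \cite[Lemma 4.2]{heintzehof} (a comparison-triangle argument) one has $f(s) \geq f_\lambda(s)$ and $f'(s) \geq f'_\lambda(s)$, and then the entire estimate is carried out for $f_\lambda$, where $\theta_\lambda(S)$ is an explicit quantity depending only on $\lambda$ and $S$, giving $f''_\lambda \geq \lambda\sin^2\theta_\lambda(S) =: c(\lambda,S)$ and the conclusion by Taylor and integration. Your closing alternative gestures at exactly this, but the explicit formulas are off: in the $-\lambda^2$ model one has $f_\lambda(s) = \tfrac{1}{\lambda}\ln\cosh(\lambda s)$, which is $\leq \tfrac{\lambda}{2}s^2$, \emph{not} $\geq$; so the lemma cannot hold with $c = \lambda$. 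The genuine $S$-dependence of the constant (here $c = \lambda\operatorname{sech}^2(\lambda S)$) is unavoidable, and your proposal as written misses that — which is in fact the whole point of fixing $S$ in the hypothesis.
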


\begin{proof}
As in \cite[Section 4]{heintzehof}, we use $f_{\lambda}(s)$ to denote the analogue of the function $f(s)$, but defined in the space of constant curvature $-\lambda^2$. 
By considering the appropriate comparison triangles, it follows that $f(s) \geq f_{\lambda}(s)$ and $f'(s) \geq f_{\lambda}'(s)$ \cite[Lemma 4.2]{heintzehof}. 
As in the proof of the previous lemma, we know $f_{\lambda}''(s) = \langle J(0), J'(0) \rangle$, where $\Vert J(0) \Vert = \sin \theta$. Solving the Jacobi equation explicitly in constant curvature gives $f_{\lambda}''(s) = \lambda \sin^2 \theta$. 
For all $s \in [0, S]$, this is bounded below by $\lambda \sin^2 \theta(S)$, which is a constant depending only on the value of $S$ and the space of constant curvature $-\lambda^2$.  
In other words, there is a constant $c = c(\lambda, S)$ so that $f_{\lambda}''(s) \geq c$ for all $s \in [0, S]$. 
As in the proof of the previous lemma, Taylor's theorem then implies $f_{\lambda}(s) \geq \frac{c}{2} s^2$, 
and integrating $f_{\lambda}''$ on the interval $[0,s]$ gives $f_{\lambda}'(s) \geq cs$. 
\end{proof}

\begin{rem}
From the above proof it is evident that $f_{\lambda}'(s)/s \to 0$ as $s \to \infty$, and as such the only way to get a positive lower bound for $\cos \theta/s$ is to restrict to a compact interval $[0, S]$, which will turn out to be sufficient for our purposes. 
\end{rem}

For the proofs of the next several lemmas, we will consider the following setup (see Figure 1 below). Let $u$ be a unit tangent vector with footpoint $p$ .
Let $v \in T^1_p M$ perpendicular to $u$ and let $\gamma(t) = \exp_p(tv)$. 
Fix $s > 0$ and let $u_1 \in W^{ss}(u)$ be such that
such that the geodesic determined by $u_1$ passes through $\gamma(s)$. 
Let $p_1$ denote the footpoint of $u_1$. Let $\eta$ denote the geodesic segment joining $p$ and $p_1$ and let $\alpha$ denote the angle this segment makes with the vector $u$. 
Let $q$ be the orthogonal projection of $p_1$ onto the geodesic $\gamma$. 
Consider the geodesic right triangle with vertices $p_1, q, \gamma(s)$. Let $\theta$ denote the angle at $\gamma(s)$ and let $\theta_1$ denote the angle at $p_1$.

\begin{figure}[h] 
\includegraphics[width=4in]{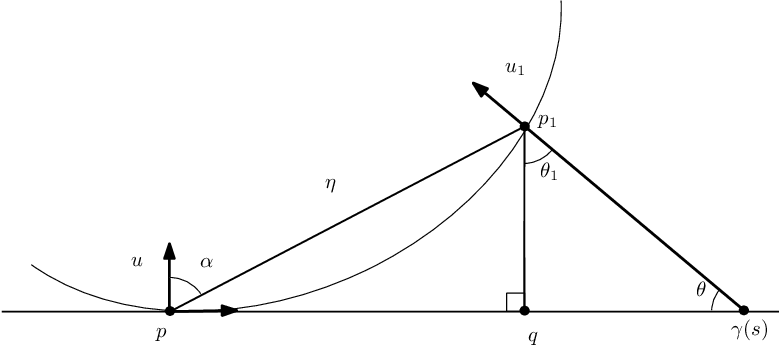}
\caption{}
\end{figure}

\begin{lem}\label{stableh}
Let $u_1 \in W^{ss}(u)$ as in Figure 1, and assume $s \leq S$ for some $S > 0$. 
Then there is a constant $C = C(\Lambda, S)$ so that
$d(u, u_1) \leq C s$. 
If $u_2 \in W^{su}(u)$, then $d(u, u_2) \leq Cs$ as well.
\end{lem}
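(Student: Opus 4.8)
The plan is to bound $d(u, u_1)$ by applying the triangle inequality in the Sasaki metric via \eqref{sasaki}, splitting into the footpoint contribution $d_M(p, p_1)$ and the angular contribution $d_{T^1_{p_1}M}(u', u_1)$, where $u'$ is the parallel transport of $u$ to $p_1$. The footpoint term is handled by the setup in Figure 1: the point $p_1$ lies on the geodesic through $\gamma(s)$ determined by $u_1 \in W^{ss}(u)$, so $d_M(p, p_1) = d(p, p_1)$ is controlled by the Heintze–Im Hof analysis. Specifically, $d(p, \gamma(s)) = s$ along $\gamma$, and $f(s) = B_\xi(\gamma(s)) \le \frac{\Lambda}{2}s^2$ by Lemma \ref{Bupper}, so $p_1$ (which sits on the horosphere $H_{\xi,p}$, at Busemann height $0$) is at controlled distance from $\gamma(s)$; combining gives $d(p, p_1) \le s + (\text{something like } \tfrac{\Lambda}{2}s^2) \le Cs$ on the compact range $s \le S$, where $C = C(\Lambda, \mathrm{diam}(M))$.

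For the angular term, I would observe that the angle $\alpha$ between $\eta$ (the geodesic from $p$ to $p_1$) and $u$, together with the angle between $u_1$ and $\eta$ at $p_1$, can each be bounded in terms of $s$. The key input is that $u$ is perpendicular to $v$ at $p$, hence perpendicular to the horosphere $H_{\xi,p}$, i.e., $u = -\mathrm{grad}\,B_\xi(p)$ (up to sign), and similarly $u_1 = -\mathrm{grad}\,B_\xi(p_1)$; both are inward normals to the horospheres of the \emph{same} foliation $W^{ss}(u)$. Since the horosphere foliation is $C^1$ with curvature controlled by $\Lambda$ (the second fundamental form of horospheres is bounded by $\Lambda$, cf.\ the Jacobi field estimate $\|J'(0)\| \le \Lambda\|J(0)\|$ used in Lemma \ref{Bupper}), the unit normal field varies Lipschitz-ly along the horosphere with constant $\le \Lambda$. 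Hence $d_{T^1_{p_1}M}(u', u_1) \le \Lambda \cdot h(p, p_1) \le \Lambda \cdot C' s$, using that the horospherical distance $h(p, p_1)$ is itself $\le C's$ on the range $s \le S$ (again via the comparison geometry: the projection $q$ of $p_1$ onto $\gamma$ satisfies $d(q, \gamma(s)) \le \Lambda s^2/2$-type bounds, and $h(p, p_1)$ is comparable to the chord, which is $O(s)$). Adding the two contributions gives $d(u, u_1) \le Cs$.

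For the second assertion, $u_2 \in W^{ss}(u)$ is an arbitrary vector on the same strong stable leaf; the only additional thing needed is to bound $s_2$, the "height" parameter for $u_2$ analogous to $s$ for $u_1$, in terms of $s$. But this is exactly where Hypothesis \ref{finiteS} and the diameter bound enter: since the footpoint of $u_2$ is within $\mathrm{diam}(M)$ of $p$ in $\tilde M$, the Heintze–Im Hof estimates (\cite[Theorem 4.6, Proposition 4.7]{heintzehof}) force $s_2 \le S$, and then the identical argument as above applied to $u_2$ gives $d(u, u_2) \le Cs_2 \le CS$, which is $\le C''s$ only if we also know $s$ is bounded below — but in fact the statement presumably intends $d(u,u_2) \le Cs$ with the understanding that $u_2$ is the \emph{specific} vector through $\gamma(s)$ for the matching parameter, or else the bound is simply $d(u,u_2) \le CS = C(\Lambda,\mathrm{diam}(M))$ absorbed into the constant; I would state it in whichever form the subsequent application (Proposition \ref{localprodconst}) requires.

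The main obstacle I anticipate is making the angular estimate $d_{T^1_{p_1}M}(u', u_1) \le \Lambda \cdot h(p,p_1)$ fully rigorous with a constant depending only on $\Lambda$ and $\mathrm{diam}(M)$ and not on $g$: one must integrate the second-fundamental-form bound for horospheres along a path in $H_{\xi,p}$ joining $p$ to $p_1$, tracking how the unit normal (which equals $-\mathrm{grad}\,B_\xi$) rotates, and then relate this to the Sasaki-intrinsic angular distance after parallel transport along $\eta$ rather than along the horosphere — these two transports differ, and controlling the discrepancy requires another curvature comparison (holonomy around the thin triangle $p, p_1, q$ is $O(\mathrm{area}) = O(s^2)$ by Gauss–Bonnet-type bounds). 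Keeping all of this uniform is the technical heart of the lemma; the Busemann-function estimates from Lemmas \ref{Bupper} and \ref{Blower} and the Heintze–Im Hof comparison results are precisely the tools that make it go through.
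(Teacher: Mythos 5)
Your footpoint estimate matches the paper's, but your angular estimate takes a genuinely different (and, as written, incomplete) route. The paper bounds $d_{T^1_{p_1}M}(Pu, u_1)$ by pure angle-chasing: using Lemma \ref{Bupper} to get $\cos\theta \leq \Lambda s$, the triangle angle-sum deficit in negative curvature to get $\theta_1 < \pi/2 - \theta \leq \frac{\pi}{2}\Lambda s$, and then a second angle-sum argument in the triangle $p, p_1, q$ to relate the angle between $Pu$ and $u_1$ to $\theta_1$. No Hessian integration, no horosphere transport, no holonomy.

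Your approach --- integrating the second-fundamental-form bound along a curve \emph{in the horosphere} and then correcting for the difference between parallel transport along the horosphere and along the geodesic $\eta$ --- has a genuine gap precisely at the correction step. You invoke ``holonomy around the thin triangle $p, p_1, q$ is $O(s^2)$ by Gauss--Bonnet-type bounds,'' but the relevant loop is not the geodesic triangle $p, p_1, q$: it is the bigon formed by $\eta$ and the horospherical curve from $p$ to $p_1$, and in dimension $\geq 3$ the statement ``holonomy is controlled by curvature times area'' requires filling the loop and running a nontrivial estimate that you do not supply. This is not a cosmetic omission; it is the load-bearing step in your version of the argument, and without it the two transports $u'$ and $\tilde N(p_1)$ simply aren't comparable.

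The good news is that your chosen path is harder than it needs to be. If instead you integrate $\nabla_{\eta'}\,\mathrm{grad}\,B$ along the geodesic $\eta$ itself --- letting $W(r)$ be the parallel transport of $u = -\mathrm{grad}\,B(p)$ along $\eta$ and estimating $\Vert W(r) - (-\mathrm{grad}\,B(\eta(r))) \Vert' \leq \Vert \nabla_{\eta'}\mathrm{grad}\,B \Vert \leq \Lambda$, hence $\Vert Pu - u_1 \Vert \leq \Lambda\, d(p,p_1)$ --- you obtain a linear bound directly, with no holonomy term at all. Combined with your footpoint estimate $d(p,p_1) \leq (1 + \Lambda S/2)\,s$, this gives the result. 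This is essentially what the paper proves separately as Lemma \ref{stableht}; the paper then uses its own angle-chasing argument for Lemma \ref{stableh}, but the Hessian-along-$\eta$ route would have worked there too. For the second assertion, your interpretation is right: $u_2$ comes with its own parameter $s_2 \leq s$, Hypothesis \ref{finiteS} keeps $s_2 \leq S$, and the identical argument gives $d(u, u_2) \leq Cs_2 \leq Cs$.
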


\begin{proof}
Consider the setup in Figure 1. 
Let $\eta$ denote the geodesic joining $p$ and $p_1$ and let $P_{\eta}: T_p M \to T_{p_1} M$ denote parallel transport along this geodesic. Recall
\[ d (u, u_1) \leq d_M (p, p_1) + d_{T^1_{p_1} M} (Pu, u_1).\]
To bound $d_M(p, p_1)$, we use the triangle inequality, together with Lemma \ref{Bupper}:
\begin{align*}
d(p, p_1) &\leq d(p, q) + d(p_1, q)
\leq s+ d(p_1, \gamma(s)) 
\leq s + \Lambda s^2/2
\leq (1 + \Lambda S/2)s.
\end{align*}

To bound $d_{T^1_{p_1} M} (Pu, u_1)$, we first find bounds for the angles $\theta$ and $\theta_1$. 
We know from Lemma \ref{Bupper} that $\sin (\pi/2 - \theta) = \cos \theta \leq \Lambda s$. 
Moreover, $\sin(\pi/2 - \theta) \geq (2/ \pi) (\pi/2 - \theta)$ for $0 \leq \pi/2 \leq \theta$. 
Since the interior angles of geodesic triangles in $M$ sum to less than $\pi$, we know $\theta + \theta_1 < \pi/2$. Thus, $\theta_1 < \pi/2 - \theta \leq (\pi/2) \Lambda s$. 

Now let $\alpha$ denote the angle between $u$ and $\eta'$ at the point $p$. Then $\alpha$ is also the angle between $Pu$ and $\eta'$ at the point $p_1$, since parallel transport is an isometry and $\eta'$ is a geodesic. 
Since the angle sum of the geodesic triangle with vertices $p$, $p_1$ and $q$ is less than $\pi$, the angle in $T_{p_1} M$ between $\eta'$ and $[q, p_1]$ is strictly less than $\alpha$. 
Thus if we rotate $\eta'$ towards $Pu$, we must pass through the tangent vector to $[q, p_1]$ along the way.
Hence $d_{T_{p_1} M} (Pu, u_1) < \theta_1 \leq (\pi/2) \Lambda s$, which completes the proof of the upper bound for $d(u, u_1)$. The estimate for $d(u, u_2)$ follows by an analogous argument. 
\end{proof}

\begin{lem}\label{stableht}
Let $u \in T_p^1M$. Let $u_1 \in W^{ss}(u)$ be such that the footpoints $p$ and $p_1$ of $u$ and $u_1$ are distance $t$ apart.
Then
$d(u, u_1) \leq (1 + \Lambda)t$.
\end{lem}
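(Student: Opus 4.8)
The plan is to reduce this to Lemma \ref{stableh} by choosing the auxiliary parameter $s$ in Figure 1 appropriately. Recall the setup: $u \in T^1_p \tilde M$, and $u_1 \in W^{ss}(u)$ has footpoint $p_1$. The geodesic determined by $u_1$ and the geodesic determined by $u$ are forward asymptotic (they converge to the same boundary point $\xi \in \partial \tilde M$), and both footpoints lie on horospheres centered at $\xi$. The vector $v \perp u$ and the point $\gamma(s) = \exp_p(sv)$ from Figure 1 parametrize which stable leaf vector we are looking at: as $s$ ranges over $[0,\infty)$, the point $\gamma(s)$ sweeps out the horosphere $H_{\xi,p}$ through $p$, and the corresponding $u_1 = -\mathrm{grad}\, B_{\xi}(\gamma(s))$ is the generic element of $W^{ss}(u)$. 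So given our $u_1$ with $d_M(p,p_1) = t$, there is a unique $s \geq 0$ with $\gamma(s)$ the relevant point on $H_{\xi,p}$.

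First I would observe that $s \leq d_M(p, \gamma(s)) \leq d_M(p,p_1) + d_M(p_1,\gamma(s))$, but more directly: the geodesic segment $[p,\gamma(s)]$ has length $s$ and by convexity of Busemann functions / the geometry of horospheres, $s$ is controlled by $d_M(p,p_1)$. In fact the cleanest route is: $p_1$ and $\gamma(s)$ lie on the same geodesic ray to $\xi$, and $q$ (the orthogonal projection of $p_1$ onto $\gamma$) satisfies $d_M(p,q) \leq s$ and $d_M(q,p_1) \leq d_M(\gamma(s),p_1) = f(s)$. Hence $d_M(p,p_1) \geq d_M(p,q) - $ nothing useful directly; instead use that $p, q, p_1$ form a right triangle with legs $d_M(p,q)$ and $d_M(q,p_1)$, so $d_M(p,p_1) \geq d_M(p,q)$. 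Combined with the fact (from the proof of Lemma \ref{stableh}, via Lemma \ref{Bupper}) that $d_M(p,q) \geq s - d_M(q,\gamma(s)) = s - $ something, this only gives $s$ bounded when $s$ is already small. Since we are not assuming $d_M(p,p_1) \leq \mathrm{diam}(M)$ here, we cannot simply invoke Hypothesis \ref{finiteS}. So I expect the honest statement is: one runs the \emph{same} triangle estimate as in Lemma \ref{stableh} but keeps the dependence on $s$ explicit instead of absorbing it into $S$, and then re-expresses everything in terms of $t = d_M(p,p_1)$ rather than $s$.

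Concretely, from the Busemann comparison one has $d_M(p,q) \leq s$ and also $d_M(p,p_1)^2 = d_M(p,q)^2 + d_M(q,p_1)^2$ in the comparison space, giving $d_M(p,q) \leq t$ and $d_M(q,p_1) \leq t$; moreover $d_M(q,p_1) \leq d_M(\gamma(s),p_1) = f(s) \leq \tfrac{\Lambda}{2}s^2$. The angular part of the Sasaki distance: $d_{T^1_{p_1}\tilde M}(P_\eta u, u_1) < \theta_1$, and from $\cos\theta \leq \Lambda s$ together with $\theta + \theta_1 < \pi/2$ we get $\theta_1 < (\pi/2)\Lambda s$. Putting $d(u,u_1) \leq d_M(p,p_1) + \theta_1 \leq t + (\pi/2)\Lambda s$, so it remains to bound $s$ by $t$ (up to the constant $1$). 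The key geometric input is that the orthogonal projection $p \mapsto q$ from $p_1$ onto $\gamma$, combined with $[p,q] \subset H_{\xi,p}$-geometry, yields $s \leq d_M(p,p_1) = t$ directly — indeed $q$ lies on $\gamma$ between $p$ and $\gamma(s)$ is \emph{not} generally true, but $d_M(p,q) \geq d_M(p,\gamma(s)) - d_M(\gamma(s),q) $ is the wrong direction; rather, one uses that the geodesic from $p_1$ to $\xi$ stays in the horoball and $\gamma(s)$ is its entry point into $H_{\xi,p}$, so $d_M(p,\gamma(s)) \le d_M(p,p_1)$ because $p \in H_{\xi,p}$ and $\gamma(s)$ is the \emph{nearest} point of $H_{\xi,p}$ to... — this needs care. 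The main obstacle, then, is precisely this last step: showing $s \leq t$ (hence $d(u,u_1) \le t + (\pi/2)\Lambda s \le (1+\Lambda)t$ after also using the crude bound $\pi/2 < 1 + $ slack, or more carefully tracking that the footpoint contribution is $\le t$ and the angular contribution is $\le \Lambda t$). I would establish $s \le t$ by a Busemann-function argument: $B_\xi(\gamma(s)) = f(s) > 0 = B_\xi(p)$ forces, by the $1$-Lipschitz property and strict convexity of $B_\xi$ along the geodesic $[p, p_1]$ extended toward... — the cleanest is that $s = d_M(p,\gamma(s))$ and $\gamma(s)$ is the point where the ray $p_1\xi$ meets $H_{\xi,p}$, while $p$ is also on $H_{\xi,p}$; since horospheres are convex and $p_1$ lies outside the horoball $\{B_\xi \le 0\}$ — wait, $p_1 \in H_{\xi,p_1}$, a \emph{different} horosphere — one compares: $d_M(p_1, H_{\xi,p}) = |B_\xi(p_1)| = f(s)$ achieved at $\gamma(s)$, and $d_M(p_1,p) \geq d_M(p_1, \gamma(s)) $, which again is the wrong direction. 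The resolution is to instead bound $s$ using Lemma \ref{Blower}-type convexity in reverse, or simply to note that in negative curvature $d_M(p,p_1) \geq $ (length of the horospherical projection path), and the horospherical distance dominates $s$ up to bounded multiplicative error — but since the claimed constant $(1+\Lambda)$ is clean and $S$-independent, I suspect the intended argument is the short one: $s \le d_M(p, \gamma(s))$, and $d_M(p,\gamma(s)) \le d_M(p,p_1)$ because $\gamma(s)$ is the foot of the perpendicular from $p_1$ to... no. I would resolve this by the direct estimate $d_M(p,q) \le s$ and $q$ between endpoints giving $t = d_M(p,p_1) \ge \sqrt{d_M(p,q)^2 + d_M(q,p_1)^2} \ge d_M(p,q)$, \emph{and} separately $s \le d_M(p,q) + d_M(q,\gamma(s))$ is useless, so instead: parametrize so $\gamma(s) = q$, i.e., choose $u_1$ via its \emph{footpoint's projection}, not via $\gamma(s)$ — then $s = d_M(p,q) \le t$ immediately and $f(s) = d_M(p_1,q) \le t$, whence $\cos\theta \le \Lambda s \le \Lambda t$, $\theta_1 < (\pi/2)\Lambda t$... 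This reparametrization is, I believe, the actual content, and the lemma then follows with $d(u,u_1) \le t + \Lambda t = (1+\Lambda)t$ after bounding the angular term by $\Lambda t$ rather than $(\pi/2)\Lambda t$ via the sharper estimate $\theta_1 \le \tan\theta_1 \le \Lambda s / \text{(something} \ge 1)$ or by absorbing the $\pi/2$ into a re-examination of which $s$ one uses.
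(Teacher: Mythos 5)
There is a genuine gap: your plan to reduce to Lemma \ref{stableh} via the Figure 1 parametrization never closes, and you candidly flag this yourself several times (``wrong direction,'' ``this needs care,'' ``useless''). The crux of your difficulty — relating the auxiliary parameter $s$ to the footpoint distance $t$ while preserving the meaning of $f(s)$ — is a real obstruction, not an oversight on your part. In particular, your proposed ``reparametrize so $\gamma(s) = q$'' fix breaks the definition of $f$: the function $f(s) = B(\gamma(s))$ equals $d(\gamma(s), p_1)$ only when $\gamma(s)$ lies on the geodesic from $p_1$ to $\xi$, which $q$ generically does not. You also correctly notice that the Lemma \ref{stableh} route carries an extra factor of $\pi/2$ in the angular estimate, which would spoil the clean $(1+\Lambda)t$ bound, and the ``re-examination of which $s$ one uses'' step you invoke to remove it is not substantiated.

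The paper's proof sidesteps the Figure 1 setup entirely. Work along the geodesic $\eta$ from $p$ to $p_1$ rather than the transversal $\gamma$. Let $P_\eta$ denote parallel transport along $\eta$ and pick $v_0 \in T^1_pM$ in the plane spanned by $u$ and $\eta'(0)$ with $\langle u, v_0\rangle = 0$ and $\langle \eta'(0), v_0\rangle > 0$; let $V(s)$ be the parallel extension of $v_0$ along $\eta$. Letting $\theta(s)$ be the angle between $V(s)$ and $-\mathrm{grad}\,B(\eta(s))$, the angle $\theta_1$ between $u_1$ and $P_\eta u$ is $\pi/2 - \theta(t)$. Then
\[
\sin\theta_1 = \cos\theta(t) = \langle V(t), -\mathrm{grad}\,B(\eta(t))\rangle = \int_0^t \langle V(s), \nabla_V \,\mathrm{grad}\, B(\eta(s))\rangle\, ds,
\]
and the integrand is the Hessian of $B$ evaluated on a pair of unit vectors, hence bounded by $\Lambda$ exactly as in Lemma \ref{Bupper}. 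So $\theta_1 \leq \Lambda t$, and by the Sasaki inequality (\ref{sasaki}), $d(u,u_1) \leq d_M(p,p_1) + \theta_1 \leq (1+\Lambda)t$. This avoids any comparison between $s$ and $t$, gives the angular bound $\Lambda t$ directly (no $\pi/2$ to absorb), and makes no use of Hypothesis \ref{finiteS}.
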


\begin{proof}
Let $\eta$ denote the geodesic joining $p$ and $p_1$. 
Let $P_{\eta}: T_p M \to T_{p_1} M$ denote parallel transport along $\eta$.
Let $v_0 \in T_p^1M$ be the vector contained in the plane spanned by $u$ and $\eta'(0)$ so that $\langle u, v_0 \rangle = 0$ and $\langle \eta'(0), v_0 \rangle > 0$. 
Let $V(s)$ denote the parallel vector field along $\eta(s)$ with initial value $V(0) = v_0$. 
Let $\theta(s)$ be the angle between $V(s)$ and $- {\rm grad} B(\eta(s))$.  
Then $\theta_1 = \pi/2 - \theta(t)$ is the angle between $u_1$ and $P_{\eta} u$. 
We have 
\[ \sin(\pi/2 - \theta) = \cos(\theta) = \langle  V(t), - {\rm grad} B(\eta(t)) \rangle = \int_0^t \langle V(s), \nabla_{V} {\rm grad} B (\eta(s)) \rangle \, ds. \]
By the same argument as in the proof of Lemma \ref{Bupper}, this integral is bounded above by $\Lambda t$. 
Hence, $d(u, u_1) \leq d_M(p,p_1) + d_{T_{p_1} M}(P_{\eta}u, u_1) \leq t + \Lambda t$. 
\end{proof}

\begin{proof}[Proof of Proposition \ref{localprodconst}]
It suffices to show the statement for $i = 1$.
Consider the hypersurface formed by taking the exponential image of ${\rm grad}B(p)^{\perp}$.
Let $s_0 = s_0(\Lambda) > 0$ sufficiently small so that $1 - \Lambda s_0^2 \geq 1/4$ and  $1 - \Lambda s_0 \geq 1/4$.
We will consider two separate cases. 

The first case is where the geodesic determined by $u_1$ intersects the above hypersurface and the point of intersection is distance at most $s_0$ away from $p$. 
Let $x$ denote the point on this hypersurface which is on the geodesic determined by $u_1$. 
Let $v_1 \in T_p^1 M$ perpendicular to $u$ such that $\gamma(s) := \exp_p(s v_1) = x$, where $s = d(p, x_1) \leq s_0$.
By Lemma \ref{stableh}, we have $d(u, u_1) \leq Cs$ for some $C = C(\Lambda, {\rm diam}(M))$. So it suffices to bound $d(u_1, u_2)/s$ from below by some constant depending only on the desired parameters. 
To do so, let $p_1$ and $p_2$ denote the footpoints of $u_1$ and $u_2$, and let $q_1, q_2$ denote the orthogonal projections of $p_1, p_2$ onto the tangent plane ${\rm grad} B(p)^{\perp}$. 
We now note that $D \geq d(u_1, u_2) \geq d(p_1, p_2) \geq d(p_1, q_1)$.

To bound $d(p_1, q_1)$ from below, we consider a comparison right triangle in the space of constant curvature $- \lambda^2$ with hypotenuse equal to $d(\gamma(s), p_1) = f(s)$ and angle $\theta$ equal to the angle between ${\rm grad} B$ and $\gamma'$ at the point $\gamma(s) = x$. Let $l$ denote the length of the side opposite to the angle $\theta$. 
Then, \cite[Theorem 7.11.2 ii)]{beardon} and Lemma \ref{Blower} give
\[ \sinh(d(p_1, q_1)) \geq \sinh(l) = \sin(\theta (s)) \sinh( f(s) ) \geq \sin ( \theta(s_0)) \sinh(cs^2). \]
To bound $\sin (\theta(s_0))$ from below, we use the upper bound for $\cos \theta$ from Lemma \ref{Bupper}; this gives $\sin^2(\theta(s_0)) \geq 1 - \Lambda s_0^2 \geq 1/4$ by choice of $s_0$. 
This completes the proof in this case.

We now proceed to the second case.
Here, we have $d(p, q_1) \geq s_0 - \Lambda s_0^2 \geq 1/4 s_0$. 
Moreoever, $d(p_1, q_1) \geq c s_0^2$ by the proof of the first case.
Since $M$ is negatively curved, and hence CAT(0), we have $t_1:= d(p, p_1) \geq \sqrt{(1/4 s_0)^2 + (c s_0^2)^2}$, ie, $t_1 \geq c_1$ for some $c_1 = c_1(\Lambda)$. 
To relate $d(u_1, u_2)$ and $d(u_1, u)$, we will, as before, use that $d(u_1, u_2) \geq d(p_1, q_1)$. 
Since by Lemma \ref{stableht}, $(1 + \Lambda)t_1 \geq d(u, u_1)$, it suffices to bound from below the ratio $d(p_1, q_1)/t_1$.

For this, let $\beta$ denote the angle between this hypersurface and the geodesic $\eta(t)$ joining the footpoints of $u$ and $u_1$. 
Consider a comparison right triangle in the space of constant curvature $- \lambda^2$ with hypotenuse equal to $t_1$ and angle $\beta$.
Let $l$ denote the length of the side opposite to the angle $\beta$. Then, using the fact that triangles in $M$ are thinner than this comparison triangle, together with \cite[Theorem 7.11.2 ii)]{beardon}, gives
\[
\sinh(d(p_1, q_1)) \geq \sinh(l) = \sin(\beta) \sinh(t_1). 
\]
Since for all $t \in [0, D]$ we have $t \leq \sinh(t) \leq Ct$ for some $C = C(D)$, it remains to bound $\sin \beta$ from below.

Let $P_t (u)$ denote the parallel transport along the geodesic $\eta$ of $u$ from $p$ to $\eta(t)$. 
We then have
\begin{equation}
\cos \beta = \langle P_{t_0} (u), {\rm grad} B(\eta(t_1)) \rangle = 1 + \int_0^{t_1} \langle P_{t_1} (u), \nabla_{\eta'(t_1)} {\rm grad} B(\eta(t_1)) \rangle
\end{equation} 
By \cite[Corollary 4.2]{BrinKarcher}, the integrand is uniformly bounded in absolute value by some constant $C = C(\lambda, \Lambda)$. This shows $\sin \beta \geq C t_1 \geq C c_1(\Lambda)$, which completes the proof.
\end{proof}

Proposition \ref{localprodconst} allows us to deduce the following refinement of the Anosov Closing Lemma, where we can say the constants involved depend only on concrete geometric information about $(M, g)$, namely the diameter and the sectional curvature bounds. 
Note that now the setting is $T^1 M$ as opposed to the universal cover $T^1 \tilde M$.

\begin{lem}\label{ACL}
Fix $\delta > 0$ and suppose $v, \phi^t v \in T^1 M$ are such that $d(v, \phi^t v) < \delta$. 
Then either $v$ and $\phi^t v$ are on the same local flow line or there is $w$ with $d(v,w) < C \delta$ so that $w$ is tangent to a closed geodesic of length $t' \in [t - C \delta, t + C \delta]$, 
where $C$ is a constant depending \emph{only} on the sectional curvature bounds $\lambda$ and $\Lambda$. 
\end{lem}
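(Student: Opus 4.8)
The plan is to run the classical Anosov closing lemma argument, but to track carefully where the constants come from and invoke Proposition \ref{localprodconst} (together with the global product structure on the universal cover and the bound on the temporal function from Lemma \ref{temporalbd}) at each step so that all implicit constants depend only on $\lambda$, $\Lambda$, and ${\rm diam}(M)$. First I would lift the situation to the universal cover: choose $\tilde v \in T^1 \tilde M$ projecting to $v$, and let $\tilde w = \phi^t \tilde v$. Since $v$ and $\phi^t v$ are $\delta$-close in $T^1 M$ with $\delta \leq \delta_0$ small, there is a deck transformation $g \in \Gamma$ with $d(g \tilde v, \tilde w) < \delta$; if $g$ is trivial (or more precisely if $\tilde v$ and $\tilde w$ are on the same orbit) we are in the ``same local flow line'' alternative and there is nothing to prove. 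Otherwise, using the \emph{global} product structure of the geodesic flow on $T^1 \tilde M$ (valid on all of $T^1 \tilde M$, as recalled before Lemma \ref{temporalbd}), I form the bracket-type point: there is a time $\sigma = \sigma(g\tilde v, \tilde w)$ with $|\sigma| < \delta$ (Lemma \ref{temporalbd}) and a point $z \in W^{ss}(g \tilde v) \cap W^{su}(\phi^{\sigma} \tilde w)$, with $d_{ss}(g\tilde v, z)$ and $d_{su}(\phi^\sigma \tilde w, z)$ bounded by $C_0 \delta$ where $C_0 = C_0(\lambda, \Lambda, {\rm diam}(M))$ comes from Proposition \ref{localprodconst} (via the comparison, Lemma \ref{horcompsas}, between $d_{ss}$, $d_{su}$ and the Sasaki distance — this is the step that pins down the constant).

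Next I would iterate the flow-and-bracket construction to manufacture a genuinely $g$-periodic point. The standard move is to consider the map that takes a point on the local unstable manifold of $g\tilde v$, flows it by (approximately) $t$, and projects back along stable leaves into a fixed local transversal; the hyperbolicity contraction (stable leaves contracted by a definite factor under $\phi^t$, unstable under $\phi^{-t}$, with rates governed by $\lambda \leq \Lambda$) makes this a contraction on the transversal, so it has a fixed point $\tilde w_0$. This $\tilde w_0$ satisfies $g \phi^{t'} \tilde w_0 = \tilde w_0$ for some $t'$, i.e. it projects to a vector $w \in T^1 M$ tangent to a closed geodesic of period $t'$ in the free homotopy class determined by $g$. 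The point $\tilde w_0$ is produced within distance $C\delta$ of $g\tilde v$ (hence $w$ within $C\delta$ of $v$), with $C = C(\lambda, \Lambda, {\rm diam}(M))$, because each application of the product-structure bracket contributes an error controlled by $C_0$ and the geometric series of contraction factors converges with ratio bounded away from $1$ in terms of $\lambda, \Lambda, {\rm diam}(M)$ alone (the relevant flow time here is comparable to $\delta$-independent quantities, or one truncates at the first return, so only finitely many — boundedly many — bracket steps are needed). For the length estimate, $|t' - t| < C\delta$ follows from Lemma \ref{timesclose}: the closing-up time $t'$ and the original time $t$ differ only by the accumulated temporal displacements $\sigma$, each of size $< \delta$ by Lemma \ref{temporalbd}, and the number of such displacements is bounded in terms of the fixed data.

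The main obstacle I anticipate is the bookkeeping needed to keep every constant \emph{metric-independent}: the contraction rates of $\phi^t$ along stable/unstable leaves are controlled by $\lambda$ and $\Lambda$ via the Jacobi field estimates (as in Lemmas \ref{Bupper} and \ref{Blower}), but turning these infinitesimal rates into a uniform estimate for how far $\tilde w_0$ is from $g\tilde v$ — and how close its period is to $t$ — requires that the local product structure constant $C_0$ from Proposition \ref{localprodconst} and the comparison constant of Lemma \ref{horcompsas} be the only nontrivial inputs, and that the ``size of a local transversal on which the contraction argument runs'' be bounded below uniformly (again via ${\rm diam}(M)$ and curvature). I would state this last point as the reason $\delta_0$ must be chosen small depending on $\lambda, \Lambda, {\rm diam}(M)$: once $\delta < \delta_0$, the bracket points all lie in the region where Proposition \ref{localprodconst} applies with its uniform $C_0$, and the contraction estimate closes up without ever referencing the particular metric $g$. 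The passage ${\rm diam}(M) \rightsquigarrow i_M$ is then handled exactly as in the remark after Proposition \ref{localprodconst}, via Gromov's volume bound and the standard diameter estimate.
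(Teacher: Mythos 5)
Your proposal takes essentially the same approach as the paper: invoke the classical Anosov closing lemma proof and observe that the only nontrivial input to the constant $C$ is the local product structure constant $C_0$, which is then controlled by Proposition \ref{localprodconst}. The paper's proof is a one-line citation to Franke and Bowen together with this observation, whereas you sketch the classical bracket-and-contraction argument in more detail, but the key idea (reducing the constant dependence to $C_0$) is identical.
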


\begin{proof}
The proof of the usual Anosov Closing Lemma in \cite[Figure 2]{frankelACL} (see also \cite[3.6, 3.8]{bowenbook}) shows the constant $C$ depends only on the local product structure constant $C_0$. By Proposition \ref{localprodconst}, we know this depends only on $\lambda$ and $\Lambda$. 
\end{proof}

\section{Covering lemma}\label{coversection}

In this section, we prove the following covering lemma, which was one of the key statements we used in the proof of the main theorem. 

\begin{customlem}{\ref{coverr}}
There is 
small enough $\delta_0 = \delta_0(n) > 0$, 
together with 
a constant $C =  C(n, \Gamma, \lambda, \Lambda)$
so that
for any $\delta \leq \delta_0$, there is a covering of $T^ 1 M$
by at most $C / \delta^{2n+1}$ $\delta$-rectangles.
\end{customlem}

We start with a preliminary lemma. 
\begin{lem}\label{sasball}
Let $B(v, \delta)$ be a ball of radius $\delta$ in $T^1 M$ with respect to the Sasaki metric. 
There is small enough $\delta_0$, depending only on the dimension $n$, so that for all $\delta < \delta_0$ we have
${\rm vol} (B(v, \delta)) \geq c \delta^{2n + 1}$
for some constant $c = c(n)$. 
\end{lem}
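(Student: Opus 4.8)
\textbf{Proof proposal for Lemma \ref{sasball}.}

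The plan is to obtain a uniform lower bound on the volume of a small Sasaki ball by comparing $(T^1M, g^S)$ locally with Euclidean space $\R^{2n+1}$ and controlling the comparison constants using only the dimension. The key point is that the Sasaki metric on $T^1M$ is built from the Riemannian metric $g$, its Levi-Civita connection, and its curvature tensor, and that the relevant curvature quantities of $g^S$ can be estimated in terms of the curvature of $g$ (which is pinched in $[-\Lambda^2,-\lambda^2]$) and its derivatives. However, since we only want a bound depending on $n$, I would avoid invoking curvature bounds of $g^S$ directly and instead argue as follows.

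First, I would work in a coordinate chart. Fix $v \in T^1M$ with footpoint $p$. Choose normal coordinates for $g$ centered at $p$ on a ball of radius $\rho = \rho(\lambda,\Lambda)$ (the harmonic/normal radius is controlled by the curvature bounds, but in fact for the purposes of this lemma it suffices to work at scales below the injectivity radius, and then the statement of the lemma as written — with $\delta_0$ depending only on $n$ — should be understood with this implicit scale normalization, or one rescales so that the relevant neighborhood looks Euclidean up to a factor $2$). In such coordinates, together with the induced bundle coordinates on $T^1_qM \cong S^{n-1}$, the Sasaki metric $g^S$ at the center point $v$ is the standard product $g_{\mathrm{eucl}} \oplus g_{S^{n-1}}$ on $\R^n \times S^{n-1}$, and nearby it is comparable: there is a universal neighborhood on which $\tfrac{1}{2} g^S_{\mathrm{eucl}} \leq g^S \leq 2 g^S_{\mathrm{eucl}}$, where $g^S_{\mathrm{eucl}}$ denotes this model metric. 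The size of this neighborhood is bounded below once the metric $g$ is $C^1$-controlled at unit scale; after the reduction to $i_M$ made in the remark following Proposition \ref{localprodconst} (bounding diameter, hence via standard estimates the geometry at a definite scale), this is legitimate.

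Given such a comparison, a Sasaki ball $B(v,\delta)$ contains a $g^S_{\mathrm{eucl}}$-ball of radius $\delta/2$ for all $\delta$ below the universal scale $\delta_0(n)$. The model space $\R^n \times S^{n-1}$ is a smooth $(2n+1)$-dimensional Riemannian manifold, so a metric ball of radius $r$ in it has volume at least $c(n) r^{2n+1}$ for $r$ small (this is just the standard fact that small metric balls in a fixed Riemannian manifold have volume comparable to the Euclidean one, with the constant depending only on the local geometry of the model, which here depends only on $n$). Hence ${\rm vol}(B(v,\delta)) \geq {\rm vol}_{g^S}(B_{\mathrm{eucl}}(v,\delta/2)) \geq \tfrac{1}{2^{2n+1}}\,{\rm vol}_{g^S_{\mathrm{eucl}}}(B_{\mathrm{eucl}}(v,\delta/2)) \geq c'(n)\,\delta^{2n+1}$, as desired.

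The main obstacle is bookkeeping the claim that the comparison $\tfrac12 g^S_{\mathrm{eucl}} \le g^S \le 2 g^S_{\mathrm{eucl}}$ holds on a neighborhood whose size depends only on $n$. Strictly speaking the neighborhood size depends on the $C^1$-size of $g$ in normal coordinates, i.e.\ on curvature bounds and injectivity radius; the cleanest route is to first fix the scale using $\lambda,\Lambda,i_M$ and then observe that the \emph{lower volume bound constant} $c(n)$ that emerges is nonetheless purely dimensional, since below that fixed scale the unit-renormalized metric is uniformly close to the flat model. I would phrase the final statement accordingly, noting that $\delta_0$ may be taken to depend only on $n$ after this normalization, while acknowledging in a remark that unwinding the normalization reintroduces the dependence on $\lambda,\Lambda,i_M$ already present throughout the paper. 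The volume bound itself — the quantity actually used in the covering argument — has a constant depending only on $n$.
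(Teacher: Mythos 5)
The paper's proof takes a different and more economical route that sidesteps the coordinate-comparison issue you yourself flag as the weak point of your argument. Rather than comparing $g^S$ to a flat model in coordinates, the paper uses the purely metric inequality (\ref{sasaki}), $d(v,w) \leq d_M(p,q) + d_{T^1_qM}(v',w)$, to conclude immediately that $B(v,\delta)$ contains the ``product'' set of vectors $w$ whose footpoint $q$ lies in $B_M(p,\delta/2)$ and which satisfy $d_{T^1_qM}(v',w) < \delta/2$, where $v'$ is the parallel transport of $v$ to $q$. Since the Sasaki volume (Liouville measure) factors, this reduces the problem to two separate volume bounds: for the base ball, G{\"u}nther's comparison theorem (Theorem 3.101 ii) in \cite{gallot}) gives ${\rm vol}(B_M(p,\delta/2)) \geq \beta_n(\delta/2)^n$ \emph{using only the sign of the curvature} — no curvature magnitude bounds enter; for the fiber, $T^1_qM$ with its induced metric is a genuine round $S^{n-1}$, independent of $g$, and the Taylor expansion of its ball volume (Theorem 3.98 in \cite{gallot}) has coefficients depending only on $n$.

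The gap in your proposal is real and you have correctly identified it: the uniform two-sided comparison $\tfrac12 g^S_{\mathrm{eucl}} \leq g^S \leq 2 g^S_{\mathrm{eucl}}$ in normal coordinates holds on a neighborhood whose size is controlled by the Christoffel symbols, hence by the curvature bounds $\lambda,\Lambda$, not by $n$ alone. The ``renormalization'' you describe does not remove this dependence; it only relabels it, and your final paragraph essentially concedes that $\delta_0$ should be allowed to depend on $\lambda,\Lambda,i_M$. The paper's argument achieves the cleaner dependence precisely because it never needs to flatten the metric: the base estimate is a one-sided comparison that costs nothing beyond nonpositive curvature, and the fiber is literally a fixed model space. (For completeness, note that even the paper's argument implicitly needs $\delta/2 < i_M$ for G{\"u}nther's theorem to apply, so its claim that $\delta_0$ depends only on $n$ is itself slightly loose — but this is harmless in the application to Lemma~\ref{coverr}, where $\delta_0$ is already constrained by $\lambda,\Lambda,i_M$.)
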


\begin{proof}
First we claim $B(v, \delta) \supset B_M(p, \delta/2) \times B_{S^{n-1}}(v, \delta/2)$, where $B_M(p, \delta/2)$ is a ball of radius $\delta/2$ in $M$ and $B_{S^{n-1}}(v, \delta/2)$ is a ball of radius $\delta/2$ in the unit tangent sphere $T_p^1 M$. 
This follows immediately from (\ref{sasaki}). 
Since $M$ is negatively curved, Theorem 3.101 ii) in \cite{gallot} implies ${\rm vol} B_M(p, \delta/2) \geq \beta_n \delta^n /2^n$, where $\beta_n$ is the volume of the unit ball in $\R^n$. 
By Theorem 3.98 in \cite{gallot}, we have ${\rm vol} B_{S^{n-1}}(v, \delta/2) = \frac{\beta_{n-1}  \delta^{n-1}}{2^{n-1}} (1 - \frac{n-1}{6(n+1)} \delta^2 + o(\delta^4))$.
Then for $\delta$ less than some small enough $\delta_0$, we can write 
\[ B_{S^{n-1}}(v, \delta/2) \geq \frac{\beta_{n-1} \delta^{n-1}}{2^{n-1}}  \left( 1 - 2 \frac{n-1}{6(n+1)} \delta^2 \right) \geq c \delta^{n+1}, \]
for some $c = c(n)$.
The quantity $\delta_0$ depends only on the coefficients of the Taylor expansion of ${\rm vol} B_{S^{n-1}}(v, \delta/2)$, which depend only on the geometry of $S^{n-1}$. So we can say $\delta_0$ depends only on $n$. 
Therefore, the volume of the Sasaki ball $B(v, \delta)$ is bounded below by $c \delta^{2n + 1}$ for some other constant $c = c(n)$ depending only on $n$.
\end{proof}

\begin{proof}[Proof of Lemma \ref{coverr}]
Let $C$ as in Proposition \ref{localprodconst} and $\delta_0$ as in Lemma \ref{sasball}. Let $c = 1/C$ and let $\delta < \delta_0/2c$. 
Let $v_1, \dots, v_m$ be a maximal $c \delta$-separated set in $T^1 M$ with respect to the Sasaki metric. 
We claim that the balls $B(v_1, c \delta), \dots , B(v_m, c \delta)$ cover $T^1 M$.
If not, there is some $v$ such that $d(v, v_i) \geq c \delta$ for all $i$. This contradicts the fact that $v_1, \dots, v_m$ was chosen to be a \emph{maximal} $c \delta$-separated set. 

This implies that the rectangles $R(v_1, \delta) \dots R(v_m, \delta)$ cover $T^1 M$ as well. 
Indeed, let $w \in B(v, c \delta)$. Then by Lemma \ref{temporalbd} there is a time $\sigma = \sigma(v,w) < c \delta$ and a point $[v, w] \in T^1 M$ so that $[v,w] = W^{ss}(v) \cap W^{su}(\phi^{\sigma} w)$. Thus $d(v, \phi^{\sigma} w) \leq \delta_0$ and Proposition \ref{localprodconst} implies $d_ss(v, [v,w]), d_{su}([v,w], \phi^{\sigma} w) < C c\delta = \delta$ as desired. 

Now we estimate $m$. 
Since $v_1, \dots, v_m$ if $c \delta$-separated, it follows that for $i \neq j$ we have
$B(v_i, c \, \delta/2) \cap B(v_j, c \, \delta/2) = \emptyset$. 
Hence  
\[ m \inf_i {\rm vol} (B(v_i, c \delta/2)) \leq {\rm vol}(T^1 M) = {\rm vol}(S^{n-1}) {\rm vol}(M). \] 
By  \cite[0.3 Thurston's Theorem]{gromov1982volume}, we have ${\rm vol}(M)$ is bounded above by a constant depending only on $n$, $\Gamma$ and the upper sectional curvature bound $-\lambda^2$. 
This, together with
Lemma \ref{sasball}, gives $m \leq C / \delta^{2n + 1}$ for some constant $C = C(n, \Gamma, \lambda, \Lambda )$.
 \end{proof}

\section{H{\"o}lder estimate}\label{holdersection}

In this section, we prove Proposition \ref{holder}, which was one of the main ingredients in the proof of Theorem \ref{mainthm}. 
In light of the methods in \cite[Section 2.3]{butt22}, it suffices to show the following statement.

\begin{prop}\label{holder2}
Suppose $(M, g)$ and $(N, g_0)$ are closed Riemannian manifolds with sectional curvatures in the interval $[-\Lambda^2, -\lambda^2]$.
Suppose there is $A \geq 1$ such that $f: (M, g) \to (N, g_0)$ and $h: (N, g_0) \to (M, g)$ are $A$-Lipschitz homotopy equivalences with $f \circ h$ homotopic to the identity.

Then there exists an orbit equivalence of geodesic flows $\mathcal{F}: T^1 M \to T^1 N$ which is $C^1$ along orbits and transversally H{\"o}lder continuous. 
More precisely, there is a small enough $\delta_0 = \delta_0(\lambda, \Lambda)$, together with a constant $C$, depending only on $\lambda$, $\Lambda$, ${\rm diam}(M)$, ${\rm diam}(N)$, $A$,
so that 
\begin{enumerate}
\item $d (\mathcal{F}(v), \mathcal{F}(\phi^t v)) \leq At$ for all $v \in T^1 \tilde M$ and all $t \in \R$,
\item\label{holdexp} $d(\mathcal{F}(v), \mathcal{F}(w)) \leq C d(v,w)^{A^{-1} \lambda/\Lambda}$ for all $v, w \in T^1 \tilde M$ with $d(v, w) < \delta_0$.
\end{enumerate}
 \end{prop}

\begin{rem}
In \cite[Theorem 2.38]{butt22}, we show the H{\"o}lder exponent $A^{-1} \lambda/ \Lambda$ in part \ref{holdexp} can be replaced with $\inf_{\gamma \in \Gamma} \frac{\mathcal{L}_g(\gamma)}{\mathcal{L}_{g_0}(f_* \gamma)} \frac{\lambda}{\Lambda}$. This yields Proposition \ref{holder}. 
\end{rem}


We first show that the homotopy equivalence $f: \tilde M \to \tilde N$ is a quasi-isometry with controlled constants. 

\begin{lem}\label{lem:QI}
Let $(M, g)$ and $(N, g_0)$ be closed negatively curved Riemannian manifolds, and suppose there is $A \geq 1$ such that $f: (M, g) \to (N, g_0)$ and $h: (N, g_0) \to (M, g)$ are $A$-Lipschitz homotopy equivalences with $f \circ h$ homotopic to the identity.
Let $\tilde f: \tilde M \to \tilde N$ be a lift of $f$.
Then there is a constant $B = B(A, {\rm diam}(M), {\rm diam}(N))$ such that 
\[
A^{-1} \, d_g(x_1, x_2) - B \leq d_{g_0}(\tilde f(x_1), \tilde f(x_2)) \leq A \, d_g (x_1, x_2).
\]
for all $x_1, x_2 \in \tilde M$
\end{lem}

\begin{proof}

Given that $f$ is $A$-Lipschitz, we only need to show the first inequality. We follow the approach of \cite[Proposition C.12]{benedetti12}, but we need to show $B$ depends only on the desired parameters.

First, consider the following fundamental domain $D_M$ for the action of $\Gamma$ on $\tilde M$ (see \cite[Proposition C.1.3]{benedetti12}). 
Fix $p \in \tilde M$. 
Let 
\begin{equation}\label{dirfundom}
D_M = \{ x \in \tilde M \, | \, d(x, p) \leq d(x, \gamma.p) \, \forall \gamma \in \Gamma \}.
\end{equation}
We claim the diameter of $D_M$ is bounded above by $2 \, {\rm diam}(M)$.
Indeed, let $x \in \tilde M$ so that $d(p, x) > {\rm diam}(M)$. This means there is some $\gamma \in \Gamma$ so that $d(x, \gamma.p) < d(x,p)$. 
In other words, any geodesic in $\tilde M$ starting at $p$ stays in $D_M$ for a time of at most ${\rm diam}(M)$.  
So if $x_1, x_2 \in D_M$ so that $d(x_1,x_2) = {\rm diam}(D_M)$, then $d(x_1, x_2) \leq d(x_1, p) + d(x_2, p) \leq 2 \, {\rm diam}(M)$, which proves the claim. 

Next we claim that for all $x \in \tilde M$, we have $d(h\circ f(x), x) \leq 2 (1 + A^2) {\rm diam}(M)$.
Indeed, since $h$ and $f$ are both continuous and $\Gamma$-equivariant, so is $h \circ f$,
and thus it suffices to check the statement for $x$ in a compact fundamental domain $D_M$. 
Since $f$ and $h$ are $A$-Lipschitz, it follows that the function $x \mapsto d(h \circ f(x), x)$ is $(1 + A^2)$-Lipschitz:
\begin{align*}
|d(h \circ f (x), x) - d(h \circ f(y), y)| \leq d(h \circ f(x), h \circ f(y)) + d(x, y) \leq (1 + A^2) d(x,y).
\end{align*}
Noting $d(x,y) \leq D_M \leq 2 \, {\rm diam}(M)$ proves the claim.

To complete the proof, we can now use the argument in \cite{benedetti12} verbatim. 
By the previous claim, we obtain
\[ d(h(f(x_1)), h(f(x_2)) \geq d(x_1, x_2) - 4 (1 + A^2) {\rm diam}(M). \]
Then, the Lipschitz bounds for $f$ and $h$ give
\[ d(f(x_1), f(x_2)) \geq A^{-1} d(h \circ f (x_1), h \circ f(x_2)) \geq A^{-1} \big(d(x_1, x_2) - 4 (1 + A^2) {\rm diam}(M)\big),\]
which completes the proof.  
\end{proof}

We will repeatedly use the following fact about quasi-geodesics remaining bounded distance away from their corresponding geodesics:

\begin{lem}[Theorem III.H.1.7 of \cite{BH13nonpos}]\label{Morselemma}
Let $\tilde f$ be the quasi-isometry from Lemma \ref{lem:QI}.
Let $c(t)$ be any geodesic in $\tilde M$ and let $\eta$ be its corresponding geodesic in $\tilde N$ obtained from the boundary map $\overline{f}: \partial \tilde M \to \partial \tilde N$. 
Then there is a constant $R$, depending only on the quasi-isometry constants $A$ and $B$ of $\tilde f$ and the upper sectional curvature bound $-\lambda^2$ for $N$, 
so that 
$d(\tilde f(c(t)), P_{\eta}(\tilde f(c(t))) \leq R$ for any $t \in \R$.  
\end{lem}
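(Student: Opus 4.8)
The plan is to recognize the statement as the classical stability of quasi-geodesics (the ``Morse lemma'') in a Gromov-hyperbolic space, applied to the quasi-geodesic $f \circ c$, followed by the elementary observation that for the nearest-point projection onto a geodesic in a Hadamard manifold one has $d(x, P_{\eta}(x)) = d(x, \eta)$. The only real work is to extract the dependence of the Hausdorff-distance constant on $A$, $B$, and $\lambda$ alone.

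First I would verify that $f \circ c$ is a quasi-geodesic in $\tilde N$ with constants depending only on $A$ and $B$. Parametrizing $c$ by arc length, for $s, t \in \R$ we have $d(c(s), c(t)) = |s - t|$, so the pseudo-isometry bound (\ref{pseudoisom}) gives
\[
A^{-1}|s - t| - B \;\leq\; d(f(c(s)), f(c(t))) \;\leq\; A\,|s - t|,
\]
i.e.\ $f \circ c \colon \R \to \tilde N$ is an $(A, B)$-quasi-isometric embedding of the line.

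Next I would record that $\tilde N$, being a complete simply connected manifold of sectional curvature $\le -\lambda^2$, is a $\mathrm{CAT}(-\lambda^2)$ space, hence $\delta$-hyperbolic with a thinness constant $\delta = \delta(\lambda)$ obtained by comparison with the model surface of curvature $-\lambda^2$ (the lower curvature bound plays no role here). The stability theorem for quasi-geodesics, \cite[Theorem III.H.1.7]{BH13nonpos} together with its standard extension to bi-infinite quasi-geodesics, then produces a constant $R = R(\delta, A, B) = R(\lambda, A, B)$ such that the image of $f \circ c$ lies in the $R$-neighborhood of the unique geodesic line with the same pair of endpoints at infinity. Since those endpoints are $\overline f(c(-\infty))$ and $\overline f(c(+\infty))$, this geodesic is precisely the $\eta$ of the statement, and hence $d(f(c(t)), \eta) \le R$ for every $t \in \R$.

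Finally, $\eta$ is a complete geodesic, hence a closed convex subset of the Hadamard manifold $\tilde N$, so the orthogonal projection $P_{\eta}$ is well defined and $P_{\eta}(x)$ realizes the distance $d(x, \eta)$; therefore $d(f(c(t)), P_{\eta}(f(c(t)))) = d(f(c(t)), \eta) \le R$, as claimed. I do not anticipate a genuine difficulty here: the one point requiring care — and the reason the lemma is isolated — is the bookkeeping of constants, namely invoking the quasi-geodesic stability theorem in a form in which $R$ is an explicit function of $\delta$ and the quasi-isometry data, and then substituting $\delta = \delta(\lambda)$ from the curvature upper bound. Combined with Proposition \ref{fpseudoisom}, which controls $A$ and $B$, this delivers $R$ with the advertised dependence on $A$, $B$, and $\lambda$.
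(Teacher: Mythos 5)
Your proposal is correct and matches the paper's treatment: the paper gives no separate argument but simply invokes \cite[Theorem III.H.1.7]{BH13nonpos}, exactly as you do, with $\tilde N$ being $\delta$-hyperbolic for $\delta = \delta(\lambda)$ from the upper curvature bound, $f \circ c$ an $(A,B)$-quasi-geodesic by (\ref{pseudoisom}), and the endpoints at infinity matching those of $\eta$ by the definition of $\overline f$. Your added bookkeeping (extension to bi-infinite quasi-geodesics, nearest-point projection realizing $d(\cdot,\eta)$) is the standard justification the paper leaves implicit.
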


To prove Proposition \ref{holder}, we will take $\mathcal{F}$ to be the map in \cite{gromov3rmks}, whose construction we now recall. 
Let $\eta$ be a bi-infinite geodesic in $\tilde M$ and let $\zeta = \overline{f} (\eta)$ be the corresponding geodesic in $\tilde N$, where $\overline{f} : \partial^2 \tilde M \to \partial^2 \tilde N$ is obtained from extending the quasi-isometry $\tilde f: \tilde M \to \tilde N$ to a map $\partial \tilde M \to \partial \tilde N$.
 
Let $P_{\zeta}: \tilde N \to \zeta$ denote the orthogonal projection. Note this projection is $\Gamma$-equivariant, i.e., $\gamma P_{\zeta} (x) =  P_{\gamma \zeta} (\gamma x)$. 
If $(p, v) \in T^1 \tilde M$ is tangent to $\eta$, define ${\mathcal F}_0(p, v)$ to be the tangent vector to $\zeta$ at the point $P_{\zeta} \circ \tilde f (p)$. 
Thus ${\mathcal F}_0: T^1 \tilde M \to T^1 \tilde N$ is a $\Gamma$-equivariant map which sends geodesics to geodesics. 
As such, we can define a cocycle $b(t, v)$ to be the time which satisfies
\begin{equation}\label{F0}
{\mathcal F}_0 (\phi^t v) = \psi^{b(t,v)} {\mathcal F}_0(v).
\end{equation}

It is possible for a fiber of the orthogonal projection map to intersect the quasi-geodesic $\tilde f(\eta)$ in more than one point; thus, ${\mathcal F}_0$ is not necessarily injective.
In order to obtain an injective orbit equivalence, we follow the method in \cite{gromov3rmks} and average the function $b(t, v)$ along geodesics. 
We include a detailed proof for completeness. 

\begin{lem}\label{alinj}
Let 
\[ a_l (t, v) = \frac{1}{l} \int_t^{t+l} b(s, v) \, ds. \] 
There is a large enough $l$ 
so that $t \mapsto a_l(t,v)$ is injective for all $v$. 
\end{lem}

\begin{proof}
The fundamental theorem of calculus gives 
\begin{equation}\label{al}
\frac{d}{dt}  a_l(t, v) = \frac{b(t+l,v) - b(t,v)}{l}.
\end{equation}
We claim there is a large enough $l$ so that this quantity is always positive. To this end, suppose $b(t+l,v) - b(t,v) = 0$. 
This means ${\mathcal F}_0( \phi^t v)$ and ${\mathcal F}_0( \phi^{t + l}v)$ are in the same fiber of the normal projection onto the geodesic $\overline f(v)$.
Since $s \mapsto \tilde f(\phi^s v)$ is a quasi-geodesic, by Lemma \ref{Morselemma}, there is a constant $R = R(A, B, \lambda)$, so that all points on $\tilde f(\phi^s v)$ are of distance at most $R$ from the geodesic $\psi^t {\mathcal F}_0(v)$.
Thus two points on the same fiber of the normal projection are at most distance $2R$ apart, which gives
\[ A^{-1} l - B \leq d(f( \phi^t v), f( \phi^{t + l}v)) \leq 2R. \]
Taking $l > A(2R+B)$ guarantees $\frac{d}{ds} a_l(s, v)$ is never 0, and hence $a_l(s, v)$ is injective.
\end{proof}
\begin{prop}
For each $v \in T^1 M$, let 
\[ {\mathcal F}_l(v) = \psi^{a_l(0,v)} {\mathcal F}_0(v) \]
for $a_l$ as in Lemma \ref{alinj}.
Then ${\mathcal F}_l$ is an orbit equivalence of geodesic flows.
\end{prop}

\begin{proof}
Since ${\mathcal F}_l$ sends geodesics to geodesics, there exists a cocycle $k_l(t,v)$ so that 
${\mathcal F}_l (v) = \psi^{k_l(t,v)} {\mathcal F}_l(v)$. We need to check $t \mapsto k_l(t,v)$ is injective.
Note that
\begin{align*}
a_l(0, \phi^tv) &= \frac{1}{l} \int_0^l b(s, \phi^t v) \, ds \\
&= \frac{1}{l} \int_0^l b(s+t, v) - b(t,v) \, ds \\
&= a_l(t,v) - b(t,v).
\end{align*}
This means 
\begin{align*}
{\mathcal F}_l(\phi^t v) &= \psi^{a_l(0, \phi^t v)} {\mathcal F}_0( \phi^t v) 
= \psi^{a_l(0, \phi^t v)+ b(t,v)} {\mathcal F}_0(v) 
= \psi^{a_l(t,v)} {\mathcal F}_0(v).
\end{align*}
Therefore, 
${\mathcal F}_l(\phi^t v) = \psi^{k_l(t,v)} {\mathcal F}_l(v) = \psi^{a_l(t,v)} {\mathcal F}_0(v)$, and hence
\begin{equation}\label{kderiv}
\frac{d}{dt} |_{t=0} k_l(t,v) = \frac{d}{dt} |_{t=0} a_l(t,v) = \frac{b(l,v)}{l}.
\end{equation}
The proof of Lemma \ref{alinj} shows the above quantity is positive. So ${\mathcal F}_l$ is injective along geodesics, as desired. 
\end{proof}

We now proceed to find a H{\"o}lder estimate for $\mathcal{F}$. Most of the work is finding estimates for the map $\mathcal{F}_0$ from (\ref{F0}) (Proposition \ref{F0holder}).

\begin{lem}\label{bbd}
Let $b(t,v)$ as in (\ref{F0}). Let $A, B$ as in Lemma \ref{lem:QI}. 
Then $b(t,v)$ satisfies 
\[A^{-1} t - B' \leq b(t,v) \leq At.\] for all $t$,
where $B'$ is a constant depending only on $\lambda, A, B$. 
\end{lem}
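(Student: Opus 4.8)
The plan is to unwind the definition of the cocycle $b(t,v)$ into a statement about the rate at which the orthogonal projection of the quasigeodesic $\tilde f\circ\eta$ sweeps along $\zeta$, and then read off the two inequalities from two standard facts: nearest-point projection onto a geodesic in a Hadamard manifold is $1$-Lipschitz, and a quasigeodesic fellow-travels its limiting geodesic (Lemma \ref{Morselemma}). Fix $v\in T^1\tilde M$, let $\eta$ be the unit-speed geodesic of $\tilde M$ with $\eta'(0)=v$ (so $\phi^t v=\eta'(t)$ and $\eta(0)$ is the footpoint of $v$), and let $\zeta=\overline f(\eta)$, parametrized by arclength with orientation matching that of $\eta$ under $\overline f$. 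Since $\tilde N$ is a Hadamard manifold, $\zeta$ is globally minimizing and the nearest-point projection $P_\zeta$ is well defined and $1$-Lipschitz; writing $P_\zeta(\tilde f(\eta(t)))=\zeta(\rho(t))$ for a continuous function $\rho\colon\R\to\R$, the definition of $\mathcal F_0$ and of $b$ in (\ref{F0}) unwinds to $b(t,v)=\rho(t)-\rho(0)$. In particular $b(0,v)=0$ and $|b(t,v)|=d\bigl(P_\zeta\tilde f(\eta(0)),\,P_\zeta\tilde f(\eta(t))\bigr)$, since $\zeta$ is a unit-speed minimizing geodesic.

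Take $t\ge 0$. The upper bound is then immediate: chaining the $1$-Lipschitz bound for $P_\zeta$, the Lipschitz estimate $d(\tilde f(x),\tilde f(y))\le A\,d(x,y)$ from (\ref{pseudoisom}), and $d(\eta(0),\eta(t))=t$ gives $|b(t,v)|\le At$, hence $b(t,v)\le At$. For the lower bound, observe that $s\mapsto\tilde f(\eta(s))$ is an $(A,B)$-quasigeodesic in $\tilde N$ by (\ref{pseudoisom}), so Lemma \ref{Morselemma} supplies a constant $R=R(A,B,\lambda)$ with $d\bigl(\tilde f(\eta(s)),P_\zeta\tilde f(\eta(s))\bigr)\le R$ for all $s$. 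The triangle inequality together with the coarse-expanding half $d(\tilde f(x),\tilde f(y))\ge A^{-1}d(x,y)-B$ of (\ref{pseudoisom}) then gives
\[ |b(t,v)|\;\ge\; d\bigl(\tilde f(\eta(0)),\tilde f(\eta(t))\bigr)-2R\;\ge\;A^{-1}t-B-2R. \]
It remains to pass from $|b(t,v)|$ to $b(t,v)$. Beyond the threshold $T:=A(B+2R)$ the right-hand side above is strictly positive, so $t\mapsto b(t,v)$ does not vanish on $(T,\infty)$; being continuous, equal to $0$ at $t=0$, and tending to $+\infty$ as $t\to+\infty$ (because $\tilde f(\eta(t))\to\overline f(\eta(+\infty))=\zeta(+\infty)$ while $\zeta(\rho(t))$ stays within $R$ of it, forcing $\rho(t)\to+\infty$), it is positive there, so $b(t,v)=|b(t,v)|\ge A^{-1}t-B-2R$ on that ray; while for $0\le t\le T$ one has $b(t,v)\ge-|b(t,v)|\ge-At\ge-AT$. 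Taking $B'$ large enough in terms of $A$, $B$, $R$ (for instance $B'=(A^2+1)(B+2R)$) absorbs both regimes and yields $A^{-1}t-B'\le b(t,v)\le At$, with $B'$ depending only on $\lambda$, $A$, $B$.

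The only genuinely delicate input is the uniformity of the fellow-travelling constant $R$: a priori it depends on the geometry of $(N,g_0)$, but Lemma \ref{Morselemma} controls it purely in terms of the quasi-isometry constants $A,B$ and the upper curvature bound $-\lambda^2$, and Proposition \ref{fpseudoisom} forces $A,B$ to depend only on $n,\Gamma,\lambda,\Lambda,i_M,i_N$; this propagates to $B'$. The remaining ingredients --- that nearest-point projection in a Hadamard manifold is $1$-Lipschitz onto a geodesic, that $\tilde f\circ\eta$ is a genuine $(A,B)$-quasigeodesic with limiting geodesic $\zeta$ (so that Lemma \ref{Morselemma} and the boundary convergence apply), and the elementary sign bookkeeping near $t=0$ --- are routine, so the uniformity of $R$ is the main obstacle.
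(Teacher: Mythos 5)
Your proof is correct and follows essentially the same route as the paper's: the upper bound comes from the $1$-Lipschitz nearest-point projection together with the Lipschitz half of (\ref{pseudoisom}), and the lower bound from the fellow-travelling constant $R$ of Lemma \ref{Morselemma} together with the coarse lower half of (\ref{pseudoisom}); the extra bookkeeping you do to pass from $|b(t,v)|$ to $b(t,v)$ is a detail the paper elides by treating $b(t,v)$ directly as a distance between projections. Note that what you establish is $A^{-1}t - B' \leq b(t,v) \leq At$, which is the form actually invoked later (e.g.\ in the proof of Proposition \ref{suholderest}); the lower bound $At - B'$ in the lemma's statement appears to be a typo, so you have proved the intended statement.
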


\begin{proof}
Recall $b(t, v) = d(P_{\eta} \tilde f(p), P_{\eta} \tilde f(q))$, which is bounded above by $d(\tilde f(p), \tilde f(q))$ because orthogonal projection is a contraction in negative curvature.
This quantity is in turn bounded above by $At$, using the Lipschitz bound for $f$ in Lemma \ref{lem:QI}. 

Next, let $R$ be the constant in Lemma \ref{Morselemma}. Then $d(\tilde f(p), P_{\eta}(\tilde f(p)) \leq R$, which implies $b(t,v) \geq d(\tilde f(p), \tilde f(q)) - 2R$. The desired estimate then follows from the lower bound for $d(\tilde f(p), \tilde f(q))$ in Lemma \ref{lem:QI}.
\end{proof}

\begin{lem}\label{proj}
There is small enough $\delta_0 = \delta_0(\Lambda)$ so that for any $\delta \leq \delta_0$ the following holds. 
Fix $v \in T^1 \tilde M$ and let $x \in \tilde M$ be a point such that the orthogonal projection $P_v(x)$ of $x$ onto the bi-infinite geodesic determined by $v$ is the footpoint of $v$. 
Let $w \in W^{su}(v)$ and suppose further that $d_{su}(v, w) < \delta$. 
Then there is a constant $C$, depending only on $\Lambda$ and $d(x, p)$, 
so that $d(P_v(x), P_w(x)) < C \delta$. 
\end{lem}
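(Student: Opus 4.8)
The plan is to reduce the statement to a one–variable convexity problem for a distance function. Since $w\in W^{su}(v)$, the bi-infinite geodesics $\gamma_v,\gamma_w\subset\tilde M$ determined by $v$ and $w$ are negatively asymptotic to a common point $\eta\in\partial\tilde M$; let $B=B_\eta$ be the associated Busemann function, normalized so that $B$ vanishes at the footpoints $p$ of $v$ and $q$ of $w$. These footpoints lie on a common unstable horosphere at intrinsic distance $d_{su}(v,w)<\delta$, so in particular $d(p,q)<\delta$. Parametrize $\gamma_w$ as a unit–speed gradient line of $B$, so that $\gamma_w(0)=q$ and $B(\gamma_w(s))=s$, and write $P_w(x)=\gamma_w(t_0)$. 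Then $d(P_v(x),P_w(x))=d(p,\gamma_w(t_0))\le d(p,q)+|t_0|<\delta+|t_0|$, so it suffices to show $|t_0|\le C\delta$.

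Set $h(s):=d(x,\gamma_w(s))$. As the restriction of a distance function on the Hadamard manifold $\tilde M$ to a geodesic, $h$ is convex; it attains its minimum at $s=t_0$ with $h'(t_0)=0$; and the Hessian comparison theorem (using $\sec\le-\lambda^2$) gives $h''(s)\ge\lambda\coth(\lambda h(s))\bigl(1-h'(s)^2\bigr)$. If $D:=d(x,p)\le 2\delta$ the claim is immediate, since then $d(x,\gamma_w)\le d(x,q)\le 3\delta$ and $|t_0|\le d(q,P_w(x))\le d(q,x)+d(x,P_w(x))\le 5\delta$; so assume $D>2\delta$. For $s$ between $0$ and $t_0$ we have $h(s)\le h(0)=d(x,q)\le D+\delta$, hence $\coth(\lambda h(s))\ge\coth(\lambda(D+\delta))$ there. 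The hypothesis $P_v(x)=p$ says exactly that $[x,p]$ meets $\gamma_v$ orthogonally, i.e. $\langle v,\overrightarrow{px}\rangle_p=0$, where $\overrightarrow{px}$ denotes the unit vector at $p$ pointing to $x$. Writing $h'(0)=-\langle w,\overrightarrow{qx}\rangle_q$ and transporting back to $T_p\tilde M$ along $[p,q]$ via parallel transport $P_{pq}$, Cauchy--Schwarz gives
\[ |h'(0)|\ \le\ \|w-P_{pq}v\|+\|\overrightarrow{qx}-P_{pq}\overrightarrow{px}\|. \]
For the first term, $v$ and $w$ lie on a common strong unstable manifold with footpoints at distance $<\delta$, so the analogue of Lemma~\ref{stableht} for $W^{su}$ (obtained by reversing the flow), together with (\ref{sasaki}) and the bound $\|u_1-u_2\|\le d_{T^1}(u_1,u_2)$ on unit spheres, gives $\|w-P_{pq}v\|\le(1+\Lambda)\delta$. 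For the second term one sets up the geodesic variation with endpoint $x$ held fixed and basepoint running along $[p,q]$; the variation field is a Jacobi field vanishing at $x$, and the Rauch estimate (using $\sec\le-\lambda^2$ and $d(\cdot,x)\ge D-\delta>0$) bounds the change of its initial direction, giving $\|\overrightarrow{qx}-P_{pq}\overrightarrow{px}\|\le\delta\,\Lambda\coth(\Lambda(D-\delta))$.

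Finally, integrating $h'$ from $0$ to $t_0$ and using the strict convexity of $h$ — whose second derivative near the minimum is $\gtrsim\lambda\coth(\lambda D)$, reflecting that the minimum of $h$ is \emph{sharper} when $D$ is smaller — one arrives, after combining the two displayed bounds and \emph{keeping the $\coth$ and $\tanh$ factors paired}, at a bound of the form
\[ |t_0|\ \le\ \frac{2\delta}{\lambda}\,\tanh\!\bigl(\lambda(D+\delta)\bigr)\Bigl[(1+\Lambda)+\Lambda\coth\!\bigl(\Lambda(D-\delta)\bigr)\Bigr]\ \le\ C(\lambda,\Lambda)\,\delta, \]
the last step because, for $D>2\delta$, the product $\tanh(\lambda(D+\delta))\coth(\Lambda(D-\delta))$ is bounded in terms of $\lambda,\Lambda$ only (split according to whether $\Lambda(D-\delta)\le1$, using $\tanh u\le u$, $\coth u\le u^{-1}+1$ and $\tfrac{D+\delta}{D-\delta}\le3$). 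Therefore $d(P_v(x),P_w(x))<(1+C)\delta$. (The parameters $n,\Gamma,i_M$ permitted in the statement are harmless over-generality; $i_M$ would enter only through a diameter bound on $d(x,p)$, which is not needed here since the argument is purely local.)

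The step requiring the most care is exactly this final balancing. Soft arguments — convexity of $h$, or comparing $h$ with $s\mapsto d(x,\gamma_v(s))$ and using that $\gamma_v,\gamma_w$ fellow-travel — only give the weaker bound $|t_0|=O(\sqrt{\delta})$, because the Jacobi estimate $\|\overrightarrow{qx}-P_{pq}\overrightarrow{px}\|\le\delta\,\Lambda\coth(\Lambda(D-\delta))$ (and hence $|h'(0)|$) degenerates to an $O(1)$ quantity precisely when $D\approx\delta$. What rescues the linear-in-$\delta$ estimate is that the sharpness of the minimum of $h$ grows at exactly the compensating rate $\coth(\lambda D)\sim\tanh(\lambda(D+\delta))^{-1}$; tracking this compensation — and, throughout, making sure every constant depends only on $\lambda$ and $\Lambda$ — is where the real work lies, the convexity and Rauch inputs themselves being standard.
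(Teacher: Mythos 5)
Your approach is genuinely different from the paper's and, if carried through correctly, is stronger: you reduce everything to convexity of $h(s)=d(x,\gamma_w(s))$ together with Hessian comparison along $\gamma_w$ and a derivative estimate for $\nabla\rho$ along $[p,q]$, and you correctly observe that this makes the constant depend only on $\lambda,\Lambda$. By contrast the paper picks out an auxiliary point $\gamma(s_0)$ where the geodesic of $w$ crosses the hypersurface $\exp_p(\operatorname{grad}B(p)^\perp)$, estimates the angle $\theta$ there via a Jacobi field along the ray from $x$, and in doing so invokes a uniform bound $d(x,p)\le R$ coming from the Morse lemma (so its constant inherits the pseudo-isometry constants, hence $n,\Gamma,i_M,i_N$). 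Your observation that the diameter/Morse bound on $d(x,p)$ is unnecessary is correct and is an actual sharpening of the lemma.

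There is, however, a real gap in your final balancing step. From $h''\ge\lambda\coth(\lambda h)(1-(h')^2)$ you can only extract $|t_0|\le\tfrac{1}{\lambda}\tanh(\lambda(D+\delta))\,\operatorname{arctanh}|h'(0)|$ (or, equivalently, you divide by $1-h'(0)^2$), and passing from $\operatorname{arctanh}|h'(0)|$ to $2|h'(0)|$ — which is what your displayed estimate asserts — requires $|h'(0)|$ bounded away from $1$. Your upper bound $|h'(0)|\le\Lambda\delta\bigl(1+\coth(\Lambda(D-\delta))\bigr)$ does \emph{not} guarantee this when $D$ is just above $2\delta$: there $\delta\,\Lambda\coth(\Lambda(D-\delta))\approx\delta/(D-\delta)$ can be arbitrarily close to $1$, so the factor $1-(h')^2$ (equivalently the $\operatorname{arctanh}$) is uncontrolled, and the claimed inequality does not follow. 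The fix is easy and keeps the spirit of your argument: split instead at $D\le K\delta$ versus $D>K\delta$ for a constant $K=K(\Lambda)$ large enough that for $\delta\le\delta_0(\Lambda)$ one has $\Lambda\delta(2+1/(\Lambda(D-\delta)))\le\tfrac12$ in the second regime; in the first regime the purely triangle-inequality bound you already wrote gives $|t_0|\le 2d(x,q)\le 2(K+1)\delta$, and in the second the $\tanh$--$\coth$ pairing goes through exactly as you describe. With that repair the proof is correct and, as noted, slightly stronger than the paper's.

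Two smaller remarks: the bound $\|w-P_{pq}v\|\le(1+\Lambda)\delta$ is safe but loose — Lemma~\ref{stableht} actually controls the \emph{angular} part by $\Lambda\delta$, and the chord-versus-arc inequality then gives $\|w-P_{pq}v\|\le\Lambda\delta$; and the ``Rauch estimate'' for $\|\overrightarrow{qx}-P_{pq}\overrightarrow{px}\|$ is more transparently stated as $\|\nabla_{[p,q]'}\nabla\rho\|\le\Lambda\coth(\Lambda\rho)$ (Hessian comparison using $\sec\ge-\Lambda^2$), integrated over $[p,q]$, which gives your inequality directly without setting up the variation.
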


\begin{proof}
Let $p$ and $q$ denote the footpoints of $v$ and $w$, respectively. 
For $u$ a vector normal to $v$, write $\gamma(s) = \exp_p(su)$. Now, given $w$, choose the unique such $u$ so that the image of $\gamma(s)$ intersects the geodesic generated by $w$.
Let $s_0$ such that $\gamma(s_0)$ intersects the geodesic determined by $w$.
%
We claim there are positive constants $\delta_0 = \delta_0(\Lambda)$ and $C = C(\Lambda)$ so that if $d_{su}(v,w) \leq \delta \leq \delta_0$ then $s_0 \leq C \delta$. 
To see this, first note that there exists $S$, depending only on $\Lambda$, such that for any $s_0 \leq S$ we have $\frac{s_0}{2} \leq \tanh(\Lambda s_0)$. 
By \cite[Proposition 4.7]{heintzehof}, we also have $\tanh(\Lambda s_0) \leq \Lambda d_{su}(v,w)$. 
Hence we can choose $\delta_0$ small enough such that $d_{su}(v,w) \leq \delta_0$ gives $s_0 \leq S$.
Hence, if $d_{su}(v,w) \leq \delta \leq \delta_0$, we get, using \cite[Proposition 4.7]{heintzehof} again, 
\[ \frac{s_0}{2} \leq  \tanh(\Lambda s_0) \leq \Lambda \delta,\]
which proves the claim.

Now let $\theta$ denote the angle between the geodesic segment $[x, \gamma(s_0)]$ and the geodesic determined by $w$. We start by showing $\theta$ is close to $\pi/2$.
In the case where $x$ and $p$ coincide, the above angle $\theta$ is the same as the angle $\theta$ in Lemma \ref{Bupper}. Thus, $\cos \theta \leq \Lambda s_0$. 

Otherwise, let $t_0 = d(x, p) \neq 0$. 
We consider two further cases: 
$d(x, \gamma(s_0)) \leq \delta$ and $d(x, \gamma(s_0)) \geq \delta$.
%
For the proof in the first case, we start by noting that
\begin{align*}
d(p, P_w(x)) &\leq d(p, \gamma(s_0)) + d(P_w(x), \gamma(s_0)) \\
&\leq d(p,q) + d(q, \gamma(s_0)) + d(P_w(x), x) + d(x, \gamma(s_0)). 
\end{align*}
Since $d(v,w) < \delta$ (by assumption), so is $d(p, q)$. 
By Lemma \ref{Bupper}, 
 $d(q, \gamma(s_0)) \leq \Lambda s_0^2 \leq C \delta^2$.
Finally, note $d(x, P_w(x)) \leq d(x, \gamma(s_0))$ by definition of $P_w(x)$. 
So applying the hypothesis $d(x, \gamma(s_0)) \leq \delta$ completes the proof in this case. 

Now we consider the case $d(x, \gamma(s_0)) \geq \delta$.
Let $v_0 \in T_x^1 M$ such that $\exp_x(t_0 v_0) = p$. 
For $0 < s \leq s_0$, let  $v(s) \in T_{x_0} M$ such that $\exp_x(t_0 v(s)) = \gamma(s)$. 
Then $X(s):= \frac{d}{dt}|_{t = t_0} \exp_x(t v(s))$ is a vector field along $\gamma(s)$. 
The hypothesis $d(x, \gamma(s_0)) \geq \delta$ allows us to bound
\begin{equation}\label{XsXs0}
\frac{\Vert X(s) \Vert}{\Vert X(s_0) \Vert} = \frac{d(x, \gamma(s))}{d(x, \gamma(s_0))} \leq 1 + \frac{s_0}{d(x, \gamma(s_0))} \leq 1 + \frac{s_0}{\delta} \leq 1 + C,
\end{equation}
where $C$ is a constant depending only on $\Lambda$. 

Let $\xi \in \partial \tilde M$ denote the common backward endpoint of $v$ and $w$.
We now claim there is a constant $C = C(t_0, \Lambda)$ 
so that 
\begin{equation}\label{costheta}
\cos \theta = \frac{\langle {\rm grad}B_{\xi}(\gamma(s_0)), X(s_0) \rangle}{\Vert X(s_0) \Vert} \leq C s_0.
\end{equation}
Since $\langle {\rm grad}B_{\xi}(\gamma(0)), X(0) \rangle = 0$, the fundamental theorem of calculus gives 
\[ \langle {\rm grad}B_{\xi}(\gamma(s_0)), X(s_0) \rangle = \int_0^{s_0} \frac{d}{ds} \langle {\rm grad}B_{\xi}(\gamma(s)), X(s) \rangle \, ds. \]
So the desired bound for $\cos(\theta)$ follows from bounding the integrand from above by $C \Vert X(s_0) \Vert$ for all $s \in [0, s_0]$. In light of (\ref{XsXs0}), it suffices to find an upper bound of the form $C \Vert X(s) \Vert$. 
To this end, we rewrite integrand using the product rule:
\begin{equation}\label{prodrule}
\frac{d}{ds} \langle {\rm grad}B(\gamma(s)), X(s) \rangle  = \langle \nabla_{\gamma'} {\rm grad}B(\gamma(s)), X(s) \rangle + \langle {\rm grad}B(\gamma(s)), \nabla_{\gamma'} X(s) \rangle.
\end{equation}
The first term on the righthand side is bounded above by
\[ \Vert X(s) \Vert  |\langle \nabla {\rm grad} B_{\gamma'}(\gamma(s)), \overline u \rangle| =  \Vert X(s) \Vert {\rm Hess}B_{\xi}(\gamma'(s), \overline u)\]
for some unit vector $\overline u$. 
Next, using that the Hessian is symmetric positive-definite bilinear form, together with Remark \ref{BupperREM}, we have 
\begin{align*}
{\rm Hess}B_{\xi}(\gamma'(s), \overline u) &\leq \frac{1}{4} {\rm Hess}B_{\xi}(\gamma'(s) + \overline u, \gamma'(s) + \overline u)
\leq \frac{\Lambda}{4} \Vert \gamma'(s) + \overline u \Vert \leq \frac{\Lambda}{2}.
\end{align*}

Now we consider the second term in (\ref{prodrule}).
First note that $\nabla_{\gamma'} X(s) = J_s'(t_0)$, where $J_s(t)$ is the Jacobi field along the geodesic $\eta_s(t) = \exp_x(t v(s))$
with initial conditions $J_s(0) = 0$ and $J_s'(0) = v(s)$. 
In order to bound $\Vert J_s'(t_0) \Vert$, we let $e_1(t) = \eta'(t), e_2(t), \dots, e_n(t)$ be a parallel orthonormal frame along $\eta(t)$. 
Let $f_1(t), \dots, f_n(t)$ such that $J_s(t) = \sum_{i=1}^n f_i(t) e_i(t)$.
The fact that $J_s$ satisfies the Jacobi equation means $f_1''(t) = 0$, so
 $f_1(t) = \langle v(s), \eta'(0) \rangle t$ and $| f_1'(t) | \leq \Vert v(s) \Vert = \Vert X(s) \Vert$. 
Now let $J_s^{\perp}$ denote the component of $J_s$ which is perpendicular to $\eta_s$. By \cite[Proposition IV.2.5]{ballmann}, we have
\[ \Vert (J_s^{\perp})' (t_0) \Vert \leq \cosh(\Lambda t_0) \Vert J_s^{\perp} (0) \Vert \leq \cosh(\Lambda t_0){\Vert X(s) \Vert}, \]
This completes the verification of (\ref{costheta}).

Now let $q'$ be the orthogonal projection of $x$ onto the geodesic determined by $w$. 
We use our bound for $\cos \theta$ to show $d(p, q')$ is small.
Consider the geodesic triangle with vertices $x$, $q'$ and $\gamma(s_0)$. 
The angle at $q'$ is $\pi/2$ by definition of orthogonal projection, and we have just shown the angle $\theta$ at $\gamma(s_0)$ satisfies $\cos \theta \leq Cs_0$, where $s_0 < C \delta$. 
Then by \cite[Theorem 7.11.2 iii)]{beardon}
%
$\tanh(d(q', \gamma(s_0)) \leq C \delta \tanh(d(x, q')) \leq C \delta$, where $C$ is the constant in (\ref{costheta}). 
Thus, for $\delta_0$ sufficiently small in terms of $C$, we see that $d(q', \gamma(s_0)) \leq 2 C \delta$ whenever $\delta < \delta_0$. 
Now recall from the first paragraph that $d(p, \gamma(s_0)) = s_0 \leq C \delta$. Noting that $d(p, q') \leq d(p, \gamma(s_0)) + d(q', \gamma(s_0))$ completes the proof. 
\end{proof}

 \begin{prop}\label{suholderest}
 Let $\delta_0 = \delta_0(\Lambda)$ be as small as in the previous lemma. 
Suppose $w \in W^{su}(v)$ and $d_{su}(v,w) < \delta_0$.
Then there is a constant $C = C(\lambda, \Lambda, A, B)$
so that
$d(\mathcal{F}_0(v), \mathcal{F}_0(w)) \leq C d_{su}(v,w)^{A^{-1} \lambda/\Lambda}$, where $A$ and $B$ are as in Lemma \ref{lem:QI}. 
The analogous statement holds if $w \in W^{ss}(v)$ instead.
 \end{prop}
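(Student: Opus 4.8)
\textbf{Proof proposal for Proposition \ref{suholderest}.}

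The plan is to reduce the H\"older estimate for $\mathcal{F}_0$ to a statement about how far apart two nearby orthogonal projections can land on their respective image geodesics, and then convert a distance bound in $\tilde N$ into a H\"older exponent via the exponential divergence of geodesics in negative curvature. Concretely, let $p, q$ be the footpoints of $v, w \in W^{su}(v)$ with $d_{su}(v,w) = \delta < \delta_0$, and write $\eta$ (resp.\ $\eta'$) for the bi-infinite geodesic in $\tilde M$ determined by $v$ (resp.\ $w$). By definition, $\mathcal{F}_0(v)$ is the tangent vector to $\zeta = \overline f(\eta)$ at $P_\zeta(f(p))$, and similarly for $\mathcal{F}_0(w)$ with $\zeta' = \overline f(\eta')$. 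First I would use the fact that $v$ and $w$ lie on the same strong unstable manifold: they share a backward endpoint $\eta_-(\infty) \in \partial\tilde M$, so $\eta$ and $\eta'$ are \emph{asymptotic in backward time}. Passing through the boundary map $\overline f$, the images $\zeta$ and $\zeta'$ are then forward-asymptotic or backward-asymptotic geodesics in $\tilde N$ sharing a common endpoint $\overline f(\eta_-(\infty))$; in particular they lie on a common strong (un)stable leaf in $T^1\tilde N$ too.

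Next I would control the \emph{basepoint} discrepancy. By Lemma \ref{Morselemma}, $f(p)$ lies within distance $R = R(A, B, \lambda)$ of both $\zeta$ and a uniformly close point of $\zeta'$ (after adjusting by Lemma \ref{proj}, which is exactly the statement that $d(P_v(x), P_w(x)) < C\delta$ for $x$ projecting to the footpoint of $v$ — applied here with the roles transported through $f$). This should give that the two projection points $P_\zeta(f(p))$ and $P_{\zeta'}(f(p))$ are within a uniformly bounded distance of each other along geodesics that share an endpoint. Because $\zeta, \zeta'$ are asymptotic geodesics in a space of curvature $\leq -\lambda^2$, two points on them at bounded ``transverse'' separation but with the geodesics converging exponentially: here the key input is that $f$ is a pseudo-isometry with constants $A, B$ depending only on $n, \Gamma, \lambda, \Lambda, i_M$ (Proposition \ref{fpseudoisom}), together with the convexity of the distance function to a geodesic in nonpositive curvature. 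The upshot I want is an estimate of the form $d(\mathcal{F}_0(v), \mathcal{F}_0(w)) \leq C\, e^{-(\lambda/A)\,T}$ where $T$ is (comparable to) $-\tfrac{1}{\Lambda}\log d_{su}(v,w)$, i.e.\ the ``time depth'' up to which $v$ and $w$ remain $O(1)$-close in $T^1\tilde M$ along the backward flow before the strong unstable holonomy has spread them apart; the factor $1/\Lambda$ in the rate and the factor $A$ loss from the pseudo-isometry together produce the exponent $A\lambda/\Lambda$ (or its reciprocal, depending on sign conventions — I would track this carefully so it matches the $A^{-1}\lambda/\Lambda$ appearing in Proposition \ref{holder}).

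The main obstacle I anticipate is making the last step quantitative with the \emph{correct} constants: one has to show that flowing $v$ and $w$ backward for time $\sim \tfrac{1}{\Lambda}\log(1/\delta)$ brings their footpoints within $O(1)$ distance (this uses the \emph{lower} curvature bound $-\Lambda^2$ to bound how slowly unstable leaves can contract backward, cf.\ Lemma \ref{Blower} and the horospherical geometry of Section \ref{localprodsection}), then transport this $O(1)$-closeness through $f$ (losing the factor $A$ and an additive $B$, absorbed into the constant $C$), then flow \emph{forward} in $\tilde N$ and use the \emph{upper} curvature bound $-\lambda^2$ to contract the resulting $O(1)$ gap exponentially at rate $\lambda$ over the corresponding time in $\tilde N$, which by Lemma \ref{bbd} is at least $A^{-1}$ times the time in $\tilde M$ minus $B'$. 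Composing these three estimates and exponentiating is where all the constants must be shepherded so that each depends only on $n, \Gamma, \lambda, \Lambda, i_M$; the $d(p,p_1) \le \mathrm{diam}(M)$-type reductions and the replacement of $\mathrm{diam}(M)$ by $i_M$ (via Gromov's volume bound) let me keep everything uniform. Finally, the $W^{ss}$ case follows by running the same argument with the time direction reversed, or simply by applying the $W^{su}$ case to the flip map $v \mapsto -v$, which conjugates $\phi^t$ to $\phi^{-t}$ and interchanges the strong stable and strong unstable foliations while preserving all the metric quantities involved.
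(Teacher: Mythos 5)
Your high-level picture --- compare time scales in $\tilde M$ and $\tilde N$, use the pseudo-isometry to lose a factor of $A$, and extract the exponent $A^{-1}\lambda/\Lambda$ from the two curvature rates --- is the right one, and you correctly anticipate the exponent that appears in Proposition~\ref{holder} (including the $A$ vs.\ $A^{-1}$ discrepancy in the present statement, which is evidently a typo). You also correctly locate where Lemma~\ref{proj} and Lemma~\ref{Morselemma} enter. But there is a genuine error in the flow directions, and as written your outline would not produce a decaying estimate.

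Concretely: for $w \in W^{su}(v)$ at horospherical distance $\delta$, \emph{forward} flow expands the unstable separation and \emph{backward} flow contracts it. You propose to flow backward in $\tilde M$ for time $\sim\Lambda^{-1}\log(1/\delta)$ to ``bring the footpoints within $O(1)$ distance,'' but they are already within distance $\delta \ll 1$, so this step does nothing; worse, the Morse/projection error in transporting through $f$ is itself $O(1)$, so the $\tilde N$-side gap at that time is still $O(1)$, and your final step --- flowing \emph{forward} in $\tilde N$ to return to time $0$ --- \emph{expands} that $O(1)$ gap by up to $e^{\Lambda\,|b|}$, which blows up rather than decays. The paper's argument runs the other way: flow \emph{forward} in $\tilde M$ by a time $t$ chosen so that $d_{su}(\phi^t v, \phi^t w)=1$ (whence $t \ge \Lambda^{-1}\log(1/\delta)$, the lower curvature bound giving the \emph{lower} bound on the required time); use the Lipschitz bound on $f$ and Lemma~\ref{Morselemma} to get an $O(1)$ bound on $d_{su}$ of the images at time $b(t,v)$; then flow \emph{backward} in $\tilde N$ by $b(t,v) \ge A^{-1}t - B'$, contracting at rate at least $\lambda$. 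The composition gives $\le C\,e^{-\lambda A^{-1} t} \le C\,\delta^{A^{-1}\lambda/\Lambda}$.

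One further structural point you pass over: the quantity to be estimated is the Sasaki distance $d(P_{\eta_1}(f(p)), P_{\eta_2}(f(q)))$, which the paper splits by the triangle inequality into $d(P_{\eta_1}(f(p)), P_{\eta_1}(f(q)))$ (bounded by $A\,d(p,q)$ since projection is a contraction and $f$ is $A$-Lipschitz) and $d(P_{\eta_1}(f(q)), P_{\eta_2}(f(q)))$. Only the second term needs the dynamical argument, and it is the second term --- not the map $\mathcal{F}_0$ directly --- to which Lemma~\ref{proj} reduces the problem, converting the question about nearby projections into a bound on $d_{su}(\mathcal{F}_0(v),\mathcal{F}_0(w'))$ for a reparametrized $w'$. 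Your outline treats the projection change and the footpoint change together, which obscures where the easy Lipschitz step ends and the flow argument begins.
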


\begin{proof}
Let $p$ and $q$ denote the footpoints of $v$ and $w$, respectively. 
Let $\eta_1$ be the bi-infinite geodesic in $\tilde N$ which corresponds, via the map $\overline{f}$, to the bi-infinite geodesic generated by $v$ in $\tilde M$. Then, 
by definition, $\mathcal{F}_0(v)$ is the vector tangent to $\eta_1$ based at $P_{\eta_1} (f(p))$. Similarly, $\mathcal{F}_0(w)$ is the vector tangent to $\eta_2$ based at $P_{\eta_2} (f(q))$ for the appropriate bi-infinite geodesics $\eta_2$ in $\tilde N$. In light of Lemma \ref{stableht}, it suffices to show the distance $d(P_{\eta_1}(f(p)), P_{\eta_2} (f(q))$ in $\tilde N$ is small.
By the triangle inequality,
\begin{equation}\label{projest}
d(P_{\eta_1}(f(p)), P_{\eta_2} (f(q)) \leq d(P_{\eta_1} ( f(p) ), P_{\eta_1} ( f(q) ) ) + d(P_{\eta_1} (f(q)), P_{\eta_2} f(q)).
\end{equation}

We start by estimating the first term.
Let $d_{su}(v,w) = \delta$. Then $d(p, q) < \delta$. 
By Lemma \ref{lem:QI}, we have $d(f(p), f(q)) < A \delta$. 
Since orthogonal projection is a contraction in negative curvature, the second term is bounded above by $d(f(p), f(q)) \leq A \, d(p, q)$.
 
Thus it remains to bound $d(P_{\eta_1} ( f(q) ), P_{\eta_2} ( f(q) ) )$, 
which we do by applying Lemma \ref{proj}.
Since $\mathcal{F}_0(v)$ and $\mathcal{F}_0(w)$ are on the same weak unstable leaf, there is $w'$ on the orbit of $w$ so that $\mathcal{F}_0(v)$ and $\mathcal{F}_0(w')$ are on the same strong unstable leaf. In light of Lemma \ref{proj}, it suffices to find a H{\"o}lder estimate for $d_{su}(\mathcal{F}_0(v), \mathcal{F}_0(w'))$.

Again, let $\delta = d_{su}(v,w)$ for simplicity. 
Since the unstable distance exponentially expands under the geodesic flow, there is some positive time $t$ so that $d_{su}(\phi^t v, \phi^t w) = 1$. More precisely, \cite[Proposition 4.1]{heintzehof} implies $\Lambda t \geq \log(1/\delta)$.

Next, note that $d(\mathcal{F}_0(\phi^t v), \mathcal{F}_0(\phi^t w')) \leq d(\mathcal{F}_0(\phi^t v), \mathcal{F}_0(\phi^t w)) \leq 2R + A$, where $R$ is as in Lemma \ref{Morselemma} and $A$ is the Lipschitz constant for $f$. 
Indeed, since $f$ is $A$-Lipschitz, we have $d( f(\phi^t v), f(\phi^t w)) \leq A$, and 
$d(f(\phi^t v), P_{\eta_1}f(\phi^t v)) \leq R$. 
By \cite[Theorem 4.6]{heintzehof}, we also have the bound $d_{su}(\mathcal{F}_0(\phi^t v), \mathcal{F}_0(\phi^t w')) \leq \frac{2}{\Lambda} \sinh(\Lambda(2R + A)/2)$.

By \cite[Proposition 4.1]{heintzehof}, we have the following estimate for how the unstable distance gets contracted under the geodesic flow:
\[ d_{su}(\mathcal{F}_0(v), \mathcal{F}_0(w')) \leq e^{-\lambda b(t,v)} d_{su}(\psi^{b(t,v)} \mathcal{F}_0(v), \psi^{b(t,v)} \mathcal{F}_0(w')). \]
Now recall $b(t,v) \geq A^{-1}t - B'$ from Lemma \ref{bbd}. 
This, together with the previous paragraph, gives
\[ d_{su}(\mathcal{F}_0(v), \mathcal{F}_0(w')) \leq e^{-\lambda (A^{-1}t - B')}  \frac{2}{\Lambda} \sinh(\Lambda(2R + A)/2) = C e^{- \lambda A^{-1} t}\] 
for some constant $C = C(\lambda, \Lambda, A, B)$.
Finally, we use $t \geq \frac{\log(1/\delta)}{\Lambda}$ to obtain $d_{su}(\mathcal{F}_0(v), \mathcal{F}_0(w')) \leq C \delta^{A^{-1} \lambda/\Lambda}$ for some other constant $C = C(\lambda, A, B)$.
By Lemma \ref{proj}, the second term in (\ref{projest}) is thus bounded above by $C \delta^{A^{-1} \lambda/\Lambda}$ for some other constant $C = C(\lambda, \Lambda, A, B)$, which completes the proof.
\end{proof}

 \begin{lem}\label{horcompsas}
There is small enough $\delta_0$, depending only on the curvature bounds $\lambda$ and $\Lambda$, so that if $w \in W^{ss}(v)$ and $d(v, w) < \delta_0$, then 
\[ c_1 d(v,w) \leq d_{ss}(v,w) \leq c_2 d(v,w), \]
where $c_1$ and $c_2$ are constants depending only on $\lambda$ and $\Lambda$. The analogous statement holds for $d_{su}$.  
\end{lem}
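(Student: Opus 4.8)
The plan is to compare the Sasaki distance $d(v,w)$ with the horospherical distance $d_{ss}(v,w) = h(p,q)$ (where $p,q$ are the footpoints of $v,w$) using the geometric description of $W^{ss}$ as the normal field along a horosphere $H_{\xi,p}$, together with the curvature comparison estimates already established in Lemmas \ref{Bupper} and \ref{Blower} (via \cite{heintzehof}). The upper bound $d(v,w)\le c_2\, d_{ss}(v,w)$ is the easy direction: a curve in $H_{\xi,p}$ from $p$ to $q$ of length close to $h(p,q)$ lifts to a curve in $W^{ss}(v)$ (the normal field along it), and by the second inequality in \eqref{sasaki}, $d(v,w)\le d_M(p,q)+d_{T^1_qM}(v',w)$. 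Here $d_M(p,q)\le h(p,q)$ automatically, and the angular term $d_{T^1_qM}(v',w)$ — the angle between the parallel transport of $v$ and $w$ — is controlled by integrating $\mathrm{Hess}\,B_\xi$ along the connecting horocycle exactly as in the proof of Lemma \ref{stableht}, giving a bound $\le \Lambda\, h(p,q)$. So $d(v,w)\le(1+\Lambda)\,d_{ss}(v,w)$, and one takes $c_2 = 1+\Lambda$.

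For the lower bound $c_1\, d(v,w)\le d_{ss}(v,w)$, the first inequality in \eqref{sasaki} already gives $d_M(p,q)\le d(v,w)$, so it would suffice to bound $d_{ss}(v,w)=h(p,q)$ above by a constant multiple of $d_M(p,q)$ (for $d(v,w)$ small). This is precisely a quantitative version of the statement that horospheres are uniformly Lipschitz-embedded submanifolds of $\tilde M$ on small scales: one uses the convexity/comparison estimates of \cite[Section 4]{heintzehof} — concretely, Lemma \ref{Bupper} controls how far $H_{\xi,p}$ pulls away from its tangent plane $\mathrm{grad}\,B_\xi(p)^\perp$ (quadratically: $f(s)\le \tfrac{\Lambda}{2}s^2$), while Lemma \ref{Blower} gives the matching lower bound on a compact scale $[0,S]$. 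Combining these on a small enough ball (so that $q$ lies within the region where the comparison applies) yields $h(p,q)\le c\, d_M(p,q)$ with $c$ depending only on $\lambda,\Lambda$, hence $c_1 = 1/c$. The choice of $\delta_0$ comes from requiring $d(v,w)$ small enough that the relevant horocyclic arc has length $\le S$ (which in turn can be taken to depend only on $\lambda,\Lambda$ since we only need a small absolute scale, not one tied to $\mathrm{diam}(M)$).

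The statement for $d_{su}$ is identical with $\xi$ replaced by the backward endpoint $\eta$ and $H_{\xi,p}$ by $H_{\eta,p}$; no new argument is needed. The main obstacle I anticipate is the lower bound direction: one must be careful that the comparison between $h(p,q)$ and $d_M(p,q)$ is genuinely two-sided and uses only $\lambda,\Lambda$ — the danger being that a priori $h(p,q)$ could be much larger than $d_M(p,q)$ if the horosphere were badly curved, but the uniform Hessian bounds $\lambda^2\le \mathrm{Hess}\,B_\xi\le \Lambda^2$ (equivalently $\lambda\le\mathrm{Hess}\,B\le\Lambda$ in the normalization used above, from \cite[Proposition IV.2.9]{ballmann}) prevent this on small scales. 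Making the scale $\delta_0$ and the constants depend only on $\lambda,\Lambda$ (and not on the metric or the diameter), by working entirely within a fixed small ball in the universal cover, is the point requiring the most care.
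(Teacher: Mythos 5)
Your handling of the two inequalities is swapped relative to the statement's labeling of $c_1,c_2$, but the first half of your argument is sound and matches the paper: from $d(v,w)\le d_M(p,q)+d_{T^1_qM}(v',w)$, Lemma~\ref{stableht} gives the angular term is at most $\Lambda\, d_M(p,q)$, and the trivial inequality $d_M(p,q)\le h(p,q)$ then yields $d(v,w)\le(1+\Lambda)\,h(p,q)=(1+\Lambda)\,d_{ss}(v,w)$, i.e.\ the left-hand bound with $c_1=(1+\Lambda)^{-1}$. (Your re-derivation of the angle estimate by integrating ${\rm Hess}\,B_\xi$ along the horocycle is unnecessary; Lemma~\ref{stableht} applied with $t=d_M(p,q)$ already suffices once you use $d_M\le h$.)

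There is a genuine gap in the other direction, $d_{ss}(v,w)\le c_2\,d(v,w)$, where you need $h(p,q)\le c\, d_M(p,q)$. Your plan to extract this from Lemmas~\ref{Bupper} and~\ref{Blower} does not work as written. Those lemmas control the Busemann function $B_\xi(\exp_p(sv))$ along a geodesic emanating orthogonally to ${\rm grad}\,B_\xi(p)$; this measures how the horosphere pulls away from $\exp_p({\rm grad}\,B_\xi(p)^\perp)$ but says nothing directly about the intrinsic horospherical distance $h$ versus the ambient distance $d_M$ between two points both lying on the horosphere. Converting the quadratic-deviation estimate into the $h$-versus-$d_M$ comparison would require an additional projection or Jacobi-field argument, and one must also be careful that $\exp_p({\rm grad}\,B_\xi(p)^\perp)$ is not totally geodesic in variable curvature, so ``graph over the tangent plane'' does not immediately give a distance comparison. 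The paper avoids all of this by citing \cite[Theorem~4.6]{heintzehof} directly: that theorem is exactly the two-sided, curvature-only comparison between $h(p,q)$ and $d_M(p,q)$, and both inequalities of the lemma then follow from that citation together with Lemma~\ref{stableht}. You should either cite Heintze--Im Hof's Theorem~4.6 as the paper does, or supply the missing Jacobi-field projection argument.
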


\begin{proof}
By \cite[Theorem 4.6]{heintzehof}, we have $d_{ss}(v, w) \leq \frac{\Lambda}{2} \sinh (2/ \Lambda \, d(p, q))$. 
Thus, if $d(p,q)$ is small enough (depending on $\Lambda$), we have $d_{ss}(v,w) \leq \frac{4}{\Lambda} d(p, q) \leq \frac{4}{\Lambda} d(v,w)$. 

By Lemma \ref{stableht}, $d(v,w) \leq (1 + \Lambda) d(p, q)$. By the other estimate in \cite[Theorem 4.6]{heintzehof}, there is a constant $C$, depending only on $\lambda$, so that  $d(p,q) \leq C d_{ss}(v,w)$ for all $p, q$ with $d(p, q)$ sufficiently small in terms of $\lambda$. 
\end{proof}

\begin{prop}\label{F0holder}
There exists small enough $\delta_0$, depending only on $\lambda$, $\Lambda$, 
so that for any $v, w \in T^1 \tilde M$ satisfying
$d(v,w) < \delta_0$ we have $d(\mathcal{F}_0(v), \mathcal{F}_0(w)) \leq C d(v,w)^{A^{-1} \lambda/\Lambda}$ for some constant $C = C(\lambda, \Lambda, A, B)$. 
\end{prop}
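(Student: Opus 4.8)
The plan is to reduce the general estimate to the strong-stable and strong-unstable cases already handled in Proposition \ref{suholderest}, using the local product structure of the geodesic flow together with the uniform constants established in Proposition \ref{localprodconst}. Given $v, w \in T^1 \tilde M$ with $d(v,w) < \delta_0$, I would first form the bracket point $z := [v,w] = W^{ss}(v) \cap W^{su}(\phi^{\sigma} w)$, where $\sigma = \sigma(v,w)$ satisfies $|\sigma| < d(v,w)$ by Lemma \ref{temporalbd}. Since $d(v, \phi^\sigma w) \le d(v,w) + |\sigma| \le 2 d(v,w)$, the local product structure with uniform constants (Proposition \ref{localprodconst}) gives $d(v,z), d(z, \phi^\sigma w) \le C_0 d(v,w)$ for some $C_0 = C_0(\lambda, \Lambda, {\rm diam}(M))$. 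Converting these Sasaki distances to horospherical ones by Lemma \ref{horcompsas} yields $d_{ss}(v,z) \le C_0' d(v,w)$ and $d_{su}(z, \phi^\sigma w) \le C_0' d(v,w)$, with $C_0' = C_0'(\lambda, \Lambda, {\rm diam}(M))$.

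Next I would apply the triangle inequality
\[
d(\mathcal{F}_0(v), \mathcal{F}_0(w)) \le d(\mathcal{F}_0(v), \mathcal{F}_0(z)) + d(\mathcal{F}_0(z), \mathcal{F}_0(\phi^\sigma w)) + d(\mathcal{F}_0(\phi^\sigma w), \mathcal{F}_0(w)).
\]
The first term is bounded by the strong-stable case of Proposition \ref{suholderest}: $d(\mathcal{F}_0(v),\mathcal{F}_0(z)) \le C\, d_{ss}(v,z)^{A^{-1}\lambda/\Lambda} \le C (C_0' d(v,w))^{A^{-1}\lambda/\Lambda}$; the second term is bounded the same way via the strong-unstable case. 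For the third term, $\mathcal{F}_0$ maps the geodesic through $w$ to a geodesic in $\tilde N$, so by (\ref{F0}) we have $\mathcal{F}_0(\phi^\sigma w) = \psi^{b(\sigma,w)}\mathcal{F}_0(w)$ and hence $d(\mathcal{F}_0(\phi^\sigma w), \mathcal{F}_0(w)) \le |b(\sigma,w)|$; since orthogonal projection onto a geodesic is $1$-Lipschitz in nonpositive curvature and $f$ is $A$-Lipschitz (Corollary \ref{flipbound}), one gets $|b(\sigma,w)| \le A|\sigma| \le A\, d(v,w)$, exactly as in the first line of the proof of Lemma \ref{bbd}. Combining the three bounds, and using $d(v,w) < \delta_0 < 1$ together with $A^{-1}\lambda/\Lambda \le 1$ to absorb the linear term into the Hölder term, gives $d(\mathcal{F}_0(v), \mathcal{F}_0(w)) \le C\, d(v,w)^{A^{-1}\lambda/\Lambda}$.

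It then remains to fix $\delta_0$: it must lie below the thresholds appearing in Proposition \ref{localprodconst}, Lemma \ref{horcompsas}, and (after the scaling by $C_0'$) Proposition \ref{suholderest}, all of which depend only on $\lambda, \Lambda, {\rm diam}(M)$. Recalling from the remark after Proposition \ref{localprodconst} that ${\rm diam}(M)$ is bounded in terms of $n, \Gamma, \lambda, \Lambda, i_M$, the final constant $C$ (which also absorbs the constant $A$ of Proposition \ref{fpseudoisom}) depends only on $n, \Gamma, \lambda, \Lambda, i_M$, as claimed.

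The substantive analytic content has already been isolated in Propositions \ref{localprodconst} and \ref{suholderest}, so I expect the main difficulty here to be purely bookkeeping: making sure the passage between Sasaki and horospherical distances and the invocation of Proposition \ref{localprodconst} genuinely preserve the uniformity of the constants. The one place that requires a little care is the flow-direction error, which must be controlled by the \emph{direct} bound $|b(\sigma,w)| \le A|\sigma|$ rather than by the two-sided estimate of Lemma \ref{bbd}, whose additive constant $B'$ does not shrink as $\sigma \to 0$.
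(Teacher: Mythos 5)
Your proof is correct and takes essentially the same route as the paper: form the bracket point $[v,w]$ via the local product structure, bound the temporal displacement by Lemma \ref{temporalbd}, apply Proposition \ref{suholderest} (together with Lemma \ref{horcompsas} and Proposition \ref{localprodconst}) to the two strong leg pieces, handle the flow-direction increment by the one-sided Lipschitz bound $b(\sigma,w)\le A\sigma$, and conclude by the triangle inequality. Your explicit remark that the flow-direction error must be controlled by the direct $A|\sigma|$ estimate rather than the two-sided bound of Lemma \ref{bbd} (whose additive constant does not vanish as $\sigma\to 0$) is a useful clarification of a step the paper passes over silently.
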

 
\begin{proof}
By Lemma \ref{temporalbd}, 
we know that for any $v, w \in T^1M$ with $d(v, w) = \delta$, 
there is a time $\sigma = \sigma(v,w) \in [- \delta, \delta]$ 
and a point $[v, w] \in T^1 \tilde M$ so that
\[ [v,w] = W^{ss}(v) \cap W^{su}(\phi^{\sigma} w). \] 
Let $\alpha = A^{-1} \lambda /\Lambda$ be the exponent from Proposition \ref{suholderest}. 
Applying Proposition \ref{suholderest}, followed by Lemma \ref{horcompsas} and Proposition \ref{localprodconst}, and finally Lemma \ref{temporalbd}, we have
\[ d( \mathcal{F}_0(v), \mathcal{F}_0([v,w])) 
\leq C' d_{ss}(v, [v,w])^{\alpha} 
\leq C d(v, \phi^{\sigma} w)^{\alpha}
\leq C (2 d(v, w))^{\alpha} \]
for some constants $C$ and $C'$ depending only on $\lambda, \Lambda, {\rm diam}(M), A$.
By a similar argument,
 \[ d(\mathcal{F}_0([v,w]), \mathcal{F}_0(\phi^{\sigma} w)) \leq C d(v,w)^{\alpha}. \]
Finally, as in the beginning of the proof of Proposition \ref{suholderest}, we have
\[ d(\mathcal{F}_0(w), \mathcal{F}_0(\phi^{\sigma} w)) \leq A \delta. \]
Now, $d(\mathcal{F}_0(v), \mathcal{F}_0(w)) \leq C d(v,w)^{\alpha}$ follows from the triangle inequality. 
 \end{proof}

 \begin{lem}\label{timetexp}
 There is a constant $C = C(\lambda, \Lambda, t)$ so that $d(\phi^t v, \phi^t w) < C d(v, w)$ for all $d(v,w) \leq \delta_0$, where $\delta_0$ depends only on $\lambda$, $\Lambda$. 
 \end{lem}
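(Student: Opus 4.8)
The plan is to reduce the statement to a bound on how the geodesic flow distorts distances on the universal cover, and then to obtain that bound from the Jacobi field estimates in the pinched-negative-curvature setting. First I would pass to $T^1\tilde M$ and connect $v$ and $w$ by a short path. It suffices to treat the case where $v$ and $w$ lie on the same strong stable leaf, the same strong unstable leaf, or the same flow line, since for $d(v,w)<\delta_0$ small the local product structure (Proposition \ref{localprodconst}, together with Lemma \ref{temporalbd}) writes $w$ as a bounded composition of such moves with comparable sizes, and Lemma \ref{horcompsas} lets us pass freely between the Sasaki distance $d$ and the horospherical distances $d_{ss}$, $d_{su}$ up to constants depending only on $\lambda,\Lambda$. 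On a flow line the flow acts by a time shift composed with a bounded reparametrization, so $d(\phi^t v,\phi^t w)$ is controlled trivially. On a strong stable leaf the flow contracts, so $d_{ss}(\phi^t v,\phi^t w)\le d_{ss}(v,w)$ outright (using \cite[Proposition 4.1]{heintzehof}); the only case requiring real work is the strong unstable leaf, where the flow expands.

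For the unstable case I would invoke the exponential expansion estimate for horospherical distance under the geodesic flow from \cite[Proposition 4.1]{heintzehof}: for $w\in W^{su}(v)$ one has $d_{su}(\phi^t v,\phi^t w)\le e^{\Lambda t}\,d_{su}(v,w)$, at least while the distances stay below a fixed threshold. Combined with the previous paragraph, this gives $d(\phi^t v,\phi^t w)\le C e^{\Lambda t} d(v,w)$ with $C=C(\lambda,\Lambda)$. Taking $C(\lambda,\Lambda,t):=C e^{\Lambda t}$ (allowing the constant to depend on $t$, as the statement permits) finishes the bound for all $w$ with $d(v,w)\le\delta_0$, where $\delta_0$ is chosen small enough (depending only on $\lambda,\Lambda,\mathrm{diam}(M)$) for Proposition \ref{localprodconst}, Lemma \ref{horcompsas}, and the threshold in \cite[Proposition 4.1]{heintzehof} all to apply.

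I expect the main obstacle to be bookkeeping the constants through the local product decomposition: when I split $w=[v,w]$ with $[v,w]=W^{ss}(v)\cap W^{su}(\phi^\sigma w)$ and $|\sigma|<d(v,w)$, I need that $d_{ss}(v,[v,w])$ and $d_{su}([v,w],\phi^\sigma w)$ are each $\le C_0\,d(v,w)$ with $C_0$ depending only on $\lambda,\Lambda,\mathrm{diam}(M)$ — which is exactly Proposition \ref{localprodconst} combined with Lemma \ref{horcompsas} — and then that applying $\phi^t$ to each piece and reassembling via the triangle inequality costs only another universal factor. A minor subtlety is that after flowing, the three pieces are still within a Sasaki ball of radius comparable to $C e^{\Lambda t} d(v,w)$, so one should shrink $\delta_0$ (depending on $t$ is fine) to keep everything within the ranges where all the cited comparison estimates hold; this is routine once the expansion bound is in hand.
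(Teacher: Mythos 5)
Your proposal is correct and follows essentially the same route as the paper: decompose via the local product structure $[v,w]=W^{ss}(v)\cap W^{su}(\phi^{\sigma(v,w)}w)$, note that the flow-direction piece is isometric and the stable piece contracts, and control the unstable piece by the exponential expansion bound $e^{\Lambda t}$ from \cite[Proposition 4.1]{heintzehof}, converting between Sasaki and horospherical distances via Lemma~\ref{horcompsas} and Proposition~\ref{localprodconst}. The resulting $t$-dependent constant $Ce^{\Lambda t}$ is exactly what the paper obtains.
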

 
\begin{proof}
As before, consider $[v,w] = W^{ss}(v) \cap W^{su}(\phi^{\sigma(v,w)} w)$. 
The distance between $w$ and $\phi^{\sigma(v,w)}$ remains constant under application of $\phi^t$, and since $v$ and $[v,w]$ are on the same stable leaf, their distance contracts under application of $\phi^t$.
Finally, since $[v,w]$ and $\phi^{\sigma(v,w)} w$ are on the same strong unstable leaf, \cite[Proposition 4.1]{heintzehof}, Lemma \ref{horcompsas} and Proposition \ref{localprodconst}  imply
\[ d(\phi^t [v,w], \phi^t \phi^{\sigma(v,w)} w) \leq e^{\Lambda t} d_{su}([v,w], \phi^{\sigma(v,w)} w) \leq e^{\Lambda t} C d(v,w) \]
for some constant $C$ depending only on $\lambda$, $\Lambda$, ${\rm diam}(M)$. 
\end{proof}

\begin{lem}\label{buniholder}
Let $C$ denote the constant in Proposition \ref{F0holder}, and let $\alpha = A^{-1} \lambda /\Lambda$ denote the H{\"o}lder exponent. 
Then there is a constant $C_1 = C_1(C, t)$ so that 
\[ |b(t,v) - b(t,w)| \leq C_1 d(v,w)^{\alpha}.  \]
\end{lem}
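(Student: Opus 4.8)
The plan is to unwind the definition of the cocycle $b(t,v)$ from (\ref{F0}) as a distance along a geodesic in $\tilde N$, and then leverage the H\"older estimate for $\mathcal{F}_0$ already established in Proposition \ref{F0holder}. Recall that if $v$ is tangent to a geodesic $\eta$ in $\tilde M$ with footpoint $p$, then $\mathcal{F}_0(v)$ is the tangent vector to $\zeta = \overline{f}(\eta)$ at $P_\zeta(f(p))$, and $b(t,v)$ is precisely the signed arclength along $\zeta$ between the footpoints of $\mathcal{F}_0(v)$ and $\mathcal{F}_0(\phi^t v)$; equivalently $b(t,v) = d\bigl(P_\zeta(f(p)), P_\zeta(f(\phi^t p))\bigr)$ with an appropriate sign. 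So $b(t,v)$ measures how far $\mathcal{F}_0(v)$ and $\mathcal{F}_0(\phi^t v)$ are separated along their common orbit in $T^1\tilde N$.

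First I would reduce the claim to controlling $d(\mathcal{F}_0(v),\mathcal{F}_0(w))$ and $d(\mathcal{F}_0(\phi^t v),\mathcal{F}_0(\phi^t w))$. Write $\mathcal{F}_0(\phi^t v) = \psi^{b(t,v)}\mathcal{F}_0(v)$ and $\mathcal{F}_0(\phi^t w) = \psi^{b(t,w)}\mathcal{F}_0(w)$. When $v$ and $w$ are close, $\mathcal{F}_0(v)$ and $\mathcal{F}_0(w)$ are $C\,d(v,w)^\alpha$-close by Proposition \ref{F0holder}, and likewise $\mathcal{F}_0(\phi^t v)$ and $\mathcal{F}_0(\phi^t w)$ are $C\,d(\phi^t v,\phi^t w)^\alpha$-close, which by Lemma \ref{timetexp} is at most $C_1 d(v,w)^\alpha$ for a constant $C_1 = C_1(\lambda,\Lambda,t)$. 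Now the two flow lines of $\psi^s$ through $\mathcal{F}_0(v)$ and $\mathcal{F}_0(w)$ start $O(d(v,w)^\alpha)$-close and, after flowing by times $b(t,v)$ and $b(t,w)$ respectively, end $O(d(v,w)^\alpha)$-close. Applying Lemma \ref{timesclose} (with the roles of $v,w$ there played by $\mathcal{F}_0(v),\mathcal{F}_0(w)$, and $s = b(t,v)$, $t = b(t,w)$) gives directly
\[
|b(t,v) - b(t,w)| \leq d(\mathcal{F}_0(v),\mathcal{F}_0(w)) + d(\mathcal{F}_0(\phi^t v),\mathcal{F}_0(\phi^t w)) \leq C_1 d(v,w)^\alpha,
\]
after absorbing constants and using that $d(v,w) < \delta_0$ ensures all the relevant pairs of vectors are within the range where the earlier lemmas apply.

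The main subtlety — and the step I would be most careful about — is checking that Lemma \ref{timesclose} genuinely applies here: it is stated for the geodesic flow on $T^1\tilde M$, so I need to invoke its exact analogue for $\psi^t$ on $T^1\tilde N$, which holds by the same proof since $N$ is also negatively curved and the argument only used the globally-defined local product structure and Lemma \ref{temporalbd}. One also needs $\mathcal{F}_0(v)$ and $\mathcal{F}_0(w)$ (resp. their time-$t$ images) to be close enough in the Sasaki metric on $T^1\tilde N$, not merely their footpoints along the geodesics; but since $\mathcal{F}_0$ sends geodesics to geodesics and the tangent-vector components are determined by the endpoints, the $C^0$-H\"older bound of Proposition \ref{F0holder} already controls the full Sasaki distance. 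Once these points are verified, the conclusion is immediate with $C_1$ depending only on $C$ (the constant of Proposition \ref{F0holder}) and $t$, as claimed.
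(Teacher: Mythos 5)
Your proof is correct and takes essentially the same route as the paper: apply Proposition \ref{F0holder} to $(v,w)$ and to $(\phi^t v,\phi^t w)$, control $d(\phi^t v,\phi^t w)$ via Lemma \ref{timetexp}, and conclude with (the analogue on $T^1\tilde N$ of) Lemma \ref{timesclose}. Your explicit observation that Lemma \ref{timesclose} must be re-read as a statement about $\psi^t$ on $T^1\tilde N$ is a point the paper leaves implicit, and it is indeed justified by the same argument since $N$ is negatively curved.
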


\begin{proof}
By Proposition \ref{F0holder}, we have $d(\mathcal{F}_0(v), \mathcal{F}_0(w)) \leq C d(v,w)^{\alpha}$ and $d(\mathcal{F}_0(\phi^t v), \mathcal{F}_0(\phi^t w)) \leq C d(\phi^t v,\phi^t w)^{\alpha}$.
Applying Lemma \ref{timetexp} shows $d(\mathcal{F}_0(\phi^t v), \mathcal{F}_0(\phi^t w)) \leq C_1 d(v,w)^{\alpha}$, where $C_1$ depends on $C$ and $t$. 
The desired result now follows from Lemma \ref{timesclose}. 
\end{proof}

\begin{proof}[Proof of Proposition \ref{holder}]
We want to find a H{\"o}lder estimate for $\mathcal{F}_l(v) = \psi^{a_l(0,v)} \mathcal{F}_0(v)$,
where $a_l(0, v) = \frac{1}{l} \int_0^l b(t,v) \, dt$. 
By the triangle inequality,
\[ d(\mathcal{F}_l(v), \mathcal{F}_l(w)) \leq d(\psi^{a_l(0,v)} \mathcal{F}_0(v),\psi^{a_l(0,v)} \mathcal{F}_0(w)) + d(\psi^{a_l(0,v)} \mathcal{F}_0(w),\psi^{a_l(0,w)} \mathcal{F}_0(w)).\]

To bound the first term, note that for all $t \in [0,l]$ we have $b(t,v) \leq At \leq Al$ by Lemma \ref{bbd}. Hence the average $a_l(0,v)$ is bounded above by $Al$.  
By Lemma \ref{timetexp} and Proposition \ref{F0holder}, we have
\[ d(\psi^{a_l(0,v)} \mathcal{F}_0(v),\psi^{a_l(0,v)} \mathcal{F}_0(w)) \leq C d(\mathcal{F}_0(v), \mathcal{F}_0(w)) \leq C d(v, w)^{\alpha}, \]
where $C$ depends only on $l$ and the constant from Proposition \ref{F0holder}. As such, $C$ depends only on $\lambda$, $\Lambda$, $A$, $B$. 
By Lemma \ref{buniholder}, the second term is bounded above by 
\[ |a_l(0,v) - a_l(0, w)| \leq \frac{1}{l} \int_0^l |b(t,v) - b(t,w)| \, dt \leq C d(v,w)^{\alpha}, \]
where $C$ again depends only on $\lambda$, $\Lambda$, $A$, $B$.
\end{proof}

\bibliographystyle{amsalpha}
\bibliography{quantMLSfinite07-2025}

\end{document}